 \newtheorem{thm}{Theorem}
 \newtheorem{cor}[thm]{Corollary}
 \newtheorem{lem}[thm]{Lemma}
 \theoremstyle{definition}
 \newtheorem{defn}[thm]{Definition}
 \theoremstyle{definition}
 \newtheorem{notn}[thm]{Notation}
 \theoremstyle{remark}
 \newtheorem{rem}[thm]{Remark}
 \theoremstyle{definition}
 \newtheorem{example}[thm]{Example}
 \numberwithin{thm}{section}
 \numberwithin{equation}{section}
 \newcommand{\Hom}{\mathrm{Hom}}
 \newcommand{\Spec}{\mathrm{Spec}}
 \newcommand{\End}{\mathrm{End}}
 \newcommand{\Pic}{\mathrm{Pic}}
 \newcommand{\Vol}{\mathrm{Vol}}
 \newcommand{\GL}{\mathrm{GL}}
 \newcommand{\PGL}{\mathrm{PGL}}
 \newcommand{\SL}{\mathrm{SL}}
 \newcommand{\rank}{\mathrm{rank}}
 \newcommand{\Tr}{\mathrm{Tr}}
 \renewcommand{\mod}{\mathrm{mod}}
\newcommand{\sT}{\mathscr{T}}
 \newcommand{\fp}{\mathfrak p}
 \newcommand{\fn}{\mathfrak n}
 \newcommand{\fl}{\mathfrak l}
 \newcommand{\fm}{\mathfrak m}
 \newcommand{\fE}{\mathfrak E}
 \newcommand{\fD}{\mathfrak D}
 \newcommand{\cO}{\mathcal{O}}
 \newcommand{\cG}{\mathcal{G}}
 \newcommand{\cX}{\mathcal{X}}
 \newcommand{\cJ}{\mathcal{J}}
 \newcommand{\cI}{\mathcal{I}}
 \renewcommand{\cH}{\mathcal{H}}
 \newcommand{\R}{\mathbb{R}}
 \newcommand{\C}{\mathbb{C}}
 \newcommand{\F}{\mathbb{F}}
 \newcommand{\Q}{\mathbb{Q}}
 \newcommand{\Z}{\mathbb{Z}}
 \newcommand{\p}{\mathbb{P}}
 \newcommand{\T}{\mathbb{T}}
 \newcommand{\eps}{\varepsilon}
 \newcommand{\To}{\longrightarrow}
 \newcommand{\bs}{\setminus}
 \newcommand{\Fi}{F_\infty}
 \newcommand{\G}{\Gamma}
 \newcommand{\La}{\Lambda}
 \newcommand{\la}{\lambda}
 \newcommand{\norm}[1]{|\!|#1|\!|}
\begin{document}

\title[Graph Laplacians and Drinfeld modular curves]{Graph Laplacians, component groups and \\ Drinfeld modular curves}

\author{Mihran Papikian}
\address{Department of Mathematics, Pennsylvania State University, University Park, PA 16802, U.S.A.}
\email{papikian@psu.edu}

\thanks{The author's research was partially supported by grants from the Simons Foundation (245676) and the National Security Agency 
(H98230-15-1-0008).} 

\subjclass[2010]{11G18, 05C25,  11F12} 
\keywords{Graph Laplacian; Component group; Drinfeld modular curve; Hecke algebra; Eisenstein ideal.}

%\date{\today}

% ----------------------------------------------------------------------

\begin{abstract} Let $\fp$ be a prime ideal of $\F_q[T]$. Let $J_0(\fp)$ 
be the Jacobian variety of the Drinfeld modular curve $X_0(\fp)$. Let $\Phi$ 
be the component group of $J_0(\fp)$ at the place $1/T$. We use graph Laplacians to 
estimate the order of $\Phi$ as $\deg(\fp)$ goes to infinity.  
This estimate implies that $\Phi$ is not annihilated by the Eisenstein ideal of the Hecke algebra 
$\T(\fp)$ acting on $J_0(\fp)$ once the degree of $\fp$ is large enough. We also obtain an asymptotic formula for the size of the discriminant 
of $\T(\fp)$ by relating this discriminant to the order of $\Phi$; in this problem the order of $\Phi$
plays a role similar to the Faltings height of classical modular Jacobians. 
Finally, we bound the spectrum of the adjacency operator of a finite subgraph of an infinite  
diagram in terms of the spectrum of the adjacency operator of the diagram itself; this 
result has applications to the gonality of Drinfeld modular curves. 
\end{abstract}

% ----------------------------------------------------------------------

\maketitle

\tableofcontents

% ----------------------------------------------------------------------

\section{Introduction}  

Let $\F_q$ be a finite field with $q$ 
elements, where $q$ is a power of a prime number $p$. Let $A=\F_q[T]$ 
be the ring of polynomials in indeterminate $T$ with coefficients 
in $\F_q$, and $F=\F_q(T)$ be the rational function field. 
The degree map $\deg: F\to \Z\cup \{-\infty\}$, which assigns 
to a non-zero polynomial its degree in $T$ and $\deg(0)=-\infty$, defines 
a norm on $F$ by $|a|:=q^{\deg(a)}$. The corresponding place of $F$ 
is usually called the \textit{place at infinity}, and is denoted by $\infty$. %; it plays a role similar to the archimedean place of $\Q$. 
Note that $1/T$ is a uniformizer at $\infty$. 
The order of a finite set $S$ will be denoted by $|S|$. 
We define norm and degree for a non-zero ideal $\fn$ of $A$ by $|\fn|:=|A/\fn|$ and $\deg(\fn):=\log_q|\fn|$. 
The prime ideals $\fp\lhd A$ always will be assumed to be non-zero. 

Let $\Fi$ be the completion of $F$ at $\infty$, and $\C_\infty$ be the completion of an algebraic closure of $\Fi$. 
The \textit{Drinfeld upper half-plane} $\Omega:=\C_\infty-\Fi$
has a natural structure of a rigid-analytic space over $\Fi$; cf. \cite{Drinfeld}, \cite{GR}. 
Let $\fn\lhd A$ be a non-zero ideal. The level-$\fn$ \textit{Hecke congruence subgroup} of $\GL_2(A)$ is 
$$
\G_0(\fn):=\left\{\begin{pmatrix} a & b \\  c & d\end{pmatrix}\in \GL_2(A)\ \bigg|\ c\equiv 0\ \mod\  \fn \right\}.   
$$
The group $\G_0(\fn)$ acts on $\Omega$ via linear fractional transformations. 
Drinfeld proved that the quotient $\G_0(\fn)\bs \Omega$ 
is the space of $\C_\infty$-points of an affine curve $Y_0(\fn)$ defined over $F$,  
which is a coarse moduli scheme for rank-$2$ Drinfeld $A$-modules with $\G_0(\fn)$-level structures; cf. \cite{Drinfeld}, \cite{GR}.
Let $X_0(\fn)$ be the unique smooth projective curve over $F$ containing $Y_0(\fn)$. 
The curve $X_0(\fn)$ is geometrically irreducible. 
Let $J_0(\fn)$ be the Jacobian variety of $X_0(\fn)$. 

The analogy between $X_0(\fn)$ 
and the classical modular curves $X_0(N)$ over $\Q$ classifying elliptic curves with 
$\G_0(N)$-structures is well-known and has been extensively studied over the last 40 years. 
From this perspective $\infty$ plays a role similar to the archimedean place of $\Q$, and 
$\Omega$ plays the role of the Poincar\'e upper half-plane. In this paper we study a certain group 
associated to $J_0(\fn)$, the component group at $\infty$, for which there is no direct classical analogue. 

For a place $v$ of $F$, let $\Phi_{J_0(\fn), v}$ denote the 
group of connected components of the N\'eron model of $J_0(\fn)$ at $v$. 
Apart from $\infty$, the places of $F$ are in bijection with the non-zero prime ideals of $A$. 
It is known that $J_0(\fn)$ has bad reduction only at $v$ dividing $\fn$ and at $\infty$, so $\Phi_{J_0(\fn), v}$ is non-trivial 
only if $v|\fn$ or $v=\infty$. By a theorem of Raynaud, the group structure of $\Phi_{J_0(\fn), v}$ can be 
deduced from the structure of the special fibre of the minimal regular model of $X_0(\fn)$ over $v$. 
If $v\neq \infty$, the structure of the minimal regular model %of $X_0(\fn)$ over $v$  
itself can be deduced from the moduli interpretation of $X_0(\fn)$. 
For example, if $v\parallel \fn$ (i.e., $v$ divides $\fn$ but $\fn/v$ is coprime to $v$), 
the structure of $\Phi_{J_0(\fn), v}$ as an abelian group is given in \cite[Thm. 5.3]{PW}; see also \cite{Uber}. 
One consequence of this description is that for $v\parallel \fn$ the order of $\Phi_{J_0(\fn), v}$ grows linearly with $|\fn|$. 
For example, if $\fp\lhd A$ is prime, then $\Phi_{J_0(\fp), \fp}$ is a cyclic 
group of order 
$$
N(\fp)=\begin{cases}
\frac{|\fp|-1}{q-1}, & \text{if $\deg(\fp)$ is odd};\\
\frac{|\fp|-1}{q^2-1}, & \text{if $\deg(\fp)$ is even.}
\end{cases}
$$

In contrast, the group $\Phi_{J_0(\fn), \infty}$ seems to be a much more complicated object, 
and no general formulas for its order are known (even for prime $\fn$). 
The central result of this paper is an estimate on the order of $\Phi_{J_0(\fn), \infty}$.  

\begin{comment}
Some results about this group were obtained by Gekeler in \cite{Analytical} and 
\cite{GekelerCDG}. In particular, in \cite[(5.3.3)]{GekelerCDG} one finds the following example, 
which shows that $\Phi_{J_0(\fp), \infty}$ depends on specific $\fp$, and not just on $|\fp|$:
\begin{example}
Assume $q=2$. Let $\fp_1=T^4+T^3+1$ and $\fp_2=T^4+T+1$, both of which are prime. 
Then $\Phi_{J_0(\fp_1), \infty}\cong \Z/2\Z\times \Z/80\Z$ and $\Phi_{J_0(\fp_2), \infty}\cong \Z/45\Z$. 
\end{example}
\end{comment}

\begin{notn}
Let $f(x)$ and $g(x)$ be positive real valued functions defined on $\Z_{>0}$, or ideals of 
$A$, or prime ideals of $A$. We write $f(x)=O(g(x))$ when there is a constant $C$ such that 
$f(x)\leq C g(x)$ for all values of $x$ under consideration.  
We write $f(x)\sim g(x)$ when $\lim_{|x|\to \infty}f(x)/g(x)= 1$, and $f(x)=o(g(x))$ when $\lim_{|x|\to \infty}f(x)/g(x)= 0$. 
\end{notn}

\begin{thm}\label{thmMain1} 
Let $\fp\lhd A$ be a prime ideal. We have
$$
\ln \left|\Phi_{J_0(\fp), \infty}\right|  \sim c(q)|\fp|,
$$
where $c(q)$ is an explicit positive constant depending only on $q$. This constant can be estimated as 
$$
c(q)=\frac{2 \ln\left(q+\frac{1}{2}\right)}{(q-1)^2(q+1)} + O(q^{-5}\ln q). 
$$
\end{thm}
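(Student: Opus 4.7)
The plan is to exploit that $X_0(\fp)$ has split multiplicative reduction at $\infty$ and is a Mumford curve over $\Fi$, so that $\Phi_{J_0(\fp),\infty}$ can be read off from the combinatorics of an explicit finite graph; the theorem then follows from a tree-entropy estimate for that graph.

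\textbf{Step 1: Reduction to a graph-theoretic quantity.} By the Gekeler--Reversat analytic uniformization \cite{GR}, combined with Grothendieck's description of the N\'eron model of a Jacobian with toric reduction, one has a canonical isomorphism
$$\Phi_{J_0(\fp),\infty}\;\cong\;\coker\bigl(M\colon H_1(\cG(\fp),\Z)\to H_1(\cG(\fp),\Z)^{\vee}\bigr),$$
where $\cG(\fp)$ is the finite reduction graph extracted from the quotient $\G_0(\fp)\bs\sT$ of the Bruhat--Tits tree $\sT$ of $\PGL_2(\Fi)$ by $\G_0(\fp)$ (with the two cuspidal rays truncated, and with weights coming from vertex/edge stabilizer orders), and $M$ is the monodromy pairing, which in this setting coincides with the weighted Laplacian pairing on cycles. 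The matrix--tree theorem then gives $|\Phi_{J_0(\fp),\infty}|=\tau(\cG(\fp))$ up to a correction factor of size $|\fp|^{O(1)}$, so after taking logarithms the theorem reduces to proving $\ln\tau(\cG(\fp))\sim c(q)|\fp|$.

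\textbf{Step 2: Structure of $\cG(\fp)$ and tree-entropy asymptotic.} Using the mass formula for $\G_0(\fp)\bs\sT$, I would check that, aside from the two truncated cuspidal rays and $O(\sqrt{|\fp|})$ ``elliptic'' vertices with nontrivial stabilizer, $\cG(\fp)$ is a $(q+1)$-regular multigraph on $N(\fp)\sim\tfrac{2|\fp|}{(q-1)^2(q+1)}$ vertices --- consistent with $(q+1)$-regularity together with $\mathrm{rank}\,H_1(\cG(\fp),\Z)=g(X_0(\fp))\sim|\fp|/(q^2-1)$, and already explaining the denominator $(q-1)^2(q+1)$ in the expected $c(q)$. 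Since the exceptional vertices have density $o(1)$, the graphs $\cG(\fp)$ Benjamini--Schramm converge to the $(q+1)$-regular tree, and the Lyons--McKay tree-entropy theorem then gives
$$\frac{\ln\tau(\cG(\fp))}{N(\fp)}\To h_{q+1}:=\int_{-2\sqrt{q}}^{2\sqrt{q}}\ln(q+1-\la)\,d\mu_{q+1}(\la),$$
where $\mu_{q+1}$ is the Kesten--McKay spectral measure on $\sT$. This yields $c(q)=\tfrac{2}{(q-1)^2(q+1)}\,h_{q+1}$, and expanding the integrand via $q+1-2\sqrt{q}\cos\theta=|\sqrt{q}-e^{i\theta}|^2$ together with the known even moments $\int\la^{2n}\,d\mu_{q+1}$ (counting closed walks in $\sT$) produces $h_{q+1}=\ln(q+1/2)+O(q^{-2}\ln q)$, giving the refined form of $c(q)$.

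\textbf{Main obstacle.} The chief difficulty is to upgrade the Lyons--McKay convergence, which is a priori only a qualitative limit statement (and which in its standard form only gives convergence along subsequences for which the empirical spectral measures converge), into the effective equivalence $\sim$ required by the theorem, with remainder $o(|\fp|)$. A natural way to achieve this is to bypass the soft spectral argument entirely and evaluate $\ln\tau(\cG(\fp))$ through the Ihara zeta function $Z_{\cG(\fp)}(u)$, whose logarithmic derivative at $u=1$ encodes $\ln\tau(\cG(\fp))$; the non-trivial zeros of $Z_{\cG(\fp)}(u)$ are controlled by the Hecke eigenvalues on $H^1(X_0(\fp))$, and the Drinfeld--Deligne Ramanujan bound $|\la|\le 2\sqrt{q}$ away from the Eisenstein part then supplies the error control needed to replace the limit by a genuine asymptotic.
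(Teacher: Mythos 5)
Your skeleton is the same as the paper's: reduce $|\Phi_{J_0(\fp),\infty}|$ to the discriminant of the weighted cycle pairing on the finite dual graph $\cG_0(\fp)$ at $\infty$ (Theorems \ref{thmGroth} and \ref{thmDMC}), convert that to a product of nonzero Laplacian eigenvalues up to weight corrections whose logarithm is $o(|\fp|)$ (this is exactly what Theorem \ref{thmMF} supplies; your appeal to "matrix--tree up to $|\fp|^{O(1)}$" is acceptable in spirit but needs such a weighted statement), and identify the limit constant as the logarithmic moment of the Kesten--McKay measure, which is the paper's $C_q$ and expands as $\ln(q+\tfrac12)+O(q^{-2}\ln q)$. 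The genuine gap is in how you obtain the spectral asymptotic. Your claim that $\cG_0(\fp)$ Benjamini--Schramm converges to the $(q+1)$-regular tree "since the exceptional vertices have density $o(1)$" is a non sequitur: sparsity of the weighted/boundary vertices says nothing about the density of short cycles, and controlling short cycles (equivalently, the traces $\mathrm{tr}\,\delta'^k$ for fixed $k$) is precisely the hard arithmetic content. The paper gets this from Nagoshi's vertical Sato--Tate equidistribution theorem (Serre's Eichler--Selberg trace-formula method over function fields), quoted in \S\ref{ssEE}, together with two pieces of bookkeeping you also omit: the spectrum of the finite subgraph is \emph{not} contained in the spectrum of the diagram (Lemma \ref{lemCuspform} only gives one inclusion, and Example \ref{exampled3} shows it can fail badly), so one needs Theorem \ref{thmWeylHilb} and Weyl interlacing plus the Harder--Li--Weisinger dimension formula to show only $O(\deg\fp)$ eigenvalues of $\delta'$ are unaccounted for by cusp forms.

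Your proposed rescue --- Ihara zeta function plus the Ramanujan bound --- does not close this gap. The Ramanujan bound only confines the nontrivial eigenvalues to $[-2\sqrt q,2\sqrt q]$; by itself it yields $2\ln(\sqrt q-1)\,n(\fp)+o(|\fp|)\le \ln\fD_{\cG_0(\fp),w}\le 2\ln(\sqrt q+1)\,n(\fp)+o(|\fp|)$, i.e.\ the correct order of magnitude but not an asymptotic with the specific constant $c(q)$: pinning down $c(q)$ requires knowing the limiting \emph{distribution} of the eigenvalues inside that interval, which is an averaged trace-formula statement, not a pointwise bound, and no amount of manipulation of $Z_{\cG_0(\fp)}(u)$ at $u=1$ substitutes for it. Moreover, the eigenvalues entering here are those of the adjacency (Hecke) operator at $\infty$ acting on automorphic forms that are \emph{spherical} at $\infty$ with conductor dividing $\fp$, not the Hecke eigenvalues on $H^1$ of the Drinfeld modular curve, whose associated representations are special at $\infty$ (see Remark 4.2 following Theorem \ref{thmRDD}); so "the zeros of the Ihara zeta are controlled by the Hecke eigenvalues on $H^1(X_0(\fp))$" is incorrect as stated, even though Drinfeld's Ramanujan--Petersson bound does apply to the correct family. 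To complete your argument you would need to import the equidistribution input (or an effective short-cycle count of equivalent strength), at which point you recover essentially the paper's proof.
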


The restriction on $\fp$ being prime is made 
only for expository reasons. In fact, the methods that we develop for proving this theorem 
apply to any congruence subgroup $\G$ of $\GL_2(A)$, and show that the order of the component group at $\infty$ 
of the corresponding Drinfeld modular Jacobian can be estimated in a similar manner with $|\fp|$ replaced by $[\GL_2(A):\G]$. 
In particular, the orders of component groups grow exponentially with $[\GL_2(A):\G]$. 

\vspace{0.1in}

Theorem \ref{thmMain1} has two interesting applications to the Hecke algebra acting on $J_0(\fp)$. 
Let $\T(\fn)\subset \End(J_0(\fn))$ 
be the $\Z$-subalgebra of the endomorphisms of $J_0(\fn)$ generated by the Hecke operators $T_\fm$,  
$\fm\lhd A$, acting as correspondences on $X_0(\fn)$. 
The \textit{Eisentein ideal} $\fE(\fn)$ of $\T(\fn)$ is the ideal generated by the elements 
$$
\left\{T_\fl-|\fl|-1\ \big|\ \fl \text{ is prime}, \fl\nmid \fn\right\}.
$$
 
 It is well-known that the component groups of classical modular Jacobians $J_0(N)$ are Eisenstein, i.e., 
 are annihilated by $T_\ell-\ell-1$ for all prime $\ell$ not dividing $N$. 
This was proved by Ribet in the semistable reduction case \cite{RibetCGSS}, and 
by Edixhoven in general \cite{EdixhovenECG}. 
It is more-or-less clear that the arguments in \cite{RibetCGSS} and 
\cite{EdixhovenECG} can be transferred to the function fields setting (although this is not in published literature), so 
it is very likely that the component groups of Drinfeld modular Jacobians $J_0(\fn)$ at $v\mid \fn$ are Eisenstein. 
In any case, the fact that $\Phi_{J_0(\fp), \fp}$ is Eisenstein follows from the results in \cite{Uber}. In \cite[Thm. 8.9]{PW2}, 
it is shown that the $\T(\fp)$-submodule of $\Phi_{J_0(\fp), \infty}$ annihilated by $\fE(\fp)$ is isomorphic to $\T(\fp)/\fE(\fp)\cong \Z/N(\fp)\Z$. 
Comparing this with the estimate in Theorem \ref{thmMain1}, we see that 
\begin{thm} The component group 
$\Phi_{J_0(\fp), \infty}$ is not Eisenstein if $\deg(\fp)$ is large enough. 
\end{thm}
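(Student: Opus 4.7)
The plan is a direct size comparison. By \cite[Thm. 8.9]{PW2}, the $\T(\fp)$-submodule of $\Phi_{J_0(\fp),\infty}[\fE(\fp)]$ annihilated by the Eisenstein ideal is already isomorphic to $\T(\fp)/\fE(\fp)\cong \Z/N(\fp)\Z$. Thus if $\fE(\fp)$ annihilated the \emph{entire} component group $\Phi_{J_0(\fp),\infty}$, we would have
$$
\left|\Phi_{J_0(\fp),\infty}\right| \;=\; \left|\Phi_{J_0(\fp),\infty}[\fE(\fp)]\right| \;=\; N(\fp).
$$
So it suffices to show that $|\Phi_{J_0(\fp),\infty}|$ eventually exceeds $N(\fp)$.

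For this I would simply insert the two asymptotic estimates we have on hand. From the explicit formula for $N(\fp)$ recalled in the introduction, $N(\fp) \leq (|\fp|-1)/(q-1)$, so
$$
\ln N(\fp) \;\leq\; \ln|\fp| - \ln(q-1) \;=\; O(\deg(\fp)).
$$
On the other hand, Theorem \ref{thmMain1} gives
$$
\ln\left|\Phi_{J_0(\fp),\infty}\right| \;\sim\; c(q)|\fp| \;=\; c(q) q^{\deg(\fp)},
$$
with $c(q)>0$. Since $c(q)q^{\deg(\fp)}$ grows exponentially in $\deg(\fp)$ while $\ln N(\fp)$ grows at most linearly, there exists a bound $d_0=d_0(q)$ such that $c(q)|\fp|/2 > \ln N(\fp)$ whenever $\deg(\fp)\geq d_0$, and in particular $|\Phi_{J_0(\fp),\infty}| > N(\fp)$. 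For such $\fp$, the previous paragraph's equality fails, so $\Phi_{J_0(\fp),\infty}$ is not annihilated by $\fE(\fp)$, i.e., is not Eisenstein.

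There is no real obstacle here beyond invoking the two ingredients correctly; the only thing to be mindful of is that \cite[Thm. 8.9]{PW2} provides the identification of the $\fE(\fp)$-torsion subgroup (not merely a bound on it), which is what allows us to convert the hypothesis ``Eisenstein'' into the \emph{equality} $|\Phi_{J_0(\fp),\infty}| = N(\fp)$ rather than an inequality in the wrong direction. Once that is in place, the comparison of the doubly exponential growth of $|\Phi_{J_0(\fp),\infty}|$ (in $\deg(\fp)$) against the singly exponential growth of $N(\fp)$ yields the result immediately.
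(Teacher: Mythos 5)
Your proposal is correct and is essentially the paper's own argument: the paper also combines \cite[Thm.~8.9]{PW2} (identifying the $\fE(\fp)$-torsion of $\Phi_{J_0(\fp),\infty}$ with $\Z/N(\fp)\Z$) with the estimate $\ln|\Phi_{J_0(\fp),\infty}|\sim c(q)|\fp|$ of Theorem \ref{thmMain1} to conclude that the full component group is eventually too large to be killed by $\fE(\fp)$. Your write-up merely makes the size comparison explicit, which matches the intended proof.
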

\begin{rem} 
Interestingly, even the groups of connected components 
of the real points $J_0(N)(\R)$ of classical modular Jacobians are Eisenstein, as was shown by Merel \cite{Merel}. 
\end{rem}

Let $N$ be a square-free integer. The discriminant $\fD_{\T(N)}$ of the Hecke algebra $\T(N)$ acting on the 
classical modular Jacobian $J_0(N)$ measures congruences between weight-$2$ cusp forms 
on $\G_0(N)$. In \cite{Ullmo}, Ullmo obtained the following bounds:
\begin{equation}\label{UllmoBound}
g(N)\ln N + o(g(N)\ln N) \leq  \ln \fD_{\T(N)} \leq 2g(N)\ln N+o(g(N)\ln N),
\end{equation}
where $g(N)$ is the genus of $X_0(N)$. To prove this he first showed that $\fD_{\T(N)}$ is related 
to the Faltings height of $J_0(N)$. The lower bound in \eqref{UllmoBound} then follows from 
a general lower bound on the heights of abelian varieties over number fields due to Bost. In the reverse direction, 
the upper bound on $\fD_{\T(N)}$ gives an upper bound on the height of $J_0(N)$. 

Now let $\fp\lhd A$ be a prime ideal. Denote by $g(\fp)$ the genus of $X_0(\fp)$. It is known that $g(\fp)\sim |\fp|/(q^2-1)$; 
see $\S$\ref{ssDMC} for an explicit formula. Let $H(J_0(\fp))$ be the height of $J_0(\fp)$; see 
$\S$\ref{ssJRC} for the definition. Let $\fD_{\T(\fp)}$ be the discriminant of the Hecke algebra $\T(\fp)$; see \eqref{deffDT} 
for the definition. Using the results of Szpiro \cite{Szpiro}, 
it is not particularly hard to prove the following bounds on the height (Theorem \ref{lastTheorem})
$$
\frac{g(\fp)\deg(\fp)}{12}+o(g(\fp)\deg(\fp))\leq  H(J_0(\fp))\leq \frac{g(\fp)^2\deg(\fp)}{3}+o(g(\fp)^2\deg(\fp)). 
$$
On the other hand, the discriminant $\fD_{\T(\fp)}$ does not seem 
to be directly related to $H(J_0(\fp))$; the height is defined in terms of differential forms on $J_0(\fp)$, 
which correspond to $\C_\infty$-valued Drinfeld modular forms, whereas $\fD_{\T(\fp)}$ measures 
congruences between $\C$-valued automorphic forms on $\G_0(\fp)$. Nevertheless, we show 
that a crucial part of Ullmo's argument does go through with $\left|\Phi_{J_0(\fp), \infty}\right|$ playing the role 
of the height. This gives a formula relating $\left|\Phi_{J_0(\fp), \infty}\right|$ and $\fD_{\T(\fp)}$; see Theorem \ref{thmfDT}. Using this formula and 
Theorem \ref{thmMain1}, we obtain in $\S$\ref{ssDMC} the following: 

\begin{thm}\label{thmMain2} Let $\fp\lhd A$ be prime. Then 
$$
2g(\fp)\deg(\fp)+o(g(\fp)\deg(\fp))\leq \log_q(\fD_{\T(\fp)}). 
$$
If a certain natural pairing \eqref{GPairing} between $\T(\fp)$ and the space of $\Z$-valued $\G_0(\fp)$-invariant 
harmonic cochains is perfect, then 
$$
\log_q(\fD_{\T(\fp)})\sim 2g(\fp)\deg(\fp). 
$$
\end{thm}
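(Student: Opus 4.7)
The strategy is to combine the formula of Theorem \ref{thmfDT}---advertised in the introduction as a function-field analogue of the Ullmo relation, with $\left|\Phi_{J_0(\fp),\infty}\right|$ playing the role of the Faltings height---with the estimate of Theorem \ref{thmMain1} and the genus asymptotic $g(\fp)\sim|\fp|/(q^2-1)$ recorded in $\S$\ref{ssDMC}. Once Theorem \ref{thmfDT} is available, Theorem \ref{thmMain2} reduces to a short comparison of scales.

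The crucial observation is the following \emph{scale-separation estimate}. Theorem \ref{thmMain1} gives
$$
\log_q\left|\Phi_{J_0(\fp),\infty}\right|=O(|\fp|),
$$
while the genus asymptotic gives $g(\fp)\deg(\fp)\sim |\fp|\deg(\fp)/(q^2-1)$. Since $\deg(\fp)\to\infty$ with $|\fp|$, we obtain
$$
\log_q\left|\Phi_{J_0(\fp),\infty}\right|=o\!\left(g(\fp)\deg(\fp)\right).
$$
In particular, any $O(\log_q|\Phi|)$-sized remainder coming out of Theorem \ref{thmfDT} is absorbed into the $o(g(\fp)\deg(\fp))$ error.

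I would then apply Theorem \ref{thmfDT} directly. In the unconditional direction, it should furnish a lower bound of shape $\log_q\fD_{\T(\fp)}\geq 2g(\fp)\deg(\fp)-O\!\left(\log_q|\Phi_{J_0(\fp),\infty}|\right)$, which combined with the scale-separation estimate immediately yields the first inequality of Theorem \ref{thmMain2}. Under the hypothesis that the pairing \eqref{GPairing} is perfect, Theorem \ref{thmfDT} should sharpen into a two-sided estimate, and the identical comparison of scales delivers the asymptotic $\log_q\fD_{\T(\fp)}\sim 2g(\fp)\deg(\fp)$.

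The main technical input, and the place where the real work lies, is Theorem \ref{thmfDT} itself: its proof must transport the arithmetic-geometric step from \cite{Ullmo}---originally phrased in Arakelov-theoretic language through the Faltings height---to the non-archimedean Drinfeld setting with $|\Phi_{J_0(\fp),\infty}|$ standing in for the height. The conditionality of the upper bound in Theorem \ref{thmMain2} reflects an integrality question about the Hecke pairing \eqref{GPairing}, which is the function-field counterpart of the classical problem of whether weight-$2$ newforms span the integral Hecke module; no method is currently available that bypasses this hypothesis, so it must remain as an assumption for the $\sim$-asymptotic half of the statement.
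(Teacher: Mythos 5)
Your skeleton agrees with the paper's route: invoke Theorem \ref{thmfDT}, use Theorem \ref{thmMain1} (equivalently Theorem \ref{thmfDcG}) together with $g(\fp)\sim|\fp|/(q^2-1)$ to see that $\log_q\fD_\fp=\log_q\left|\Phi_{J_0(\fp),\infty}\right|=O(|\fp|)=o(g(\fp)\deg(\fp))$, and note that perfectness of \eqref{GPairing} is only needed for the upper bound. But there is a genuine gap: you never account for where the main term $2g(\fp)\deg(\fp)$ comes from. Theorem \ref{thmfDT} says only that $\fD_\fp\,\fD_{\T(\fp)}=\prod_{i=1}^h(f_i,f_i)^2$; it cannot by itself ``furnish a lower bound of shape $\log_q\fD_{\T(\fp)}\geq 2g(\fp)\deg(\fp)-O(\log_q|\Phi_{J_0(\fp),\infty}|)$'', because without information on the Petersson norms $(f_i,f_i)$ the right-hand side of that formula is uncontrolled. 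The indispensable input you omit is Theorem \ref{thmPIPest}, i.e. $c_1|\fp|/\deg(\fp)\leq(f,f)\leq c_2|\fp|\deg(\fp)^3$ for every normalized eigenform, which gives $\log_q(f_i,f_i)=\deg(\fp)+O(\log_q\deg(\fp))$ and hence, summing over the $h=g(\fp)$ eigenforms, $2\sum_i\log_q(f_i,f_i)=2g(\fp)\deg(\fp)+o(g(\fp)\deg(\fp))$. Only after this step does your scale-separation observation absorb the $\fD_\fp$ term and yield both assertions of the theorem. Note also that this analytic estimate (via Rankin--Selberg and symmetric-square $L$-values), together with Theorem \ref{thmfDcG}, is where the real work lies; Theorem \ref{thmfDT} itself is a comparatively soft lattice-volume computation parallel to Ullmo's.

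A secondary point: you assert, but do not justify, that the unconditional case still gives a lower bound. The justification (implicit in the paper) is that \eqref{GPairing} is always non-degenerate and $\Z$-valued, so the image of $\T(\fp)$ is a sublattice of the dual lattice of $\cH_0(\fp,\Z)$ with respect to \eqref{eqPIP}; consequently $\Vol'(\T(\fp))\cdot\Vol'(\cH_0(\fp,\Z))\geq 1$ in the notation of the proof of Theorem \ref{thmfDT}, whence $\fD_\fp\,\fD_{\T(\fp)}\geq\prod_{i=1}^h(f_i,f_i)^2$, and the lower bound goes through with the same Petersson-norm input.
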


\vspace{0.1in}

To prove Theorem \ref{thmMain1} we relate the order of $\Phi_{J_0(\fp), \infty}$ to the 
eigenvalues of a certain Hecke operator, and then use some deep facts about these eigenvalues,  
such as the Ramanujan-Petersson estimate on their absolute values and their equidistribution 
with respect to a certain Sato-Tate measure. 
To relate $\Phi_{J_0(\fp), \infty}$ to a Hecke operator, in Section \ref{secDiagrams}, we prove 
two general combinatorial results of independent interest. 

The first combinatorial result (Theorem \ref{thmMF})
relates the discriminant of the weighted cycle pairing on the first homology group of a graph 
to the eigenvalues of the weighted Laplacian on the graph. We allow both the vertices and the edges 
of the graph to have weights. When all the weights are equal to $1$, our theorem specializes 
to a result of Lorenzini \cite{Lorenzini}. The reason that we need to work with weighted graphs is that the graph that arises 
in our context is the quotient of the Bruhat-Tits tree $\sT$ of $\PGL_2(\Fi)$ under the action of $\G_0(\fp)$. 
The graph $\G_0(\fp)\bs \sT$ is naturally weighted, with the weighted adjacency operator 
corresponding to a Hecke operator. 
The arithmetic application of Theorem \ref{thmMF} 
is that it relates the order of the component group of the Jacobian of a semi-stable, but 
not necessarily regular, curve over a local domain to the eigenvalues of a weighted Laplacian acting 
on its dual graph.  

The second result (Theorem \ref{thmWeylHilb}) concerns certain 
infinite graphs, called regular diagrams. We bound the spectrum of the adjacency 
operator of a finite subgraph of a diagram in terms of the spectrum of the adjacency operator of the diagram 
itself. The arithmetic application of Theorem \ref{thmWeylHilb} is that, when combined with the Ramanujan-Petersson  
conjecture, it implies that the minimal non-zero 
eigenvalue $\la_2$ of the Laplacian of the dual graph of $X_0(\fn)$ over $\infty$ is bounded from below by $q-2\sqrt{q}$; see $\S$\ref{ssEE}. 
This bound on $\la_2$ plays an important role in \cite{CKK}. 

\begin{rem} A proof of the bound $\la_2\geq q-2\sqrt{q}$ already appears 
in \cite[pp. 245-246]{CKK}. Unfortunately, that proof is not correct. The problem is that the spectrum of a finite subgraph 
of a diagram is not necessarily contained in the discrete spectrum of the diagram itself. In particular, 
the function $\tilde{f}$ constructed on page 245 of \cite{CKK} is not necessarily square-integrable,  
hence is not an automorphic form. For a more detailed discussion of this see $\S$\ref{ssRD}. 
\end{rem}

\subsection*{Acknowledgements} I thank Robert Vaughan for providing the proof of Lemma \ref{lemBobV}, 
and Dale Brownawell and Winnie Li for useful conversations. I also thank the anonymous referee 
for her/his careful reading of an earlier version of this article and numerous helpful remarks.

% ----------------------------------------------------------------------

\section{Preliminaries} 

\subsection{Graphs and Laplacians}\label{sec1} 
A \textit{graph} consists of a set of vertices $V(G)$, a set of (oriented) edges $E(G)$ 
and two maps 
$$
E(G)\to V(G)\times V(G), \quad 
e\mapsto (o(e), t(e))
$$
and 
$$
E(G)\to E(G), \quad e\mapsto \bar{e}
$$ 
such that $\bar{\bar{e}}=e$, $\bar{e}\neq e$, and $t(\bar{e})=o(e)$; cf. \cite[p. 13]{SerreT}. 
 
For $e\in E(G)$, the edge $\bar{e}$  is called the \textit{inverse} of $e$, the vertex $o(e)$ (resp. $t(e)$) is called   
the \textit{origin} (resp. \textit{terminus}) of $e$. The vertices $o(e), t(e)$ are called the \textit{extremities} 
(or \textit{end-vertices}) of $e$. We say that two vertices are \textit{adjacent} if they are the extremities of some edge. 
An \textit{orientation} of $G$ is a subset $E(G)^+$ of $E(G)$ such that $E(G)$ is the disjoint union of 
$E(G)^+$ and $\overline{E(G)^+}$. 

A \textit{path} in $G$ is a 
sequence of edges $\{e_i\}_{i\in I}$ indexed by a set $I$ where $I=\Z$, $I=\Z_{\geq 0}$ or 
$I=\{1,\dots, m\}$ for some $m\geq 1$ such that $t(e_i)=o(e_{i+1})$ for every 
$i, i+1\in I$.  We say that the path is \textit{without backtracking} if $e_i\neq \bar{e}_{i+1}$ 
for every $i, i+1\in I$. We say that the path without backtracking $\{e_i\}_{i\in \Z_{\geq 0}}$ is a \textit{half-line} 
if $o(e_i)$ is adjacent in $G$ only to $o(e_{i-1})$ and $t(e_i)$, $i\geq 1$. 

We will assume that for any $v\in V(G)$ the number of edges with $t(e)=v$ is finite,  
and that $G$ is connected, i.e., any two vertices of $G$ are connected by a path. 
In addition, we assume that $G$ has no loops (i.e., $t(e)\neq o(e)$ for any $e\in E(G)$),  
but we allow two vertices to be joined by multiple edges (i.e., there can be $e\neq e'$ 
with $o(e)=o(e')$ and $t(e)=t(e')$).  We say that $G$ is \textit{finite} if it has finitely many vertices. 

Since $G$ has no loops, we can consider $G$ as a simplicial complex. 
Choose an orientation $E(G)^+$ on $G$, and define the group $C_i(G, \Z)$ of $i$-dimensional chains of $G$ ($i=0,1$) by 
\begin{align*}
C_0(G, \Z) &=\text{free abelian group with basis }V(G),\\
C_1(G, \Z) &=\text{free abelian group with basis }E(G)^+. 
\end{align*}
(One can also define $C_1(G, \Z)$ as the quotient of the free abelian group with basis $E(G)$ 
modulo the relations $\bar{e}=-e$.)  Since $G$ is not assumed to be finite, it might be worth spelling out that 
a general element of $C_0(G, \Z)$ has the form $\sum_{v\in V(G)}n_v v$, $n_v\in \Z$, where all 
but finitely many of $n_v$ are zero (and similarly for $C_1(G, \Z)$). We have the homomorphisms 
\begin{align*}
&\partial: C_1(G, \Z)\to C_0(G, \Z) \quad \text{ given by } \quad \partial(e)=t(e)-o(e), \\
& \eps: C_0(G, \Z)\to \Z \quad \text{ given by }\quad \eps(v)=1.   
\end{align*}
Let $H_1(G, \Z):=\ker(\partial)$ be the first homology group of $G$. Then there is an exact sequence 
\begin{equation}\label{eq1}
\xymatrix{0 \ar[r] & H_1(G, \Z)  \ar[r] & C_1(G, \Z) \ar[r]^-{\partial} & C_0(G, \Z) \ar[r]^-{\eps} & \Z \ar[r] & 0.}
\end{equation}

A \textit{weight function} on edges is a map $w: E(G)\to \Z_{>0}$ such that $w(e)=w(\bar{e})$. Define a pairing  
$E(G) \times E(G) \to \Z$: 
\begin{equation}\label{eqweighte}
( e, e') = 
\begin{cases} 
w(e) & \text{if $e'=e$},\\
-w(e) & \text{if $e'=\bar{e}$},\\
0 & \text{otherwise}, 
\end{cases}
\end{equation}
and extend it linearly to a symmetric, bilinear, positive-definite pairing on $C_1(G, \Z)$. The restriction 
of this pairing to $H_1(G, \Z)$ is a weighted version of the usual cycle pairing.  

A \textit{weight function} on vertices is a map $w: V(G)\to \Z_{>0}$. Define  
a pairing $V(G) \times V(G)\to \Z $:
\begin{equation}\label{pairingonV}
\langle v, v'\rangle=
\begin{cases}
w(v) & \text{if $v=v'$}, \\
 0 & \text{otherwise},
\end{cases}
\end{equation}
and extend it linearly to a symmetric, bilinear, positive-define pairing on $C_0(G, \Z)$. 
Given a $\Z$-module $R$, the previous two pairings naturally extend to 
$$
C_i(G, R): =C_i(G, \Z)\otimes_\Z R,
$$
and so does the boundary operator $\partial: C_1(G, R)\to C_0(G, R)$. 

Let $$\partial^\ast: C_0(G, \Q) \to C_1(G, \Q)$$ be the adjoint of $\partial$ with respect to the pairings \eqref{eqweighte} and \eqref{pairingonV}, i.e., 
$$
\langle\partial f, g \rangle =(f, \partial^\ast g) \quad \text{for all $f\in C_1(G, \Q)$ and $g\in C_0(G, \Q)$}. 
$$
It is easy to check that for a given vertex $v\in V(G)$ 
$$
\partial^\ast(v)=\sum_{t(e)=v}\frac{w(v)}{w(e)}e. 
$$

\begin{defn}\label{def1} The (weighted) \textit{Laplacian} is the composition 
$$\Delta=\partial\partial^\ast: C_0(G, \Q)\to C_0(G, \Q).$$ Explicitly, this map is given by 
$$
\Delta(v)=\sum_{t(e)=v}\frac{w(v)}{w(e)}(v-o(e)). 
$$
\end{defn}

For any $f, g\in C_0(G, \R)$ we have 
$$
\langle \Delta f, g\rangle=\langle\partial\partial^\ast f, g\rangle=(\partial^\ast f, \partial^\ast g)
=\langle f, \partial\partial^\ast g\rangle =\langle f, \Delta g\rangle
$$
and 
$$
\langle \Delta f, f\rangle = (\partial^\ast f, \partial^\ast f)\geq 0. 
$$
Thus, the linear operator $\Delta$ on $C_0(G, \R)$ is self-adjoint and positive. For finite $G$, this implies  
that $C_0(G, \R)$ has an orthonormal basis consisting of eigenvectors of $\Delta$, and the eigenvalues 
of $\Delta$ are nonnegative. In that case, it is also easy to show that the 
kernel of $\Delta$ is spanned by $f_0=\sum_{v\in V(G)} v/w(v)$, so $0$ is an eigenvalue of $\Delta$ 
with multiplicity one.

\begin{defn}\label{defnDisc} Assume $h=\rank_\Z H_1(G, \Z)$ is finite. 
Choose a $\Z$-basis $\varphi_1, \dots, \varphi_h$ 
of $H_1(G, \Z)$, and let $$\fD_{G,w}:=|\det((\varphi_i, \varphi_j))_{1\leq i,j\leq h}|.$$  
\begin{comment}
denote the absolute value of the determinant 
$$
\det 
\begin{bmatrix} 
( \varphi_1, \varphi_1) & (\varphi_1, \varphi_2) & \cdots & (\varphi_1, \varphi_h) \\
( \varphi_2, \varphi_1) & (\varphi_2, \varphi_2) & \cdots & (\varphi_2, \varphi_h) \\ 
\vdots & \vdots & \cdots & \vdots \\
( \varphi_h, \varphi_1) & (\varphi_h, \varphi_2) & \cdots & (\varphi_h, \varphi_h)
\end{bmatrix}.
$$
\end{comment}
We call $\fD_{G,w}$ the \textit{discriminant} of $G$ with respect to the weight function $w$ in \eqref{eqweighte}; cf. \cite[p. 49]{SerreLF}.  
\end{defn}

\begin{lem}\label{lem4}
$\fD_{G,w}$ is the order of the cokernel of the map 
\begin{align*}
H_1(G, \Z) &\To \Hom(H_1(G, \Z), \Z) \\
\varphi & \mapsto ( \varphi, \ast ). 
\end{align*}
In particular, $\fD_{G,w}$ does not depend on the choice of a basis of $H_1(G, \Z)$. 
\end{lem}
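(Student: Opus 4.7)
The plan is to unwind the map in the statement in coordinates, identify it with the linear map on $\Z^h$ given by the Gram matrix $M=((\varphi_i,\varphi_j))_{1\leq i,j\leq h}$, and then apply the standard Smith-normal-form computation of cokernels of injective integer matrices.

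First, I would fix the chosen basis $\varphi_1,\dots,\varphi_h$ of $H_1(G,\Z)$ and introduce the dual basis $\varphi_1^\ast,\dots,\varphi_h^\ast$ of $\Hom(H_1(G,\Z),\Z)$, characterized by $\varphi_i^\ast(\varphi_j)=\delta_{ij}$. Evaluating the functional $(\varphi_j,\ast)$ at each $\varphi_i$ returns the entry $(\varphi_j,\varphi_i)$, so in these bases the map $\varphi\mapsto(\varphi,\ast)$ is represented by the symmetric integer matrix $M$.

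Next, I would verify that $M$ is non-singular. The pairing \eqref{eqweighte} extends linearly to a positive-definite bilinear form on $C_1(G,\R)$, so its restriction to the real span $H_1(G,\R)\subset C_1(G,\R)$ of the $\varphi_i$ is again positive-definite. In particular the Gram matrix $M$ has $|\det M|>0$, which means the map in the statement has trivial kernel and the target and source are free $\Z$-modules of the same rank $h$.

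The remaining step is the standard fact that an injective endomorphism of $\Z^h$ with integer matrix $M$ has finite cokernel of order $|\det M|$, which is immediate from the Smith normal form of $M$. Combined with the identification above this gives $|\coker|=|\det M|=\fD_{G,w}$. Independence of the choice of basis is then automatic, since the cokernel is defined intrinsically from the abstract map $H_1(G,\Z)\to\Hom(H_1(G,\Z),\Z)$; alternatively, replacing $(\varphi_i)$ by another $\Z$-basis amounts to conjugating $M$ by some $P\in \GL_h(\Z)$, which multiplies $\det M$ by $(\det P)^2=1$. The lemma is essentially a bookkeeping exercise in finitely generated abelian groups once non-degeneracy of the pairing on $H_1(G,\Z)$ has been recorded, so I do not foresee a real obstacle.
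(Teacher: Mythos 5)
Your argument is correct and is essentially the same as the paper's: the paper simply outsources this to Proposition 4 of \S III.2 in Serre's \emph{Local Fields}, whose proof is exactly your computation (Gram matrix of a basis, non-degeneracy from positive-definiteness of the weighted edge pairing restricted to $H_1(G,\R)$, and $|\coker|=|\det M|$ via Smith normal form). The only hypothesis worth recording explicitly is the one already in Definition \ref{defnDisc}, namely that $H_1(G,\Z)$ has finite rank $h$, so that both source and target are free of rank $h$ and the determinant argument applies.
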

\begin{proof}
This follows from Proposition 4 in $\S$III.2 of \cite{SerreLF}. 
\end{proof}

%-------------------------------------------------------------

\subsection{Harmonic cochains}\label{ssHCHO}
Fix a commutative ring $R$ with identity. An $R$-valued \textit{harmonic cochain} on a graph $G$ is a 
function $f: E(G)\to R$ that satisfies 
\begin{itemize}
\item[(i)] $$f(e)+f(\bar{e})=0\quad \text{for all $e\in E(G)$},$$
\item[(ii)] 
$$\sum_{\substack{e\in E(G) \\ t(e)=v}} f(e)=0\quad \text{for all $v\in V(G)$}.$$
\end{itemize}
Denote by $\cH(G, R)$ the group of $R$-valued harmonic cochains on $G$. 

The most important graphs in this paper are the Bruhat-Tits tree $\sT$ of $\PGL_2(\Fi)$, and the 
quotients of $\sT$. We recall the definition and introduce some notation for later use; see \cite{SerreT} for more details. 
Fix a uniformizer $\varpi_\infty$ of $\Fi$, and let $\cO_\infty$ be its ring of integers. 
The sets of vertices $V(\sT)$ and edges $E(\sT)$ are the cosets $\GL_2(\Fi)/Z(\Fi)\GL_2(\cO_\infty)$ 
and $\GL_2(\Fi)/Z(\Fi)\cI_\infty$, respectively, where $Z$ denotes the center of $\GL_2$ and $\cI_\infty$ is the Iwahori group:
$$
\cI_\infty=\left\{\begin{pmatrix} a & b\\ c & d\end{pmatrix}\in \GL_2(\cO_\infty)\ \bigg|\ c\in \varpi_\infty\cO_\infty\right\}. 
$$
The matrix $\begin{pmatrix} 0 & 1\\ \varpi_\infty & 0\end{pmatrix}$ 
normalizes $\cI_\infty$, so the multiplication from the right by this matrix on $\GL_2(\Fi)$ 
induces an involution on $E(\sT)$; this involution is $e\mapsto \bar{e}$. 
The matrices 
\begin{equation}\label{eq-setM}
E(\sT)^+=\left\{\begin{pmatrix} \varpi_\infty^k & u \\ 0 & 1\end{pmatrix}\ \bigg|\
\begin{matrix} k\in \Z\\ u\in \Fi,\ u\ \mod\ \varpi_\infty^k\cO_\infty\end{matrix}\right\}
\end{equation}
are in distinct left cosets of $\cI_\infty Z(\Fi)$, and there is a disjoint decomposition 
$$
E(\sT)=E(\sT)^+\bigsqcup E(\sT)^+\begin{pmatrix} 0 & 1\\ \varpi_\infty & 0\end{pmatrix}. 
$$
We call the edges in $E(\sT)^+$ \textit{positively oriented}. 

The group $\GL_2(\Fi)$ naturally acts on $E(\sT)$ by left multiplication. 
This induces an action on the group of $R$-valued functions on $E(\sT)$: 
for a function $f$ on $E(\sT)$ and $\gamma\in \GL_2(\Fi)$ we define the function $f|\gamma$ on $E(\sT)$ by 
$(f|\gamma)(e)=f(\gamma e)$. 
It is clear from the definition that $f|\gamma$ is harmonic if $f$ is harmonic, and 
for any $\gamma, \sigma\in \GL_2(\Fi)$ we have $(f|\gamma)|\sigma=f|(\gamma\sigma)$. 

A \textit{congruence subgroup} is a subgroup $\G\leq \GL_2(A)$ containing 
$$
\G(\fn):=\left\{\gamma\in \GL_2(A)\ |\ \gamma\equiv \begin{pmatrix} 1 & 0\\ 0 & 1\end{pmatrix}\ \mod\ \fn\right\} 
$$
for some non-zero $\fn\lhd A$. A congruence subgroup $\G$, being a subgroup of $\GL_2(\Fi)$, acts on $\sT$.  
This action is without inversions, 
i.e., $\gamma e\neq \bar{e}$, $\forall \gamma\in \G$, $\forall e\in E(\sT)$; see \cite[p. 75]{SerreT}. 
We have a natural quotient graph
$\G\bs \sT$ such that $V(\G\bs \sT)=\G\bs V(\sT)$ and
$E(\G\bs \sT)=\G\bs E(\sT)$, cf. \cite[p. 25]{SerreT}. 
Given $v\in V(\sT)$ and $e\in E(\sT)$, let 
$$\G_v=\{\gamma\in \G\ |\ \gamma v=v\} \quad \text{and}\quad \G_e=\{\gamma\in \G\ |\ \gamma e=e\}. $$
Since $\G$ is a discrete subgroup of $\GL_2(\Fi)$, the groups $\G_v$ and $\G_e$ are finite. 
It is immediate from the definitions that $Z(\F_q)\cap \G$ is a normal subgroup of any $\G_v$ and $\G_e$. 
We assign weights to vertices and edges of $\G\bs \sT$ by 
\begin{equation}\label{eq_weightsGn}
w(\tilde{v})=[\G_v:Z(\F_q)\cap \G]
\quad \text{and}\quad
w(\tilde{e})=[\G_e:Z(\F_q)\cap \G],  
\end{equation}
where $v$ (resp. $e$) is a preimage of $\tilde{v}$ (resp. $\tilde{e}$).   
It is clear that this is well-defined, and $w(\tilde{e})$ divides both $w(t(\tilde{e}))$ and $w(o(\tilde{e}))$.  

Denote by $\cH(\sT, R)^\G$ the subgroup of $\G$-invariant harmonic cochains, i.e.,  
$f|\gamma=f$ for all $\gamma\in \G$.
It is clear that $f\in \cH(\sT, R)^\G$ defines a function $f'$ on the quotient graph $\G\bs\sT$, and 
$f$ itself can be uniquely recovered from this function: If $e\in E(\sT)$ maps to $\tilde{e}\in E(\G\bs \sT)$ under the quotient map, 
then $f(e)=f'(\tilde{e})$. The group of $R$-valued \textit{cuspidal 
harmonic cochains} for $\G$, denoted $\cH_0(\sT, R)^\G$, is the 
subgroup of $\cH(\sT, R)^\G$ consisting of functions 
which have compact support as functions on $\G\bs\sT$, i.e., functions 
which assume value $0$ on all but finitely many edges of $\G\bs\sT$.  
The orientation on $\sT$ does not necessarily descent to an orientation on $\G\bs\sT$, 
but we fix some orientation $E(\G\bs\sT)^+$ and define a pairing on $\cH_0(\sT, \Z)^\G$ by 
\begin{equation}\label{eqPIP}
(f, g)=\sum_{e\in E(\G\bs\sT)^+}f(e)g(e)w(e)^{-1}. 
\end{equation}
Since $f$ and $g$ are cuspidal, all but finitely many terms of this sum are zero, so the pairing is well-defined. 
It is clear that $(\cdot, \cdot)$ is symmetric and positive-definite. It is also $\Z$-valued, as follows from \cite[(5.7.1)]{GR}. 

We will primarily work with $\G=\G_0(\fn)$. To simplify the notation, we  put $$\cH_0(\fn, R):=\cH_0(\sT, R)^{\G_0(\fn)}.$$ 
It is known that $\cH_0(\fn, \Z)$ is a free $\Z$-module of rank equal to the genus of $X_0(\fn)$; cf. \cite[p. 49]{GR}.   
A $1$-cycle $\varphi\in H_1(\G_0(\fn)\bs \sT, \Z)$ can be thought of as a $\G_0(\fn)$-invariant function $\varphi: E(\sT)\to \Z$. 
Then $\varphi^\ast: e\mapsto w(e)\varphi(e)$ is in $\cH_0(\fn, \Z)$ and 
\begin{align}\label{thmGN}
j: H_1(\G_0(\fn)\bs\sT, \Z) &\xrightarrow{\sim} \cH_0(\fn, \Z), \\ 
\nonumber \varphi &\mapsto \varphi^\ast
\end{align}
is an isomorphism by \cite{GN}. The following is straightforward: 

\begin{lem}\label{lemImmediate}
For the weighted pairing \eqref{eqweighte} on $H_1(\G_0(\fn)\bs\sT, \Z)$ and \eqref{eqPIP} 
on $\cH_0(\fn, \Z)$ we have 
$(\varphi, \psi)=(\varphi^\ast, \psi^\ast)$. 
\end{lem}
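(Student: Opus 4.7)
The plan is a one-line bookkeeping calculation; the lemma is essentially a tautology once the definitions are unpacked carefully, and the author's flag ``straightforward'' is accurate. I would simply write out both pairings explicitly with respect to a chosen orientation and observe that the weights line up.

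First, I would fix the orientation $E(\G_0(\fn)\bs\sT)^+$ used in \eqref{eqPIP}, and identify any $1$-cycle $\varphi\in H_1(\G_0(\fn)\bs\sT,\Z)\subset C_1(\G_0(\fn)\bs\sT,\Z)$ with the $\Z$-valued function on $E(\G_0(\fn)\bs\sT)$ satisfying $\varphi(\bar e)=-\varphi(e)$, extended $\G_0(\fn)$-invariantly to $E(\sT)$ (this is the identification implicitly used in the statement of the lemma and the definition of $\varphi^\ast$). Under this identification and the pairing \eqref{eqweighte} extended bilinearly, the cycle pairing becomes
\[
(\varphi,\psi)=\sum_{e\in E(\G_0(\fn)\bs\sT)^+}\varphi(e)\psi(e)\,w(e),
\]
since the basis of $C_1$ consists of the positively oriented edges with self-pairing $w(e)$.

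Next, I would substitute $\varphi^\ast(e)=w(e)\varphi(e)$ and $\psi^\ast(e)=w(e)\psi(e)$ into the pairing \eqref{eqPIP} on $\cH_0(\fn,\Z)$:
\[
(\varphi^\ast,\psi^\ast)=\sum_{e\in E(\G_0(\fn)\bs\sT)^+}\bigl(w(e)\varphi(e)\bigr)\bigl(w(e)\psi(e)\bigr)w(e)^{-1}
=\sum_{e\in E(\G_0(\fn)\bs\sT)^+}\varphi(e)\psi(e)\,w(e),
\]
which matches the expression for $(\varphi,\psi)$ above.

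Finally, I would note that nothing depends on the choice of orientation: if $e$ is replaced by $\bar e$ in the sum, both $\varphi(e)$ and $\psi(e)$ flip sign (so their product is unchanged), while $w(\bar e)=w(e)$. The only subtle point worth acknowledging is that one must consistently use the same identification (and orientation) on both sides; there is no real obstacle, and in particular no appeal to the nontrivial fact from \cite{GN} that $j$ is an isomorphism is needed for the pairing identity itself.
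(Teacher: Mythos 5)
Your proof is correct, and it is exactly the direct verification the paper has in mind (the lemma is stated without proof as ``straightforward''): unwinding \eqref{eqweighte} on the basis of positively oriented edges and substituting $\varphi^\ast(e)=w(e)\varphi(e)$ into \eqref{eqPIP} gives the same sum $\sum_{e}\varphi(e)\psi(e)w(e)$, with your remarks on orientation-independence and the non-reliance on the isomorphism $j$ being accurate.
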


\begin{rem}\label{remPIP}
The Haar measure on $\GL_2(\Fi)$ induces a push-forward measure on $E(\G\bs \sT)$, which, up to a scalar multiple, 
is equal to $w(e)^{-1}$; cf. \cite[(4.8)]{GR}. 
One can show that \eqref{eqPIP} agrees with the restriction to $\cH_0(\sT, \Z)^\G$ of the 
Petersson inner-product if one interprets $\cH_0(\sT, \C)^\G$ as a space 
of automorphic forms; see \cite[5.7]{GR}.
\end{rem}

\subsection{Hecke operators} Fix a non-zero ideal $\fn\lhd A$. Given a non-zero ideal $\fm\lhd A$, define 
an $R$-linear transformation of the space of $R$-valued functions on $E(\sT)$, the \textit{$\fm$-th Hecke operator}, by 
\begin{equation}\label{eqDefTm}
f|T_\fm=\sum f|\begin{pmatrix} a & b \\ 0 & d\end{pmatrix},
\end{equation}
where the sum is over $a,b,d\in A$ such that $a,d$ are monic, $(ad)=\fm$, $(a)+\fn=A$, and $\deg(b)< \deg(d)$. 
The Hecke operators preserve $\cH_0(\fn, R)$ and have the usual properties: They commute, 
satisfy $T_{\fm\cdot \fm'}=T_\fm\cdot T_{\fm'}$ for $\fm$ and $\fm'$ coprime, 
for a prime $\fp$, $T_{\fp^i}$ is a polynomial with integral coefficients in $T_\fp$, and 
%\begin{align*}
%T_{\fm\fm'}&= T_\fm T_{\fm'}\quad \text{if}\quad  \fm+\fm'=A,\\
%T_{\fp^i} &= T_{\fp^{i-1}}T_\fp-|\fp|T_{\fp^{i-2}}\quad \text{if}\quad  \fp\nmid \fn, \\
%T_{\fp^i} &= T_\fp^i\quad \text{if}\quad  \fp| \fn. 
%\end{align*}
$T_\fm$ is self-adjoint with respect to the pairing \eqref{eqPIP} if $\fm$ 
is coprime to $\fn$. Let $\T(\fn)$ be the commutative $\Z$-subalgebra of $\End_\Z(\cH_0(\fn, \Z))$ 
generated by all Hecke operators. 

The harmonic cuspidal cochains $\cH_0(\fn, \Z)$ have Fourier expansions, where 
the Fourier coefficients $c_\fm(f)$ of $f\in \cH_0(\fn, \Z)$ are indexed by the non-zero ideals $\fm\lhd A$; 
cf. \cite[pp. 42-43]{Analytical}. 
In \cite{Analytical}, Gekeler shows that $c_1(f)=-f\left(\begin{pmatrix} \varpi_\infty^2 & \varpi_\infty \\ 0 & 1\end{pmatrix}\right)$ and 
the bilinear pairing 
\begin{align}\label{GPairing}
\T(\fn)\times \cH_{0}(\fn, \Z) \to \Z\\
\nonumber t, f \mapsto c_1(f|t).
\end{align} 
is $\T(\fn)$-equivariant, non-degenerate, and becomes a perfect pairing after tensoring with $\Z[p^{-1}]$. 

\begin{rem}
It is not known if in general the pairing \eqref{GPairing} is perfect, without inverting $p$. 
This is in contrast to the situation over $\Q$ where the analogous pairing 
between the Hecke algebra and the space of weight-$2$ cusp forms on $\G_0(N)$ with 
integral Fourier expansions is perfect (cf. \cite[Thm. 2.2]{RibetModp}). 
%This dichotomy is partly due to the fact that the Fourier coefficients $c_\fm(f)$ are in general in $\Z[p^{-1}]$, not just $\Z$. 
In \cite{PW2}, it it shown that \eqref{GPairing} is perfect if $\deg(\fn)=3$. 
\end{rem}

Let $h=\rank_\Z  \cH_{0}(\fn, \Z)$. %(The rank $h$ depends only on the degrees and multiplicities of prime divisors of $\fn$; see \cite{GN}.)
Because \eqref{GPairing} 
is non-degenerate, $\T(\fn)$ is a commutative $\Z$-algebra which as a $\Z$-module is free of rank $h$. 
Let $t_1, \dots, t_h$ be a $\Z$-basis of $\T(\fn)$. After fixing a $\Z$-basis of $\cH_{0}(\fn, \Z)$, every 
Hecke operator can be represented by a matrix. For $M\in \mathrm{Mat}_{h\times h}(\Z)$, 
let $\Tr(M)$ denote its trace. The \textit{discriminant of $\T(\fn)$} is 
\begin{equation}\label{deffDT}
\fD_{\T(\fn)}=|\det(\Tr(t_it_j))_{1\leq i, j\leq h}|. 
\end{equation}
The discriminant $\fD_{\T(\fn)}$ does not depend on the choice of a basis of $\T(\fn)$ or $\cH_0(\fn, \Z)$; see 
\cite[p. 49]{SerreLF} or \cite[p. 66]{Reiner}. 

Let $G(\fn)$ denote the graph $\G_0(\fn)\bs \sT$ with weights \eqref{eq_weightsGn}. This graph 
is not finite, but $H_1(G(\fn), \Z)$ has finite rank, so the discriminant $\fD_\fn:=\fD_{G(\fn), w}$ is defined. 
Let $\phi_1, \dots, \phi_h$ be a $\Z$-basis of $\cH_0(\fn, \Z)$. From Definition \ref{defnDisc}, \eqref{thmGN} 
and Lemma \ref{lemImmediate}, we get 
$$
\fD_{\fn}=|\det((\phi_i, \phi_j))_{1\leq i, j\leq h}|, 
$$
where $(\phi_i, \phi_j)$ is the Petersson inner-product \eqref{eqPIP}. 

\begin{defn}
We say that $f\in \cH_0(\fn, \R)$ is a normalized eigenform if $f$ is an 
eigenvector for all $t\in \T(\fn)$ and $c_1(f)=1$. 
\end{defn}

Assume $\fn=\fp$ is prime. The function field analogue of the theory of Atkin and Lehner \cite{AL} 
implies that $\cH_0(\fp, \R)$ has a basis consisting of normalized eigenforms. We extend the  
pairing \eqref{eqPIP} to $\cH_0(\fn, \R)$. 

\begin{thm}\label{thmfDT} 
Assume the pairing \eqref{GPairing} is perfect for $\fn=\fp$. Then 
$$
\fD_\fp\fD_{\T(\fp)}=\prod_{i=1}^h (f_i, f_i)^2,  
$$
where $\{f_1, \dots, f_h\}$ is a basis of $\cH_0(\fp, \R)$ consisting of normalized eigenforms.  
\end{thm}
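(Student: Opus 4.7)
The plan is to adapt Ullmo's strategy from the classical modular setting for $J_0(N)$: express both discriminants in terms of the $\R$-basis of normalized Hecke eigenforms $\{f_i\}$ of $\cH_0(\fp,\R)$ and use the perfect Gekeler pairing \eqref{GPairing} as the bridge between the Hecke-side and Petersson-side Gram matrices.

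First I would establish that distinct normalized eigenforms are orthogonal with respect to the pairing $(\cdot,\cdot)$ of \eqref{eqPIP}. Since $\fp$ is prime, every normalized eigenform in $\cH_0(\fp,\R)$ is a newform; the function-field analogue of Atkin--Lehner theory combined with strong multiplicity one then yields distinct systems of eigenvalues for the operators $T_\fm$ with $(\fm,\fp)=1$, and the self-adjointness of those $T_\fm$ with respect to \eqref{eqPIP} forces $(f_i,f_j)=0$ for $i\neq j$. Consequently the Petersson Gram matrix becomes diagonal in the eigenform basis, namely $F:=\mathrm{diag}((f_1,f_1),\ldots,(f_h,f_h))$.

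Next, fix a $\Z$-basis $\{\phi_j\}$ of $\cH_0(\fp,\Z)$ and a $\Z$-basis $\{t_i\}$ of $\T(\fp)$, and introduce three $h\times h$ real matrices: the change of basis $P$ defined by $f_k=\sum_j P_{kj}\phi_j$; the eigenvalue matrix $\Lambda_{ki}=\lambda_k(t_i)$, where $\lambda_k:\T(\fp)\to\R$ is the character associated to $f_k$; and the Gekeler pairing matrix $M_{ij}=c_1(\phi_j|t_i)$. A direct computation gives three identities: the Petersson Gram matrix in the $\phi$-basis is $P^{-1}F(P^{-1})^T$, so $\fD_\fp=(\det P)^{-2}\prod_k(f_k,f_k)$; the fact that $\Tr$ on $\cH_0(\fp,\R)$ is the sum of eigenvalues yields $\Tr(t_it_j)=(\Lambda^T\Lambda)_{ij}$, so $\fD_{\T(\fp)}=(\det\Lambda)^2$; and the normalization $c_1(f_k)=1$ together with $f_k=\sum_j P_{kj}\phi_j$ gives $\lambda_k(t_i)=c_1(f_k|t_i)=\sum_j P_{kj}M_{ij}$, i.e.\ the matrix identity $\Lambda=PM^T$.

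The perfectness hypothesis on \eqref{GPairing} is precisely $|\det M|=1$, so $(\det\Lambda)^2=(\det P)^2$, and upon multiplying the formulas for $\fD_\fp$ and $\fD_{\T(\fp)}$ the powers of $\det P$ cancel, leaving the product of Petersson norms on the right-hand side. The main obstacle is the orthogonality step: it rests on the full strength of the function-field versions of Atkin--Lehner theory and strong multiplicity one, and requires that $T_\fp$ (which is not a priori self-adjoint) be taken into account only through its eigenvalues on newforms, with orthogonality supplied by the operators away from $\fp$; once this is in place, the remaining argument is careful linear algebra in the eigenform basis.
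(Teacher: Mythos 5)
Your overall strategy is the same as the paper's: the paper implements Ullmo's argument with covolumes of lattices (comparing $\Vol$ on $\R^h$ via \eqref{eqTpform} with the Petersson volume $\Vol'$ and using that perfectness of \eqref{GPairing} makes the image of $\T(\fp)$ the Petersson-dual lattice of $\cH_0(\fp,\Z)$), while you do the same bookkeeping with explicit Gram and change-of-basis matrices. Your individual steps are sound: orthogonality of distinct normalized eigenforms via self-adjointness of the $T_\fm$ with $(\fm,\fp)=1$ and multiplicity one, the identity $\Tr(t_it_j)=(\Lambda^T\Lambda)_{ij}$ giving $\fD_{\T(\fp)}=(\det\Lambda)^2$, the relation $\Lambda=PM^T$ from $c_1(f_k)=1$, and the translation of perfectness of \eqref{GPairing} into $\det M=\pm 1$.

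The problem is that the computation you actually carried out does not yield the stated right-hand side. Multiplying $\fD_\fp=(\det P)^{-2}\prod_{i=1}^h(f_i,f_i)$ by $\fD_{\T(\fp)}=(\det P)^2(\det M)^2$ gives $\fD_\fp\,\fD_{\T(\fp)}=(\det M)^2\prod_{i=1}^h(f_i,f_i)=\prod_{i=1}^h(f_i,f_i)$, i.e.\ the \emph{first} power of the product of Petersson norms, whereas Theorem \ref{thmfDT} asserts $\prod_{i=1}^h(f_i,f_i)^2$; your closing phrase ``leaving the product of Petersson norms on the right-hand side'' passes over this. The two formulas differ by the factor $\prod_i(f_i,f_i)$, which is enormous by Theorem \ref{thmPIPest}, so they cannot both be correct and the discrepancy must be addressed head-on. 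In fact your matrix computation pinpoints where the square enters the paper's own proof: in the step $\Vol(\T(\fp))=\Vol'(\T(\fp))\prod_{i=1}^h(f_i,f_i)$, the change-of-metric factor for an orthogonal basis whose Petersson lengths are $(f_i,f_i)^{1/2}$ should be $\bigl(\prod_{i=1}^h(f_i,f_i)\bigr)^{1/2}$, and with that exponent the covolume argument reproduces exactly the identity you derived. So as a proof of the theorem as printed your proposal has a genuine gap — it proves a different (and, as far as the linear algebra goes, the correct) identity — and you need either to exhibit a compensating factor of $\prod_i(f_i,f_i)$ that your bookkeeping misses (I do not see one), or to state explicitly that the exponent in the formula must be revised, noting also the corresponding effect on the asymptotic constant for $\log_q(\fD_{\T(\fp)})$ drawn from this theorem later in the paper.
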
 
\begin{proof}
The argument that we present is very similar to the proof of Theorem 4.1 in \cite{Ullmo}. 

The map 
\begin{equation}\label{eqTpform}
\T(\fp) \otimes\R \to \R^h, \quad t \mapsto (a_1(f_1|t), \dots, a_1(f_h|t))
\end{equation}
is an isomorphism of $\R$-algebras. The trace form on $\T(\fp)$ corresponds to the 
standard scalar product on $\R^h$. Let $\Vol$ be the standard volume form on $\R^h$. 
Then $\Vol(\T(\fp))^2=\fD_{\T(\fp)}$, where by abuse of notation we denote by $\T(\fp)$ 
the image of the lattice $\T(\fp)\subset \T(\fp) \otimes\R$ under \eqref{eqTpform}.  

Now consider the isomorphism 
$\R^h\to \cH_0(\fp, \R)$ mapping the standard basis of $\R^h$ to $\{f_1, \dots, f_h\}$. 
It is known that the eigenforms $\{f_1, \dots, f_h\}$ are orthogonal to each other 
with respect to \eqref{eqPIP}, i.e., $(f_i, f_j)=0$ if $i\neq j$. 
Let $\Vol'$ denote the volume form on $\cH_0(\fp, \R)$ corresponding 
 to the scalar product \eqref{eqPIP}. Then 
 $$
 \Vol(\T(\fp))=\Vol'(\T(\fp)) \prod_{i=1}^h (f_i, f_i)
 $$
On the other hand, $\Vol'(\cH_0(\fp, \Z))^2=\fD_\fp$, and since \eqref{GPairing} is assumed to be perfect
$$
\Vol'(\T(\fp))\cdot \Vol'(\cH_0(\fp, \Z))=1. 
$$
Combining these volume calculations, we get the formula of the theorem. 
\end{proof}

\begin{thm}\label{thmPIPest} There are positive constants $c_1$ and $c_2$, depending only on $q$, such that 
for any normalized eigenform $f\in \cH_0(\fp, \R)$,
$$
c_1 \frac{|\fp|}{\deg(\fp)}\leq (f, f)\leq c_2 |\fp| (\deg(\fp))^3. 
$$
\end{thm}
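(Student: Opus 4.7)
The strategy is to relate $(f, f)$ to a special value of the adjoint $L$-function $L(\mathrm{Ad}\, f, s)$ attached to $f$, in the spirit of the classical Shimura--Ullmo identity used in \cite{Ullmo}. I would first derive a formula
$$
(f, f) = \kappa(\fp) \cdot L(\mathrm{Ad}\, f, 1),
$$
where $\kappa(\fp)$ is a volume factor with $\kappa(\fp) \asymp |\fp|$ (depending only on $\fp$ and $q$). This identity can be obtained by unfolding the Rankin-Selberg integral of $|f|^2$ against an Eisenstein series $E(s, \cdot)$ on $\G_0(\fp)\bs\Omega$, using the Fourier expansion of $f$ and the factorization
$$
\sum_{\fm} |c_\fm(f)|^2 |\fm|^{-s} = \zeta_F^{(\fp)}(s) \cdot L(\mathrm{Ad}\, f, s) \,/\, \zeta_F^{(\fp)}(2s),
$$
which comes from Hecke multiplicativity. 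The simple pole of $\zeta_F$ at $s=1$ supplies the volume factor $\kappa(\fp) \asymp |\fp|$, and its residue picks out the value $L(\mathrm{Ad}\, f, 1)$.

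Once this identity is in hand, the theorem reduces to the two-sided bound $(\deg\fp)^{-1} \ll L(\mathrm{Ad}\, f, 1) \ll (\deg \fp)^3$. Both directions exploit the fact that over $F = \F_q(T)$ the adjoint $L$-function is a polynomial in $t = q^{-s}$ of degree $O(\deg \fp)$ (bounded by the conductor of the automorphic representation attached to $f$), whose reciprocal roots lie on a fixed circle by Drinfeld's proof of the Ramanujan-Petersson conjecture for $\mathrm{GL}_2$ over $F$. The upper bound then follows from evaluating a polynomial with roots on this circle at a point bounded away from it, combined with a standard convexity argument.

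The main obstacle is the lower bound, which requires an effective non-vanishing of $L(\mathrm{Ad}\, f, s)$ at $s=1$ that is uniform in $f$. This is analogous to the Siegel zero problem and I would handle it by a Hoffstein--Lockhart type argument, using positivity of the Dirichlet coefficients of $L(\mathrm{Sym}^2 f, s)$ together with the functional equation to exclude real zeros of $L(\mathrm{Ad}\, f, s)$ in a fixed neighborhood of $s = 1$. In the function-field polynomial setting this is more tractable than its classical analogue over $\Q$, but producing a bound uniform in $f$ and $\fp$ still requires careful analysis and is the technical heart of the proof.
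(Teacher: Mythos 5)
Your overall strategy is the same as the paper's: the proof given there is precisely the Rankin--Selberg route, relating $(f,f)$ to a special value of the symmetric-square (equivalently adjoint) $L$-function of $f$ with a volume factor of size $|\fp|$, and then quoting analytic estimates for that value (the paper simply cites Eq.~(18) and Prop.~5.5 of \cite{PapikianDC} and Thm.~4.6 of \cite{PapikianDC2} instead of reproving them). However, the two mechanisms you sketch for the endpoint estimates would not, as stated, deliver the bounds in the theorem. For the upper bound, evaluating the $L$-polynomial through its roots is not enough: there are on the order of $\deg(\fp)$ inverse roots, each contributing a factor bounded above and below by constants at the evaluation point, so this argument only yields a bound of the shape $C^{\deg(\fp)}$, exponentially far from $(\deg\fp)^3$. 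The polynomial bound comes instead from the Dirichlet-series side: Ramanujan--Petersson gives divisor-type bounds on the coefficients of $L(\Sym^2 f,s)$, and since the $L$-function is a polynomial in $q^{-s}$ of degree $O(\deg\fp)$, the value at the edge is a sum of $O(\deg\fp)$ coefficient blocks, each of polynomially bounded size; that truncation-plus-coefficient-bound argument is where the fixed power of $\deg(\fp)$ originates.

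For the lower bound, there is no Siegel-zero problem to address over $F=\F_q(T)$: the symmetric-square $L$-function is a polynomial in $q^{-s}$ whose inverse roots are Weil numbers of the expected absolute value (via Drinfeld's theorem and purity), so all zeros already lie on the critical line and a Hoffstein--Lockhart positivity argument is not the relevant tool. On the other hand, zero location alone does not give $(f,f)\gg |\fp|/\deg(\fp)$ either, for the same reason as above: a product of $\asymp\deg(\fp)$ factors, each merely bounded below by a constant, is only exponentially small. What is actually needed is a quantitative argument converting the known Riemann Hypothesis for this polynomial into a lower bound of size $(\deg\fp)^{-1}$ at the edge (for instance an Euler-product/explicit-formula argument over primes of small degree), which is exactly the content of the results the paper cites. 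So your plan is the right one and matches the paper, but both estimates need these corrections before they produce the stated exponents.
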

\begin{proof} Using the Rankin-Selberg method, the Petersson norm $(f, f)$ can be related to 
a special value of the $L$-function of the symmetric square of $f$, which can be 
estimated using analytic methods. For the details we refer to 
Equation (18) and Proposition 5.5 in \cite{PapikianDC}, and Theorem 4.6 in \cite{PapikianDC2}. 
\end{proof}

%-------------------------------------------------------------

\subsection{Jacobians of relative curves}\label{ssJRC} Let $C$ be a smooth, projective, geometrically connected curve of genus $g_C$
defined over $\F_q$. Let $F$ be the function field of $C$. 
Let $\pi: \cX\to C$ be a semi-stable curve of genus $g\geq 2$ over $C$. Recall that this means that $\pi$ 
is a proper and flat morphism whose fibres 
$\cX_{\bar{s}}$ over the geometric points $\bar{s}$ of $C$ are reduced, connected curves of arithmetic genus $g$, 
and have only ordinary double points as singularities; cf. \cite[p. 245]{NM}. We assume that 
the generic fibre $X:=\cX_F$, as a projective curve over $F$, is smooth and non-isotrivial.
Let $J:=\Pic^0_{X/F}$ be the Jacobian of $X$; cf. \cite[p. 243]{NM}. Let $\cJ\to C$ be the N\'eron model 
of $J$, and $\cJ^0$ be the connected component of the identity of $\cJ$. The assumption that $\cX\to C$ is semi-stable 
is equivalent to $(\cJ^0)_{\bar{s}}$ being a semi-abelian variety for all $\bar{s}$; see \cite[p. 246]{NM} and \cite[Prop. 5.7]{Deschamps}. 

Let $e_\cJ: C\to \cJ$ be the unit section of $\cJ\to C$, and $\Omega^1_{\cJ/C}$ be the sheaf of relative differential forms. 
The sheaf $e_\cJ^\ast(\Omega^1_{\cJ/C})$ on $C$ is locally free of rank $g$. The Parshin \textit{height} 
of $J$ is $$H(J):=\deg \bigwedge^g e_\cJ^\ast(\Omega^1_{\cJ/C}).$$ 

\begin{thm}\label{thmH1}
If $\pi: \cX\to C$ is the minimal 
regular model of $X$ over $C$, and $\omega_{\cX/C}$ is the relative dualizing sheaf, then 
$$
H(J)=\deg(\pi_\ast (\omega_{\cX/C}))=\frac{1}{12}\left(\omega_{\cX/C}\cdot \omega_{\cX/C}+\sum_{s\in C}\varrho_s\deg(s)\right), 
$$
where the sum is over the closed points of $C$ and $\varrho_s$ denotes the number of singular points in the fibre $\cX_s:=\pi^{-1}(s)$. 
\end{thm}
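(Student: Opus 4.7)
The plan is to prove the two equalities separately: the first identifies the cotangent space of the N\'eron model $\cJ$ along its unit section with $\pi_*\omega_{\cX/C}$, and the second is the Noether formula for the relative curve $\pi$.

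For the first equality, since $\pi$ is semi-stable and $\cX$ is regular, a theorem of Raynaud gives a canonical isomorphism $\cJ^0\cong \Pic^{[0]}_{\cX/C}$, where the latter is the relative Picard scheme parametrizing fibrewise degree-zero line bundles on $\cX/C$. Consequently, $e_\cJ^*\Omega^1_{\cJ/C}$ coincides with the cotangent space of $\Pic^{[0]}_{\cX/C}$ at its identity section, which is $(R^1\pi_*\cO_\cX)^\vee$. By relative Grothendieck--Serre duality,
$$(R^1\pi_*\cO_\cX)^\vee\cong \pi_*\omega_{\cX/C},$$
a locally free $\cO_C$-module of rank $g$. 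Taking the $g$-th exterior power and degree yields
$$H(J)=\deg\bigwedge^g e_\cJ^*\Omega^1_{\cJ/C}=\deg(\pi_*\omega_{\cX/C}).$$

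For the second equality, I would apply Grothendieck--Riemann--Roch to $\pi$. Since the singularities in the fibres of $\pi$ are ordinary double points, $\pi$ is a local complete intersection morphism, $\omega_{\cX/C}$ is a line bundle, and the self-intersection $\omega_{\cX/C}\cdot\omega_{\cX/C}$ is well defined. Combined with the identity $R^1\pi_*\omega_{\cX/C}\cong \cO_C$ from relative duality, the Riemann--Roch formula
$$\chi(\omega_{\cX/C})=\pi_*\!\left(\mathrm{ch}(\omega_{\cX/C})\cdot \mathrm{td}(T_{\cX/C})\right)$$
can be unwound to isolate the Todd correction supplied by the $\varrho_s$ nodes in each singular fibre, producing the Noether formula
$$12\,\deg(\pi_*\omega_{\cX/C})=\omega_{\cX/C}\cdot\omega_{\cX/C}+\sum_{s\in C}\varrho_s\deg(s).$$
This identity is classical and is the standard Noether formula for a relative curve over a base curve.

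The principal technical obstacle is the first step: identifying $\cJ^0$ with $\Pic^{[0]}_{\cX/C}$ relies on Raynaud's representability theorem for the relative Picard functor together with the regularity of $\cX$, and without these hypotheses the cotangent space of $\cJ$ along $e_\cJ$ need not pull back to $\pi_*\omega_{\cX/C}$. Once the identification is in place, the passage to the Noether formula is essentially a bookkeeping exercise in Grothendieck--Riemann--Roch.
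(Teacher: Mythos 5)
Your proposal is correct in outline, but note that the paper does not actually prove this statement internally: its proof is a citation to Szpiro \cite{Szpiro}, where both equalities (the identification of the height with $\deg(\pi_\ast\omega_{\cX/C})$ and the Noether formula for a semi-stable fibration over a curve) are established. Your sketch reconstructs essentially that standard argument: Raynaud's theorem identifying the identity component $\cJ^0$ of the N\'eron model with the relative Picard scheme (here you should say $\Pic^0_{\cX/C}$, the subfunctor of line bundles of degree zero on \emph{each} irreducible component of each fibre, rather than $\Pic^{[0]}$ of total degree zero --- the two differ when fibres are reducible, though they have the same Lie algebra, so your cotangent computation is unaffected); then $\mathrm{Lie}\,\Pic^0_{\cX/C}=R^1\pi_\ast\cO_\cX$, which is locally free and compatible with base change because the fibres are reduced and connected, and relative duality gives $(R^1\pi_\ast\cO_\cX)^\vee\cong\pi_\ast\omega_{\cX/C}$; finally Grothendieck--Riemann--Roch for the l.c.i.\ morphism $\pi$ yields the Noether formula, the localized second Chern class contributing exactly one per node precisely because $\cX$ is \emph{regular} (all nodes have thickness one --- for the non-regular semi-stable model the contribution would instead be the thickness $w_x$, consistent with Remark \ref{rem15}). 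What your route buys is a self-contained derivation valid in positive characteristic with no wild correction terms (the nodes are tame); what the paper's citation buys is simply outsourcing these verifications to Szpiro's Ast\'erisque article. The two "principal obstacles" you identify (Raynaud's representability plus regularity, and the GRR bookkeeping) are exactly the ingredients of the cited proof, so there is no gap, only the minor imprecision about $\Pic^{[0]}$ versus $\Pic^0$ noted above.
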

\begin{proof}
See \cite[p. 48]{Szpiro}. 
\end{proof}

\begin{thm}\label{thmH2}
Assume $\pi: \cX\to C$ is semi-stable and non-isotrivial. Let $\omega_{\cX/C}$ be the relative dualizing sheaf. 
Then 
$$
0\leq \omega_{\cX/C}\cdot \omega_{\cX/C}\leq 8p^e g(g-1)(g_C-1+\theta/2), 
$$
where $\theta$ is the number of geometric points of $C$ where the fibres of $\pi$ are not smooth, and 
$e$ is the modular inseparable exponent of $\pi$ as defined in \cite[p. 46]{Szpiro}. 
\end{thm}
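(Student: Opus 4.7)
The plan is to invoke Szpiro's inequality for the self-intersection of the relative dualizing sheaf in positive characteristic directly from \cite{Szpiro}; the statement above is essentially a rewriting of his main theorem in our notation. I would nevertheless sketch both directions separately, since the two bounds have quite different flavors.

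For the lower bound $\omega_{\cX/C}\cdot \omega_{\cX/C}\geq 0$, the semistability assumption makes $\omega_{\cX/C}$ nef on the surface $\cX$. Indeed, the restriction of $\omega_{\cX/C}$ to a smooth fibre has degree $2g-2\geq 0$, while on each vertical component $D$ of a singular fibre adjunction gives $\omega_{\cX/C}\cdot D\geq 0$, with equality exactly for the smooth rational components meeting the rest of the fibre transversally in two nodes. Nefness combined with the Hodge-index argument on the surface $\cX$ then yields $\omega_{\cX/C}^{2}\geq 0$.

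For the upper bound I would follow Szpiro's Kodaira-Spencer strategy. Let $S\subset C$ be the reduced divisor over which $\pi$ fails to be smooth, so that $\deg S=\theta$. Non-isotriviality forces the Kodaira-Spencer map
$$
\kappa: T_C(-S)\To R^1\pi_\ast T_{\cX/C}
$$
to be non-zero after pulling back along a suitable power of the relative Frobenius; the minimal such power is exactly $p^e$. Dualizing and twisting by $\omega_{\cX/C}^{\otimes 2}$ produces a non-zero $\cO_C$-linear map whose existence forces a numerical inequality on $\pi_\ast(\omega_{\cX/C}^{\otimes 2})$; combined with Riemann-Roch on $\cX$ and the identity $\deg(\Omega^1_C(\log S))=2g_C-2+\theta$, this yields the stated bound, with the factor $p^e$ accounting for the Frobenius twist and the factor $8g(g-1)$ coming from a genus-dependent rank computation.

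The main obstacle is the characteristic-$p$ bookkeeping encoded by the inseparable exponent $e$. All of the delicate analysis of the Frobenius-twisted Kodaira-Spencer map, including the precise constants $8$ and $g(g-1)$, is carried out in \cite{Szpiro}, so my actual proof would consist of a reference to the relevant theorems there together with a verification that our hypotheses on $\pi: \cX\to C$ (semistable, non-isotrivial, with smooth generic fibre) put us in the setting treated by Szpiro.
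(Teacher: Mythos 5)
Your proposal is correct and matches the paper's treatment: the paper proves this theorem simply by citing Proposition 1 and Theorem 3 of \cite{Szpiro}, which is exactly what you ultimately do, with your sketches of the nefness argument for $\omega_{\cX/C}\cdot\omega_{\cX/C}\geq 0$ and of Szpiro's Frobenius-twisted Kodaira--Spencer argument serving as reasonable supplementary context. No gap to report.
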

\begin{proof}
See Proposition 1 and Theorem 3 in \cite{Szpiro}. 
\end{proof}

Let $s\in C$ be a closed point, and $x\in X_s$ be a singular point. There exists 
a scheme $S'$, \'etale  over $S:=\Spec(\cO_{C, s})$, such that any point $x'\in X':=X\times_S S'$ 
lying above $x$, belonging to a fiber $X_{s'}'$, is a split ordinary double point, and 
$$
\widehat{\cO}_{X', x'}\cong \widehat{\cO}_{S', s'}[\![u, v]\!]/(uv-c)
$$
for some $c\in \cO_{S', s'}$. Moreover, the valuation $w_x$ of $c$ for the normalized valuation of $\cO_{S', s'}$ 
is independent of the choice of $S', s'$, and of $x'$. For the proof of these facts we refer to \cite[Cor. 10.3.22]{Liu}. 
%The integer $w_x$ is called the \textit{thickness} of $x$ in $X$; cf. \cite[p. 515]{Liu}. 

One can associate a graph $G_{X_s}$ to $X_s$, the so-called \textit{dual graph} (cf. \cite[p. 511]{Liu}): 
Let $k_s$ be the residue field at $s$. 
The vertices of $G_{X_s}$ are the irreducible components of $X_s\times_{k_s} \overline{k_s}$, and each 
ordinary double point $x\in X_s$ defines an edge $e_x$ whose end-vertices correspond to the irreducible components containing $x$ 
(the two orientations of $e_x$ correspond to a choice of one of the two branches passing through $x$ as the 
origin of $e_x$). We assign the weight $w(e_x)=w_x$.  

\begin{thm}\label{thmGroth}
Let $\Phi_{J, s}:=\cJ_s/\cJ_s^0$ be the group of connected components of $J$ at $s\in C$. Then $|\Phi_{J,s}|=\fD_{G_{X_s}, w}$. 
\end{thm}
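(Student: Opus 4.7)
The plan is to reduce to Raynaud's classical theorem identifying the component group of the Jacobian of a semi-stable (but possibly non-regular) curve in terms of the cycle pairing on its dual graph, and then to interpret the resulting pairing as the weighted pairing from $\S \ref{sec1}$.

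First I would pass to the local situation: the formation of the Néron model commutes with étale base change, so $\Phi_{J,s}$ is computed after replacing $C$ with the spectrum of the strict henselization $\cO^{sh}_{C,s}$. The dual graph $G_{X_s}$ together with the thicknesses $w_x$ is similarly unchanged, so it is enough to prove the statement for a semi-stable curve over a strictly henselian discrete valuation ring. In this local setting the generic fibre is smooth and the special fibre has only split ordinary double points, so Raynaud's theory applies.

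Next I would invoke Raynaud's description of $\Phi_{J,s}$ via the monodromy pairing. Let $\widetilde{\cX}\to\Spec(\cO^{sh}_{C,s})$ be the minimal desingularization of $\cX\times_C\Spec(\cO^{sh}_{C,s})$; blowing up the node $x$ of thickness $w_x$ inserts a chain of $w_x-1$ projective lines, so the dual graph $\widetilde{G}$ of the special fibre of $\widetilde{\cX}$ is obtained from $G_{X_s}$ by subdividing each edge $e_x$ into $w_x$ unit edges. For the regular semi-stable model $\widetilde{\cX}$, Raynaud's theorem (see BLR Chapter~9, or \cite[Thm. 10.1.18]{Liu}) identifies $\Phi_{J,s}$ with the cokernel of the map $H_1(\widetilde{G},\Z)\to \Hom(H_1(\widetilde{G},\Z),\Z)$ induced by the unit cycle pairing. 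By Lemma~\ref{lem4} applied with all weights equal to $1$, the order of this cokernel equals $\fD_{\widetilde{G},1}$.

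Finally I would compare $\fD_{\widetilde{G},1}$ with $\fD_{G_{X_s},w}$. The natural collapsing map $\widetilde{G}\to G_{X_s}$ induces an isomorphism $H_1(\widetilde{G},\Z)\xrightarrow{\sim} H_1(G_{X_s},\Z)$, because a cycle in $\widetilde{G}$ which enters a subdivision chain from one end must traverse it entirely and exit at the other end. Under this isomorphism, a cycle that uses the edge $e_x$ in $G_{X_s}$ with multiplicity $n$ uses each of the $w_x$ unit edges in the corresponding chain of $\widetilde{G}$ with multiplicity $n$; consequently, its contribution to the unit self-pairing on $\widetilde{G}$ is $w_x n^2$, which matches the contribution $w(e_x)\,n^2$ in the weighted pairing \eqref{eqweighte} on $G_{X_s}$. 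Cross terms behave identically. Hence the two Gram matrices coincide and $\fD_{\widetilde{G},1}=\fD_{G_{X_s},w}$, completing the proof.

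The main obstacle is to locate Raynaud's theorem in the exact form needed and, especially, the combinatorial check in the last step that subdividing an edge $w_x$ times and keeping weight $1$ yields the same discriminant as keeping the single edge with weight $w_x$. This calculation is elementary but must be carried out carefully, since the overall sign conventions in the cycle pairing and the choice of orientation $E(G)^+$ enter in the bookkeeping.
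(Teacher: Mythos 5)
Your argument is correct, but it takes a different route from the paper. The paper's proof is a one-line citation of the monodromy-pairing results of SGA~7, Exp.~IX, 11.5 and 12.10, which describe $\Phi_{J,s}$ directly in terms of the pairing on $H_1(G_{X_s},\Z)$ with the thicknesses $w_x$ already built in as weights; no passage to a regular model is needed. You instead reduce to a strictly henselian base, replace $\cX$ by its minimal desingularization, invoke the unweighted form of Raynaud's theorem for regular semi-stable models (component group $=$ cokernel of the unit cycle pairing on the dual graph $\widetilde{G}$), and then check that subdividing each edge $e_x$ into $w_x$ unit edges does not change the discriminant, since the homology groups are identified by the collapsing map and the Gram matrices agree term by term. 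That subdivision picture is exactly the content of the paper's Remark \ref{rem15} (Liu, Cor.~10.3.25), and your discriminant comparison is the elementary but necessary supplement to it, so the combinatorial step is sound. What your route buys is a proof resting on the more commonly quoted regular-model version of Raynaud's theorem, at the cost of the extra blow-up and subdivision bookkeeping; what the paper's citation buys is brevity, since SGA~7 already states the result for semi-stable, not necessarily regular, curves with the weights present. One small caveat: the references you name (BLR Ch.~9, Liu) state Raynaud's theorem in terms of the intersection matrix of the components rather than literally as the cokernel of the cycle pairing on $H_1(\widetilde{G},\Z)$; the translation between the two is standard (it is the unweighted special case of the kind of bookkeeping done in Theorem \ref{thmMF}), but you should either cite a source that states the cycle-pairing form or include that short translation.
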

\begin{proof}
This follows from 11.5 and 12.10 in \cite{SGA7}. 
\end{proof}

\begin{rem}\label{rem15}
Let $\widetilde{X}\to X$ be the minimal desingularization. The dual graph $G_{\widetilde{X}_s}$ 
is obtained from $G_{X_s}$ by replacing each $e_x\in E(G_{X_s})$ by a path without backtracking of length $w_x$
and assigning weight $1$ to all edges of the resulting graph; cf. \cite[Cor. 10.3.25]{Liu}. 
%The N\'eron models of the Jacobians of $\widetilde{X}$ and $X$ are isomorphic, and hence also 
%the component groups are isomorphic for all $s\in C$. Theorem \ref{thmGroth} then implies that $\fD_{G_{X_s}, w}=\fD_{G_{\widetilde{X}_s}, 1}$. 
%This equality is not obvious from the definition of the discriminant of a graph. 
\end{rem}

\section{Eigenvalues of Laplacian}\label{secDiagrams}

The notation and assumptions in this section are those 
of $\S$\ref{sec1}. In particular, $G$ is a weighted connected graph. 
 
\subsection{Discriminant and eigenvalues} 
Let $V$ be a finite dimensional vector space over $\Q$. A \textit{lattice} of $V$ is a 
$\Z$-submodule $\La$ of $V$ that is finitely generated and spans $V$. 
Following \cite[$\S$III.1]{SerreLF}, for an arbitrary pair of lattices 
$\La_1$ and $\La_2$ in $V$ define a function $\chi(\La_1,\La_2)$ as follows: 
Pick a sublattice $\La\subset \La_1\cap \La_2$, and put  
$$
\chi(\La_1, \La_2):=\frac{|\La_1/\La|}{|\La_2/\La|}. 
$$ 
By \cite[Lem. 1, p. 47]{SerreLF}, $\chi(\La_1, \La_2)$ does not depend on the choice of $\La$. Moreover, by 
\cite[Prop. 1, p. 48]{SerreLF}, the following formula is valid:
$$
\chi(\La_1, \La_2)\cdot \chi(\La_2, \La_3)=\chi(\La_1, \La_3). 
$$
 
\begin{thm}\label{thmMF} Assume $G$ is finite with $n$ vertices. Let 
$$
0< \la_1\leq \la_2\leq \cdots \leq \la_{n-1}
$$ 
be the non-zero eigenvalues of $\Delta$. Then 
$$
\fD_{G,w} \sum_{v\in V(G)}\prod_{v'\neq v}w(v')=\prod_{i=1}^{n-1}\la_i \prod_{e\in E(G)^+}w(e). 
$$
\end{thm}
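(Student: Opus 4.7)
The plan is to compute the $(\cdot,\cdot)$-discriminant of the rank-$m$ lattice
$$\Lambda := H_1(G,\Z) \oplus \partial^\ast(L_0) \subset C_1(G,\Q)$$
in two different ways and equate the results; here $m = |E(G)^+|$ and $L_0 := C_0^0(G,\Z) = \ker(\eps|_{C_0(G,\Z)})$, which coincides with $\partial C_1(G,\Z)$ by connectedness of $G$. Two preliminary observations set the stage. The splitting $C_0(G,\Q) = \Q f_0 \oplus C_0^0(G,\Q)$ with $f_0 := \sum_v v/w(v)$ is orthogonal under $\langle\cdot,\cdot\rangle$ and preserved by $\Delta$, which kills $f_0$ and restricts to $C_0^0(G,\Q)$ with eigenvalues $\la_1,\ldots,\la_{n-1}$. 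Correspondingly $\partial^\ast: C_0^0(G,\Q) \to H_1(G,\Q)^\perp$ is a $\Q$-linear isomorphism, and the two summands of $\Lambda$ are orthogonal under $(\cdot,\cdot)$ since $(h,\partial^\ast g) = \langle\partial h, g\rangle = 0$ for $h \in H_1$.

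The orthogonality immediately factorizes the first expression: $d(\Lambda) = \fD_{G,w}\cdot d(\partial^\ast(L_0))$. To compute $d(\partial^\ast(L_0))$ I would fix a vertex $v_0$ and take the basis $x_i := v_i - v_0$ of $L_0$ for $i = 1,\ldots,n-1$. By adjointness, $(\partial^\ast x_i, \partial^\ast x_j) = \langle x_i, \Delta x_j\rangle$, and this Gram matrix factors as $\mathbf{G}\cdot \mathbf{M}$, where $\mathbf{G} = (\langle x_i,x_j\rangle)_{ij}$ and $\mathbf{M}$ is the matrix of $\Delta|_{C_0^0}$ in the basis $(x_i)$. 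Hence $d(\partial^\ast(L_0)) = \det(\mathbf{G})\cdot \prod_{i=1}^{n-1}\la_i$, and applying the matrix determinant lemma to $\mathbf{G} = \mathrm{diag}(w(v_1),\ldots,w(v_{n-1})) + w(v_0)\,\mathbf{1}\mathbf{1}^T$ yields $\det(\mathbf{G}) = \sum_{v\in V(G)}\prod_{v'\neq v} w(v')$.

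For the second expression I would use the general lattice-comparison identity $d(\Lambda) = \chi(C_1(G,\Z),\Lambda)^2 \cdot d(C_1(G,\Z))$ from \cite[$\S$III.1]{SerreLF}, together with $d(C_1(G,\Z)) = \prod_{e\in E(G)^+}w(e)$ (immediate from \eqref{eqweighte}). To evaluate the comparison index, apply the multiplicativity of $\chi$ along the short exact sequence
$$0 \to H_1(G,\Q) \to C_1(G,\Q) \xrightarrow{\partial} C_0^0(G,\Q) \to 0,$$
observing that both $C_1(G,\Z)$ and $\Lambda$ meet $H_1(G,\Q)$ in $H_1(G,\Z)$, while $\partial C_1(G,\Z) = L_0$ and $\partial\Lambda = \Delta(L_0)$. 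This gives $\chi(C_1(G,\Z),\Lambda) = \chi(L_0,\Delta(L_0)) = |\det(\Delta|_{C_0^0(G,\Q)})| = \prod\la_i$, and hence $d(\Lambda) = (\prod\la_i)^2\prod_e w(e)$.

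Equating the two expressions for $d(\Lambda)$ and cancelling one factor of $\prod\la_i$ yields the theorem. The main point to verify carefully is the multiplicativity of $\chi$ along the short exact sequence; when $w(e)\mid w(v)$ holds for every endpoint $v$ of $e$ (as is the case for the graphs $\G_0(\fn)\bs\sT$ of interest later, by \eqref{eq_weightsGn}), one has $\partial^\ast(L_0) \subseteq C_1(G,\Z)$, so $\Lambda \subseteq C_1(G,\Z)$, and the argument simplifies to the direct index identity $[C_1(G,\Z):\Lambda] = [L_0:\Delta(L_0)] = \prod\la_i$ via the short exact sequence for the integral chain complex. Everything else is routine linear algebra.
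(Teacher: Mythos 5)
Your proof is correct, and although it follows the same broad strategy as the paper --- computing an invariant of a lattice of the shape $H_1(G,\Z)\oplus\partial^\ast(\cdot)$ inside $C_1(G,\Q)$ in two ways via Serre's $\chi$ --- the execution is genuinely different, and in places cleaner. The paper takes $\partial^\ast(C_0)$ (the image of all of $C_0$) and computes $\chi(\widetilde{C}_1, H_1\oplus\partial^\ast(C_0))$ using the dual modules $C_i^\vee$, Lemma \ref{lem4}, and two auxiliary indices, $\chi(\Delta(C_0'),\Delta(C_0))=1/r$ and $\chi(\partial^\vee(C_0^\vee),\partial^\vee\pi(C_0))=d$, the latter by a gcd-of-minors argument; the factor $\sum_{v}\prod_{v'\neq v}w(v')$ emerges there only as the product $dr$. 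You instead take the image of $L_0=\ker(\eps)$, so that this factor appears directly as the Gram determinant of $L_0$ in the basis $v_i-v_0$ (matrix determinant lemma), and the codifferent/dual-module bookkeeping is replaced by the standard relation $d(\Lambda_2)=\chi(\Lambda_1,\Lambda_2)^2\,d(\Lambda_1)$ (which is in \S III.2 of Serre rather than \S III.1 --- a harmless slip). The step you rightly single out, multiplicativity of $\chi$ along $0\to H_1(G,\Q)\to C_1(G,\Q)\xrightarrow{\partial} C_0^0(G,\Q)\to 0$, is true and needs only a short argument: for nested lattices $M_2\subseteq M_1$ the modular law gives $[M_1:M_2]=[M_1\cap U:M_2\cap U]\cdot[\partial M_1:\partial M_2]$, and the general case follows by passing to the common sublattice $M_1\cap M_2$; you also use, and should record, that $\Lambda\cap H_1(G,\Q)=H_1(G,\Z)$, which holds because $\partial^\ast$ is injective on $\ker(\eps)\otimes\Q$. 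With these details written out your argument is complete, and your $\chi$-based second computation is precisely what makes it valid for arbitrary weights, since without the divisibility $w(e)\mid w(v)$ the lattice $\Lambda$ need not lie in $C_1(G,\Z)$, exactly as you observe.
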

\begin{proof}
To simplify the notation, let $C_i:=C_i(G, \Z)$, $C_i^\vee:=\Hom(C_i, \Z)$, $H_1:=H_1(G, \Z)$, and $H_1^\vee:=\Hom(H_1, \Z)$. 
Let 
$$
\widetilde{C}_1:=\{y\in C_1(G, \Q)\ |\ (x, y)\in \Z \text{ for all }x\in C_1\}
$$
be the codifferent of $C_1$; this is a lattice in $C_1(G, \Q)$.  
Let $C_0':=\ker(\eps)$. Consider the diagram 
$$
\xymatrix{
& &  & \widetilde{C}_1 & & & \\
& 0 \ar[r] & H_1\ar[r] & C_1\ar@{^{(}->}[u]  \ar[r]^-{\partial}\ar[d]_-{\phi} & C_0 \ar[r]^-{\eps} & \Z \ar[r] & 0 \\
0\ar[r] & \Z\ar[r] & C_0^\vee \ar[r]^-{\partial^\vee} & C_1^\vee\ar@/_1pc/@{-->}[uu]_(.3){\phi^{-1}}  \ar[r] & H_1^\vee \ar[r] & 0\\
& & C_0 \ar[u]^-{\pi} &  &  & 
}
$$
where 
$$
\phi(e)= (e, \ast) \quad\text{and}\quad \pi(v)=\langle v, \ast\rangle. 
$$
In the diagram the horizontal lines are exact sequences, and 
$\phi^{-1}$ denotes the inverse of $\phi$ as an isomorphism $C_1\otimes \Q\to C_1^\vee\otimes \Q$.   
Note that $\phi^{-1}$ maps $C_1^\vee$ isomorphically onto $\widetilde{C}_1$, and 
$
\partial^\ast=\phi^{-1}\partial^\vee \pi$. 
Let $\partial^\ast(C_0)$ denote the image of $C_0$ under $\partial^\ast$, which we consider as a $\Z$-submodule of $\widetilde{C}_1$. 
It is easy to see that $H_1\cap \partial^\ast(C_0) = 0$, and $H_1\oplus \partial^\ast(C_0)$ is a lattice of $C_1\otimes \Q$. 

The formula in the theorem will follow by computing $\chi(\widetilde{C}_1, H_1\oplus \partial^\ast(C_0))$ in two different ways. 
On one hand, by applying $\phi$, we get 
$$
\chi(\widetilde{C}_1, H_1\oplus \partial^\ast(C_0))=\chi(C_1^\vee, \phi(H_1)\oplus \partial^\vee\pi(C_0))
$$
$$
=\chi(C_1^\vee, \phi(H_1)\oplus \partial^\vee(C_0^\vee))\cdot \chi(\phi(H_1)\oplus \partial^\vee(C_0^\vee), \phi(H_1)\oplus \partial^\vee\pi (C_0))
$$
$$
=\chi(H_1^\vee, \phi(H_1))\cdot \chi(\partial^\vee(C_0^\vee), \partial^\vee\pi (C_0)),   
$$
so Lemma \ref{lem4} implies 
\begin{equation}\label{eq2}
\chi(\widetilde{C}_1, H_1\oplus \partial^\ast(C_0))= \fD_{G,w}\cdot \chi(\partial^\vee(C_0^\vee), \partial^\vee\pi (C_0)). 
\end{equation}

On the other hand, 
$$
\chi(\widetilde{C}_1, H_1\oplus \partial^\ast(C_0))=\chi(\widetilde{C}_1, C_1)\cdot \chi(C_1, H_1\oplus \partial^\ast(C_0))
$$
$$
=\chi(\widetilde{C}_1, C_1)\cdot \chi(C_0', \Delta(C_0))
$$
$$
=\chi(\widetilde{C}_1, C_1)\cdot \chi(C_0', \Delta(C_0')) \cdot \chi(\Delta(C_0'), \Delta(C_0)). 
$$
Note that the restriction of $\Delta$ to $C_0'\otimes \Q$ is an invertible operator, so by Proposition 2 in $\S$III.1 of \cite{SerreLF}, 
$$
\chi(C_0', \Delta(C_0'))=\det(\Delta|_{C_0'\otimes \Q})=\prod_{i=1}^{n-1} \la_i. 
$$
It is clear that 
$$
\chi(\widetilde{C}_1, C_1) = |\widetilde{C}_1/C_1|=\prod_{e\in E(G)^+}w(e),
$$
since $\{e/w(e)\ |\ e\in E(G)^+\}$ is a basis of $\widetilde{C}_1$. Hence 
\begin{equation}\label{eq3}
\chi(\widetilde{C}_1, H_1\oplus \partial^\ast(C_0)) = \prod_{i=1}^{n-1} \la_i \left(\prod_{e\in E(G)^+}w(e)\right) \chi(\Delta(C_0'), \Delta(C_0)).
\end{equation}

It remains to compute 
$\chi(\Delta(C_0'), \Delta(C_0))$ and $\chi(\partial^\vee(C_0^\vee), \partial^\vee\pi (C_0))$. 
We have 
$$
\chi(\Delta(C_0'), \Delta(C_0))^{-1}=\chi(\Delta(C_0), \Delta(C_0'))=|\Delta(C_0)/\Delta(C_0')|. 
$$
Since $C_0=C_0'\oplus \Z v_0$ for a fixed vertex $v_0$, we see that $\Delta(C_0)/\Delta(C_0')\cong \Z/N\Z$ 
is cyclic generated by $\Delta v_0$. 
Let 
$$
f=\sum_{v\in V(G)} \prod_{v'\neq v}w(v') v.
$$ 
It is easy to check that $\Delta f=0$. Let $d=\underset{v\in V(G)}{\gcd}(\prod_{v'\neq v}w(v'))$. Then $f_0:=f/d$ 
is a primitive element in $C_0$ which generates $\ker{\Delta}$. Let 
$$
r=\frac{1}{d}\sum_{v\in V(G)} \prod_{v'\neq v}w(v'). 
$$
Since $r v_0-f_0\in C_0'$, we have $r\Delta(v_0)=r\Delta(v_0)-\Delta(f_0)=\Delta(rv_0-f_0)\in \Delta(C_0')$.
This implies that $N$ divides $r$. On the other hand, 
$N\Delta v_0\in \Delta(C_0')$ implies that there exists some $f'\in C_0'$ such that $\Delta(Nv_0)=\Delta(f')$. 
Thus, $Nv_0-f'\in \ker(\Delta)$. But the kernel of $\Delta$ in $C_0$ is generated by $f_0$. Hence $Nv_0-f'=sf_0$ for some $s\in \Z$. 
Applying $\eps$ to both sides, we get $N=sr$, so $r\mid N$. Combining this with $N\mid r$, we get $N=r$. Therefore, 
\begin{equation}\label{eq4}
\chi(\Delta(C_0'), \Delta(C_0)) = 1/r. 
\end{equation}

Let $m=|E(G)^+|$. Since $G$ is connected, $m\geq n-1$. 
By fixing an ordering of $V(G)$ and $|E(G)^+|$, we can think of $\partial^\vee(C_0^\vee)$ as the 
submodule of $\Z^m$ generated by the rows of an $n\times m$ matrix $M$ with entries in $\Z$, 
whose rows are labelled by the vertices and columns by the edges. Since $\ker(\partial^\vee)$ has $\Z$-rank $1$, 
the rank of $M$ is $n-1$. Note that $C_1^\vee/\partial^\vee(C_0)\cong H_1^\vee$ is a free $\Z$-module.  
Hence by a well-known fact from linear algebra (cf. \cite[p. 88]{Prasolov}) the greatest common divisor of minors of order $n-1$ of $M$ 
is equal to $1$. Let $D$ be the $n\times n$ diagonal matrix whose $(i,i)$th entry with respect to our ordering of vertices 
is $w(v_i)$. Now $\partial^\vee\pi (C_0)$ is the 
submodule of $\Z^m$ generated by the rows of $DM$. Hence $\chi(\partial^\vee(C_0^\vee), \partial^\vee\pi (C_0))$ 
is equal to the order of the torsion subgroup of $C_1^\vee/\partial^\vee\pi (C_0)$. 
In matrix terminology, this latter number is equal to the greatest common divisor of the minors of order $n-1$ of $DM$. 
It is easy to see that this greatest common divisor is equal to $d$ times the greatest common divisor of the minors of order $n-1$ of $M$. 
Thus, 
\begin{equation}\label{eq5}
\chi(\partial^\vee(C_0^\vee), \partial^\vee\pi (C_0)) = d. 
\end{equation}

Now the claim of the theorem easily follows by combining equations \eqref{eq2}-\eqref{eq5}. 
\end{proof}

\begin{cor}
If $w(v)=1$ for all $v\in V(G)$, then 
$$
\fD_{G,w}=\frac{1}{n}\prod_{i=1}^{n-1}\la_i \prod_{e\in E(G)^+}w(e). 
$$
\end{cor}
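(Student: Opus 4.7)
The plan is to simply specialize the formula of Theorem \ref{thmMF} to the case where the vertex weight function is identically $1$. In the general formula
$$
\fD_{G,w}\sum_{v\in V(G)}\prod_{v'\neq v}w(v') = \prod_{i=1}^{n-1}\la_i \prod_{e\in E(G)^+}w(e),
$$
the left-hand factor $\prod_{v'\neq v}w(v')$ equals $1$ for every choice of $v$ when each $w(v')=1$, so the sum over $v\in V(G)$ collapses to $n=|V(G)|$. Dividing both sides by $n$ yields the claimed identity.

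There is no real obstacle here; the only thing to notice is that the Laplacian $\Delta$ in Definition \ref{def1} still retains nontrivial edge weights through the coefficients $w(v)/w(e) = 1/w(e)$, so the eigenvalues $\la_i$ and the factor $\prod_{e\in E(G)^+}w(e)$ on the right-hand side need not simplify further. Thus the statement follows directly from Theorem \ref{thmMF} as a one-line computation.
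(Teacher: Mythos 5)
Your proposal is correct and is exactly how the corollary follows in the paper: setting all vertex weights to $1$ in Theorem \ref{thmMF} turns the sum $\sum_{v}\prod_{v'\neq v}w(v')$ into $n$, and the identity follows at once. Nothing further is needed.
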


\begin{figure}%[h]
\begin{tikzpicture}[semithick, node distance=1.5cm, inner sep=.5mm, vertex/.style={circle, fill=black}]
\node[vertex] [label=left:$v_1$](1){};
  \node (2) [right of=1] {$\vdots$};
  \node[vertex] (3) [right of=2, label=right:$v_2$] {};
  \draw (1) edge[bend right]  (3) (1) edge[bend right=70] node [below] {$e_m$}  (3) (1) edge[bend left]  (3) (1) edge[bend left=70] node [above] {$e_1$} (3); 
\end{tikzpicture}
\caption{}\label{Fig1}
\end{figure}

\begin{example}\label{exBanana}
Let $G$ be a graph consisting of two vertices joined by $m$ edges; see Figure \ref{Fig1}.
Let $w_i=w(e_i)$, and assume $w(v_1)=w(v_2)=1$. Then $$\Delta(v_1-v_2)=2\left(\sum_{i=1}^m w_i^{-1}\right)(v_1-v_2).$$
Hence the non-zero eigenvalue is $2\sum_{i=1}^m w_i^{-1}$ and 
$$
\fD_{G,w}=\sum_{i=1}^m \prod_{j\neq i}w_j. 
$$
Combining this calculation with Theorem \ref{thmGroth} gives an alternative proof of \cite[Cor. 9.6/10]{NM}. 
\end{example}

\subsection{Diagrams} In this subsection we investigate the relationship between the spectra of 
certain infinite graphs and their finite subgraphs. 

\begin{defn}\label{defnDiagram}
Let $G$ be a weighted (possibly infinite) graph as in $\S$\ref{sec1}. 
We make the following assumptions: 
\begin{enumerate} 
\item[(i)] $G$ is bipartite, i.e., $V(G)$ is a disjoint union $V(G)=O\sqcup I$ such that 
any edge $e\in E(G)$ has one of its end-vertices in $O$ and the other in $I$; 
\item[(ii)] $w(e)$ divides $w(t(e))$ for any $e\in E(G)$ (hence also $w(e) \mid w(o(e))$); 
\item[(iii)] there is a positive integer $q$ such that for any $v\in V(G)$  
$$
\sum_{\substack{e\in E(G)\\ t(e)=v}} \frac{w(v)}{w(e)}=q+1; 
$$
\item[(iv)] $\sum_{v\in V(G)} w(v)^{-1}<\infty$. 
\end{enumerate}
In \cite{Morgenstern}, a graph with these properties is called a \textit{$(q+1)$-regular diagram}. 
\end{defn}

\begin{lem}\label{lemI=O}
$\sum_{v\in I}w(v)^{-1}=\sum_{v\in O}w(v)^{-1}$. 
\end{lem}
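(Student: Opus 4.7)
The plan is to exploit condition (iii) to rewrite each of the sums $\sum_{v\in I}w(v)^{-1}$ and $\sum_{v\in O}w(v)^{-1}$ as an edge sum, and then use bipartiteness (i) together with $w(e)=w(\bar{e})$ to identify the two edge sums.

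More concretely, condition (iii) can be rephrased as
\[
\frac{q+1}{w(v)}=\sum_{\substack{e\in E(G)\\ t(e)=v}}\frac{1}{w(e)}
\]
for every $v\in V(G)$. Summing this over $v\in I$ (which converges by (iv) since each term is bounded by $(q+1)w(v)^{-1}$), the right-hand side becomes
\[
\sum_{v\in I}\sum_{\substack{e\in E(G)\\ t(e)=v}}\frac{1}{w(e)}=\sum_{\substack{e\in E(G)\\ t(e)\in I}}\frac{1}{w(e)}.
\]
Now I would use the bipartite structure: since every edge has one extremity in $O$ and the other in $I$, the involution $e\mapsto \bar{e}$ gives a bijection between $\{e\in E(G)\mid t(e)\in I\}$ and $\{e\in E(G)\mid t(e)\in O\}$. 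Because $w(e)=w(\bar{e})$, this bijection preserves the summand $1/w(e)$, so
\[
\sum_{\substack{e\in E(G)\\ t(e)\in I}}\frac{1}{w(e)}=\sum_{\substack{e\in E(G)\\ t(e)\in O}}\frac{1}{w(e)}.
\]
Applying the same computation in reverse to the right-hand side (now summing condition (iii) over $v\in O$), I would obtain $(q+1)\sum_{v\in I}w(v)^{-1}=(q+1)\sum_{v\in O}w(v)^{-1}$ and then divide by $q+1$.

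There is essentially no obstacle here; the only thing to be careful about is the absolute convergence of the double sum (so that the swap of summation order is justified), which is guaranteed by (iv) and the fact that each vertex has only finitely many incident edges (so the inner sum is finite and bounded by $(q+1)w(v)^{-1}$).
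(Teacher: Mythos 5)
Your proof is correct and is essentially identical to the paper's own argument: both sum condition (iii) over $I$ and over $O$, interchange the order of summation, and use bipartiteness together with $w(e)=w(\bar{e})$ to match the two edge sums via $e\mapsto\bar{e}$. Your added remark on absolute convergence via (iv) is a harmless extra precaution.
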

\begin{proof}
Since $G$ is bipartite and $w(e)=w(\bar{e})$, using property (iii) we get 
$$
\sum_{v\in I}\frac{q+1}{w(v)}=\sum_{v\in I} \sum_{\substack{e\in E(G)\\ t(e)=v}} \frac{1}{w(e)}
= \sum_{\substack{e\in E(G)\\ t(e)\in I}}\frac{1}{w(e)}= \sum_{\substack{e\in E(G)\\ t(e)\in O}}\frac{1}{w(e)}=\sum_{v\in O}\frac{q+1}{w(v)}. 
$$
\end{proof}

Let $L_2(G)$ be the Hilbert space of complex valued functions on $V(G)$ with inner product 
$$
\langle f, g\rangle=\sum_{v\in V(G)} f(v)\overline{g(v)}w(v)^{-1}.
$$
The \textit{adjacency operator} $\delta: L_2(G)\to L_2(G)$ is defined by 
\begin{equation}\label{eqAdjOp}
\delta(f)(v)=\sum_{\substack{e\in E(G)\\ t(e)=v}} \frac{w(v)}{w(e)}f(o(e)). 
\end{equation}
This operator is Hermitian, since by expanding we have 
$$
\langle \delta f, g\rangle = \sum_{e\in E(G)} f(o(e))\overline{g(t(e))} w(e)^{-1} = \langle f, \delta g\rangle. 
$$
By the Schur test \cite[p. 30]{Conway}, $\delta$ is bounded by 
\begin{equation}\label{eqNormdelta}
\norm{\delta}\leq q+1.
\end{equation} 
(It is clear that $\delta$ is not compact if $G$ is infinite.) 

If $f$ is a constant function, i.e., $f(v)= f(v')$ for all $v,v'\in V(G)$, 
then $\delta f=(q+1)f$. If $f$ is an alternating function, i.e., $f(v)=-f(v')$ for all $v\in I, v'\in O$, 
%$$
%f(v)=
%\begin{cases}
%c & \text{if $v\in O$}\\
%-c & \text{if $v\in I$} 
%\end{cases}
%$$
then $\delta(f)=-(q+1)f$. The orthogonal complement in $L_2(G)$ of the subspace spanned by 
the constant and alternating functions is 
\begin{equation}\label{eqL20}
L_2^0(G)=\left\{f\in L_2(G)\ \bigg | \sum_{v\in I}f(v)w(v)^{-1}=\sum_{v\in O}f(v)w(v)^{-1}=0 \right\}. 
\end{equation}

Let 
$$
m=\inf_{\substack{f\in L_2^0(G)\\ \norm{f}=1}} \langle \delta f, f\rangle,\quad \quad M=\sup_{\substack{f\in L_2^0(G)\\ \norm{f}=1}} \langle \delta f, f\rangle. 
$$
Since $\delta$ is Hermitian and bounded, the spectrum of $\delta$ on $L_2^0(G)$ lies in the closed 
interval $[m, M]$ on the real axes; cf. Theorem 9.2-1 in \cite{Kreyszig}. Moreover, $m$ and $M$ are spectral values of $\delta$, and 
$$\norm{\delta|_{L_2^0(G)}}=\max(|m|, |M|);$$ cf. Theorems 9.2-2 and 9.2-3 in \cite{Kreyszig}. From \eqref{eqNormdelta}, we clearly have 
$$
\max(|m|, |M|) \leq q+1. 
$$
\begin{lem}\label{lemSym}
The spectrum of $\delta$ on $L_2^0(G)$ is symmetric with respect to zero. In particular, $m=-M$. 
\end{lem}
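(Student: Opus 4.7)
The plan is to exhibit a unitary involution of $L_2^0(G)$ which conjugates $\delta$ into $-\delta$; once that is done the symmetry of the spectrum and the identity $m=-M$ follow formally.

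First I would use the bipartite structure. Define $\sigma\colon L_2(G)\to L_2(G)$ by
$$
(\sigma f)(v)=\begin{cases} f(v) & v\in I,\\ -f(v) & v\in O.\end{cases}
$$
Two routine verifications: $\sigma$ is a self-adjoint unitary involution with respect to the inner product $\langle\cdot,\cdot\rangle$ (the weights $w(v)^{-1}$ play no role since $|\pm 1|=1$), and $\sigma$ preserves $L_2^0(G)$, because the defining equations of $L_2^0(G)$ in \eqref{eqL20} are imposed separately on $I$ and $O$ and are only multiplied by $\pm 1$.

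The key step is the identity $\sigma\delta\sigma=-\delta$. This is where the bipartiteness of $G$ is used: in the formula \eqref{eqAdjOp} every edge $e$ with $t(e)=v$ has $o(e)$ in the opposite part of the bipartition, so if $v\in I$ then $(\sigma f)(o(e))=-f(o(e))$, and if $v\in O$ then $(\sigma f)(o(e))=f(o(e))$ but the outer $\sigma$ then introduces a $-1$. In either case one picks up exactly one sign, yielding $(\sigma\delta\sigma f)(v)=-(\delta f)(v)$ for all $v\in V(G)$.

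Since $\sigma$ is a unitary operator on $L_2^0(G)$, conjugation by $\sigma$ preserves the spectrum; from $\sigma\delta\sigma^{-1}=-\delta$ we conclude $\mathrm{Sp}(\delta|_{L_2^0(G)})=-\mathrm{Sp}(\delta|_{L_2^0(G)})$, so the spectrum is symmetric about $0$. In particular, since $m$ and $M$ are the minimum and maximum of the spectrum on the real line (as $\delta$ is bounded and Hermitian), we get $m=-M$. Alternatively, the identity
$$
\langle\delta(\sigma f),\sigma f\rangle=\langle\sigma\delta\sigma f,f\rangle=-\langle\delta f,f\rangle,
$$
together with $\|\sigma f\|=\|f\|$, shows directly that the range of the quadratic form $f\mapsto\langle\delta f,f\rangle$ on the unit sphere of $L_2^0(G)$ is symmetric about $0$, again giving $m=-M$. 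There is no real obstacle here; the only thing to be a touch careful about is to record at the outset that $\sigma$ genuinely maps $L_2^0(G)$ into itself, so that the conjugation argument takes place in the right Hilbert space.
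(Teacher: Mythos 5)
Your proof is correct, and it takes a cleaner route than the paper's. The paper argues directly from the definition of the spectrum: it splits into the case where $\delta-\la I$ fails to be injective (where it decomposes an eigenfunction as $f=f_0+f_1$ with $f_0$ supported on $O$ and $f_1$ on $I$, and checks that $f_0-f_1$ is an eigenfunction for $-\la$) and the case where $\delta-\la I$ fails to be surjective (where a separate small contradiction argument is needed to show $g_1-g_0$ is not in the image of $\delta+\la I$). Your observation is that the map $f\mapsto f_0-f_1$ underlying both cases is (up to sign) a self-adjoint unitary involution $\sigma$ of $L_2(G)$ preserving $L_2^0(G)$ and satisfying $\sigma\delta\sigma=-\delta$; packaging it this way, the symmetry of the spectrum follows at once from invariance of the spectrum under unitary conjugation, with no case analysis on how bijectivity fails, and your alternative numerical-range identity $\langle\delta(\sigma f),\sigma f\rangle=-\langle\delta f,f\rangle$ even yields $m=-M$ without invoking the fact that $m$ and $M$ are spectral values. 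Both arguments use bipartiteness in exactly the same place (every edge crosses the partition, so $\delta$ anticommutes with $\sigma$); yours is the more conceptual and slightly more robust formulation, the paper's the more elementary and self-contained one. The only hypotheses you rely on — that $\sigma$ and $\delta$ both preserve $L_2^0(G)$ — are indeed available: you check the first, and the second is implicit in the paper's setup since $L_2^0(G)$ is the orthogonal complement of the $\delta$-invariant span of constant and alternating functions. No gaps.
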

\begin{proof} Let $\la$ be in the spectrum of $\delta|_{L_2^0(G)}$. By definition, this means that the 
linear operator $\delta-\la I$ is not bijective; cf. \cite[pp. 371-373]{Kreyszig}. This can happen in two ways, 
either $\delta-\la I$ is not injective, or $\delta-\la I$ is not surjective. 

First, assume $\delta-\la I$ is not injective. Then $\la$ is an eigenvalue of $\delta$. 
Let $\delta f=\la f$ be an eigenfunction. We 
write $f=f_0+f_1$, where $f_0$ is supported on $O$ and $f_1$ is supported on $I$; note that 
such decomposition is unique.  
Since $\delta f_0$ (resp. $\delta f_1$) is supported on $I$ (resp. $O$), we must have $\delta f_0=\la f_1$ 
and $\delta f_1=\la f_0$. Now 
$$
\delta (f_0-f_1)=\la f_1 - \la f_0 = -\la (f_0-f_1). 
$$
It is clear from \eqref{eqL20} that if $f\in L_2^0(G)$, then $f_0-f_1$ is also in this subspace. 
Thus, $-\la$ is an eigenvalue of the restriction of $\delta$ to $L_2^0(G)$. 

Next, assume $\delta-\la I$ is not surjective on $L_2^0(G)$. Suppose $g$ is not in the image of $\delta-\la I$. 
Write $g=g_0+g_1$ as earlier. We claim that $g_1-g_0\in L_2^0(G)$ is not in the image of $\delta+\la I$, 
and so $-\la$ is also in the spectrum of $\delta$. Assume the contrary: there exists $h=h_0+h_1$ 
such that $\delta h +\la h=g_1-g_0$. Then 
$$
\delta h_0 +\la h_1=g_1 \quad \text{and} \quad \delta h_1+\la h_0=-g_0. 
$$
This can be rewritten as 
$$
\delta h_0 -\la (-h_1)=g_1 \quad \text{and} \quad \delta (-h_1)-\la h_0=g_0. 
$$
This implies $(\delta-\la I)(h_0-h_1)=g$, which is a contradiction. 
\end{proof}

Let $G'$ be a finite connected subgraph of $G$ with the property that if $v, v'\in V(G)$ are in $V(G')$ 
then any edge of $G$ connecting $v$ and $v'$ is also an edge of $G'$. The weights 
of vertices and edges of $G'$ are the same as in $G$. Let 
$$
\delta'(f)(v)=\sum_{\substack{e\in E(G')\\ t(e)=v}} \frac{w(v)}{w(e)}f(o(e))
$$
be the adjacency operator of $G'$. Any function $f$ on $V(G')$ can be extended to a function $\tilde{f}\in L_2(G)$ by setting 
$$
\tilde{f}(v)=
\begin{cases}
f(v) & \text{if }v\in V(G')\\
0 & \text{if } v\not \in V(G'). 
\end{cases}
$$
Define an inner product on the $\C$-vector space of functions on $V(G')$ by $\langle f, g\rangle:=\langle \tilde{f}, \tilde{g}\rangle$. 
We denote this inner product space by $L_2(G')$. It is easy to see that 
$$
\langle \delta' f, g\rangle = \langle \delta \tilde{f}, \tilde{g}\rangle = \langle \tilde{f}, \delta \tilde{g}\rangle = 
\langle f, \delta' g\rangle.
$$
Hence the linear operator $\delta'$ on $L_2(G')$ is Hermitian. This implies that the eigenvalues 
$$
\la_1\leq \la_2\leq \cdots\leq \la_n
$$
of $\delta'$ are real; here $n=|V(G')|$. 

\begin{thm}\label{thmWeylHilb} We have 
$$
-(q+1)\leq \la_1, \quad m\leq \la_2, \quad \la_{n-1}\leq M, \quad \la_n\leq q+1. 
$$
\end{thm}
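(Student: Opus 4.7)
The outer bounds $\la_n\leq q+1$ and $\la_1\geq -(q+1)$ I would dispatch immediately. Extension by zero is an isometry $L_2(G')\hookrightarrow L_2(G)$ that intertwines the quadratic forms of $\delta'$ and $\delta$: for $f\in L_2(G')$ with extension $\tilde f$ one has $\|f\|_{L_2(G')}=\|\tilde f\|_{L_2(G)}$, and a direct computation using the hypothesis that every edge of $G$ connecting two vertices of $G'$ already lies in $G'$ shows $\langle \delta' f,f\rangle=\langle \delta \tilde f,\tilde f\rangle$. Combined with the bound $\|\delta\|\leq q+1$ from \eqref{eqNormdelta}, the Rayleigh quotient characterization of $\la_1$ and $\la_n$ yields the two outer bounds.

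For the two inner bounds the plan is to exploit the orthogonal decomposition
$$L_2(G)=\C\cdot \mathbf{1}\oplus \C\cdot \chi\oplus L_2^0(G),$$
where $\mathbf{1}$ is the constant function and $\chi$ is the $\pm 1$-valued alternating function ($+1$ on $I$, $-1$ on $O$). Both are square-integrable by Definition \ref{defnDiagram}(iv), they are orthogonal by Lemma \ref{lemI=O}, each has squared norm $W:=\sum_{v\in V(G)}w(v)^{-1}$, and they are $\delta$-eigenvectors with eigenvalues $q+1$ and $-(q+1)$ respectively; since $\delta$ is Hermitian, $L_2^0(G)$ is $\delta$-stable. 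Writing $\tilde f=\alpha\mathbf{1}+\beta\chi+g_0$ with $g_0\in L_2^0(G)$, orthogonality yields the key identities
$$\langle\delta' f,f\rangle=(q+1)\bigl(|\alpha|^2-|\beta|^2\bigr)W+\langle \delta g_0,g_0\rangle,\qquad \|f\|^2=\bigl(|\alpha|^2+|\beta|^2\bigr)W+\|g_0\|^2.$$

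To prove $\la_{n-1}\leq M$ I would apply Courant--Fischer in the form $\la_{n-1}=\min_{\dim V=n-1}\max_{f\in V\setminus 0}\langle \delta' f,f\rangle/\|f\|^2$ with $V\subset L_2(G')$ taken to be the codimension-one subspace cut out by $\alpha=0$, i.e.\ by the linear functional $f\mapsto \sum_{v\in V(G')}f(v)w(v)^{-1}$. On $V$ the displayed identities together with $\langle \delta g_0,g_0\rangle\leq M\|g_0\|^2$ reduce the desired inequality $\langle \delta' f,f\rangle\leq M\|f\|^2$ to the scalar inequality $-(q+1)\leq M$, which holds because Lemma \ref{lemSym} gives $M=-m\geq 0$. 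For $m\leq \la_2$ I would use the dual form $\la_2=\max_{\dim V=n-1}\min_{f\in V\setminus 0}\langle\delta' f,f\rangle/\|f\|^2$ and the codimension-one subspace defined by $\beta=0$, i.e.\ $\sum_{v\in V(G')\cap I}f(v)w(v)^{-1}=\sum_{v\in V(G')\cap O}f(v)w(v)^{-1}$; the symmetric calculation reduces matters to $q+1\geq m$, which follows from $m\leq 0$.

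The main subtlety is choosing the codimension of the test subspace correctly. The naive attempt is to force $\tilde f\in L_2^0(G)$ by imposing \emph{both} $\alpha=0$ and $\beta=0$, but that is a codimension-two condition and would only give bounds on $\la_3$ and $\la_{n-2}$; the sharper bounds of the theorem require that we spend only one dimension in each direction, and this is possible precisely because $\mathbf{1}$ and $\chi$ are genuine $\delta$-eigenvectors of opposite sign, so the coefficient one chooses not to kill still contributes with the correct sign relative to $M$ or $m$.
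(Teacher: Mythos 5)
Your proposal is correct; all the ingredients check out (the zero-extension identity $\langle\delta' f,f\rangle=\langle\delta\tilde f,\tilde f\rangle$ via the edge-saturation hypothesis on $G'$, the orthogonal decomposition $L_2(G)=\C\mathbf{1}\oplus\C\chi\oplus L_2^0(G)$ with $\mathbf{1},\chi$ genuine $\delta$-eigenfunctions of eigenvalue $\pm(q+1)$, and the two Courant--Fischer applications with the kernels of the functionals $\alpha$ and $\beta$, each of which is visibly a nonzero functional so its kernel has dimension $n-1$). The route differs from the paper's mainly in organization. The paper first exploits that $G'$ is bipartite: the folding argument of Lemma \ref{lemSym} applied to $\delta'$ gives $\la_1=-\la_n$ and $\la_2=-\la_{n-1}$, so only $-(q+1)\le\la_1$ and $m\le\la_2$ need proof; the latter is then obtained by a hands-on dimension count rather than by quoting min-max: the Rayleigh quotient of $\delta$ is shown to be $\ge m$ on the codimension-one subspace $H=L_2^0(G)\oplus\C\mathbf{1}$ (the same computation as your $\beta=0$ case), and $\le\la_2$ on the two-dimensional span of the zero-extensions of the $\la_1$- and $\la_2$-eigenvectors of $\delta'$, and these two subspaces meet nontrivially. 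Your version proves the two inner bounds independently and symmetrically, never using the spectral symmetry of $\delta'$ at all, and uses Lemma \ref{lemSym} only through the trivial facts $M\ge 0\ge m$; what the paper's route buys is that it avoids invoking the min-max theorem and gets two of the four inequalities for free from bipartiteness, while yours is self-contained on the inner bounds and would survive even for a finite subgraph whose induced structure you did not want to analyze beyond the decomposition identities. Your remark about why one must spend only one dimension in each direction is exactly the point of the paper's intersection argument, so the two proofs are, at bottom, the same dimension count packaged differently.
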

\begin{proof}
Since $G'$ is bipartite, the argument in the proof of Lemma \ref{lemSym} shows that 
the spectrum of $\delta'$ is symmetric with respect to zero. In particular, $\la_1=-\la_n$ and $\la_2=-\la_{n-1}$. 
Since we also have $m=-M$, the first two inequalities imply the other two. 

Let $\delta' f=\la_1 f$ with $\norm{f}=1$. Then $\norm{\tilde{f}}=1$ and 
$$
-(q+1)= \inf_{\substack{x\in L_2(G)\\ \norm{x}=1}} \langle \delta x, x\rangle \leq \langle \delta \tilde{f}, \tilde{f}\rangle 
= \langle \delta' f, f\rangle =\la_1. 
$$

Let $H=L_2^0(G)\oplus \C\mathbf{1}$ be the orthogonal complement of alternating functions; the second 
factor in $H$ is spanned by the constant functions. (The orthogonality of constant and alternating functions follows from 
Lemma \ref{lemI=O}.)
We claim that for any $0\neq h\in H$, 
we have $\langle \delta h, h\rangle/\langle h, h\rangle\geq m$.  
Indeed, write $h=h_1+h_2\in H$, where $h_1\in L_2^0(G)$ and $h_2\in  \C\mathbf{1}$. 
If $h_1=0$, then $\langle \delta h, h\rangle/\langle h, h\rangle = (q+1)> m$. If $h_1\neq 0$, then we have 
$$
\frac{\langle \delta h, h\rangle}{\langle h, h\rangle} = \frac{\langle \delta h_1, h_1\rangle +(q+1)\langle h_2, h_2\rangle}
{\langle h_1, h_1\rangle +\langle h_2, h_2\rangle}\geq \frac{\langle \delta h_1, h_1\rangle }
{\langle h_1, h_1\rangle }\geq m,
$$ 
where the first inequality follows from the fact that $\langle \delta h_1, h_1\rangle/ \langle h_1, h_1\rangle< (q+1)$.

Now let $\delta' g=\la_2 g$ with $\norm{g}=1$. Let $H'$ be the subspace of $L_2(G)$ 
spanned by $\tilde{f}$ and $\tilde{g}$. We claim that for any $0\neq x\in H'$, we have 
$\langle \delta x, x\rangle/ \langle x, x\rangle \leq \la_2$.   
Write $x=y+z$ where $y=a\tilde{f}$ and $z=b\tilde{g}$. Since $\delta'$ is Hermitian, we have $y\perp z$, and 
$$
\frac{\langle \delta x, x\rangle}{\langle x, x\rangle} = \frac{\la_1\langle y, y\rangle+ \la_2\langle z, z\rangle}{\langle y, y\rangle+\langle z, z\rangle}  
\leq \la_2. 
$$

Consider the orthogonal projection $P:L_2(G)\to \C$ onto the $1$-dimensional space spanned by the alternating functions. 
The null-space of $P$ is $H$. On the other hand, since $H'$ is $2$-dimensional, there is a non-zero vector $x\in H\cap H'$. 
From the previous two paragraphs, we get 
$m\leq \langle \delta x, x\rangle/\langle x, x\rangle \leq \la_2$, as was required to show. 
\end{proof}

\begin{thm}[Weyl's inequalities]\label{thmWeylI} 
Let $A$ and $B$ be $n\times n$ Hermitian matrices, and $C=A+B$. Let the eigenvalues of $A, B, C$ form increasing sequences: 
$$
\alpha_1\leq \cdots \leq \alpha_n, \quad  \beta_1\leq \cdots \leq \beta_n, \quad  \gamma_1\leq \cdots \leq \gamma_n. 
$$
Then 
\begin{enumerate}
\item[(i)] $\gamma_i\geq \alpha_j+\beta_{i-j+1}$ for $i\geq j$; 
\item[(ii)] $\gamma_i\leq \alpha_j+\beta_{i-j+n}$ for $i\leq j$. 
\end{enumerate}
\end{thm}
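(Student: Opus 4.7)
The plan is to deduce both inequalities from the Courant--Fischer min-max characterization of eigenvalues of Hermitian matrices: for any $n \times n$ Hermitian matrix $A$ with eigenvalues $\alpha_1 \leq \cdots \leq \alpha_n$ and corresponding orthonormal eigenbasis, one has
$$
\alpha_k = \min_{\dim V = k}\ \max_{0 \neq x \in V}\frac{\langle Ax, x\rangle}{\langle x, x\rangle}
        = \max_{\dim V = n-k+1}\ \min_{0 \neq x \in V}\frac{\langle Ax, x\rangle}{\langle x, x\rangle}.
$$
Both expressions are standard (they are attained on spans of consecutive eigenvectors), so I would simply quote them.

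For (i), fix $i \geq j$. Let $V_A \subset \mathbb{C}^n$ be the span of eigenvectors of $A$ corresponding to $\alpha_j, \alpha_{j+1}, \dots, \alpha_n$; then $\dim V_A = n - j + 1$ and $\langle Ax, x\rangle \geq \alpha_j \|x\|^2$ for all $x \in V_A$. Similarly let $V_B$ be the span of eigenvectors of $B$ corresponding to $\beta_{i-j+1}, \dots, \beta_n$; then $\dim V_B = n - (i-j+1) + 1 = n - i + j$ and $\langle Bx, x\rangle \geq \beta_{i-j+1}\|x\|^2$ on $V_B$. The dimension count
$$
\dim(V_A \cap V_B) \geq \dim V_A + \dim V_B - n = n - i + 1
$$
produces a subspace $W \subseteq V_A \cap V_B$ of dimension $n - i + 1$ on which $\langle Cx, x\rangle = \langle Ax, x\rangle + \langle Bx, x\rangle \geq (\alpha_j + \beta_{i-j+1})\|x\|^2$. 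Applying the max-min form of Courant--Fischer to $C$ with subspace dimension $n-i+1$ then gives $\gamma_i \geq \alpha_j + \beta_{i-j+1}$.

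For (ii), I would apply (i) to the triple $(-A, -B, -C = (-A)+(-B))$. Since the $k$-th smallest eigenvalue of $-A$ is $-\alpha_{n-k+1}$, and analogously for $-B$ and $-C$, the inequality in (i), with indices $i' \geq j'$, reads $-\gamma_{n-i'+1} \geq -\alpha_{n-j'+1} - \beta_{n-(i'-j'+1)+1}$. Substituting $i = n-i'+1$ and $j = n-j'+1$ converts the constraint $i' \geq j'$ into $i \leq j$ and yields $\gamma_i \leq \alpha_j + \beta_{i-j+n}$.

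There is no real obstacle: the only subtle point is keeping the index arithmetic consistent in the dimension count in (i) and in the change of variables in (ii), but both are bookkeeping rather than a genuine difficulty.
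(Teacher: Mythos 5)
Your argument is correct: the min-max and max-min forms of Courant--Fischer are quoted accurately, the dimension count $\dim(V_A\cap V_B)\geq n-i+1$ is right, and the reduction of (ii) to (i) by passing to $-A,-B,-C$ with the substitution $i=n-i'+1$, $j=n-j'+1$ (so that $i'\geq j'$ becomes $i\leq j$ and $\beta_{n-i'+j'}=\beta_{i-j+n}$) works out exactly as you state. Note that the paper itself gives no argument here --- it simply cites Theorem 34.2.1 of Prasolov's book --- so your proposal supplies the standard textbook proof that the citation points to, rather than diverging from anything in the paper.
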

\begin{proof}
See Theorem 34.2.1 in \cite{Prasolov}. 
\end{proof}

\begin{defn}\label{defnDegVer}
A vertex $v\in V(G')$ is a \textit{boundary vertex} if not all vertices adjacent to $v$ in $G$ are in $G'$. 
The \textit{degree} $v\in V(G')$ is 
$$
\deg_{G'}(v)=\sum_{\substack{e\in E(G')\\ t(e)=v}} \frac{w(v)}{w(e)}. 
$$
The degree of any $v\in V(G')$ is non-zero since $G'$ is connected. 
If $v$ is not a boundary vertex, then by an earlier assumption $\deg_{G'}(v)=q+1$. 
\end{defn}

Enumerate the vertices $\{v_1, \dots, v_n\}$ of $G'$, and consider the set of these vertices as a basis for 
$C_0(G', \C)$. Denote 
$$
d_i=\deg_{G'}(v_i), \quad 1\leq i\leq n.  
$$
We assume that the enumeration is done so that $d_1\leq d_2\leq \cdots\leq d_n$. Let $\Delta$ 
be the Laplacian of $G'$ defined in Definition \ref{def1}. Let 
$$
0=\gamma_1\leq \gamma_2\leq \cdots \leq \gamma_n
$$
be the eigenvalues of $\Delta$. 

\begin{thm}\label{thmMTFA} We have 
$$
\gamma_2 \geq d_1-M,\qquad 
\gamma_{n-1}\leq (q+1)+M, \qquad \gamma_n \leq 2(q+1). 
$$
\end{thm}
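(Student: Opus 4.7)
The plan is to identify the spectrum of $\Delta$ with that of $L := D-\delta'$ acting on $L_2(G')$, where $D$ is the diagonal operator $D\mathbf{1}_v = d_v\mathbf{1}_v$, and then extract all three inequalities from Weyl's inequalities (Theorem \ref{thmWeylI}) applied to this decomposition. Concretely, consider the $\C$-linear map $\Phi: C_0(G',\C)\to L_2(G')$ sending $v\mapsto w(v)\mathbf{1}_v$. A direct check using $\langle v_i,v_j\rangle = w(v_i)\delta_{ij}$ versus the $L_2(G')$ inner product shows that $\Phi$ is an isometry, and a short computation relying on $w(e)=w(\bar e)$ shows $\Phi\Delta=L\Phi$. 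Hence the ordered eigenvalues of $\Delta$ are exactly those of $L$.

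Next, apply Theorem \ref{thmWeylI} with $A=D$ and $B=-\delta'$. The eigenvalues of $D$ in nondecreasing order are $d_1\leq\cdots\leq d_n$, each $\leq q+1$ since $G$ is a $(q+1)$-regular diagram and $\deg_{G'}(v)\leq q+1$ (with equality unless $v$ is a boundary vertex, per Definition \ref{defnDegVer}). The eigenvalues of $-\delta'$ in nondecreasing order are $-\lambda_n\leq\cdots\leq-\lambda_1$; from Theorem \ref{thmWeylHilb} combined with $m=-M$ (Lemma \ref{lemSym}) we have $-\lambda_n\geq-(q+1)$, $-\lambda_{n-1}\geq-M$, $-\lambda_2\leq M$, and $-\lambda_1\leq q+1$. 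Part (i) of Theorem \ref{thmWeylI} with $i=2, j=1$ gives $\gamma_2\geq d_1+(-\lambda_{n-1})\geq d_1-M$. Part (ii) with $i=n-1, j=n$ gives $\gamma_{n-1}\leq d_n+(-\lambda_2)\leq (q+1)+M$. Part (ii) with $i=j=n$ gives $\gamma_n\leq d_n+(-\lambda_1)\leq 2(q+1)$.

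The principal technical obstacle is the opening step: verifying the intertwining $\Phi\Delta=L\Phi$. The two operators are built from different conventions---$\Delta$ via the adjoint $\partial^\ast$ with respect to the weighted bilinear pairings on $C_0$ and $C_1$, and $L$ via pointwise operations on $L_2(G')$ with the inverse-weight inner product---so one must carefully track the weights $w(v)$ and $w(e)$ at each vertex and edge. The crucial point, however, reduces entirely to $w(e)=w(\bar e)$: this symmetry makes the two a priori different off-diagonal matrix entries of $\Delta$ and $L$ agree after application of $\Phi$. Once this identification is established, the theorem is a formal consequence of Weyl's inequalities together with the spectral bounds of Theorem \ref{thmWeylHilb}.
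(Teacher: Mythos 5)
Your proposal is correct and takes essentially the same route as the paper: the decomposition $\Delta = D-\delta'$, Weyl's inequalities applied with $A=D$ and $B=-\delta'$, and the bounds of Theorem \ref{thmWeylHilb} together with $m=-M$, with exactly the same choice of indices. The only difference is that you verify explicitly, via the isometry $v\mapsto w(v)\mathbf{1}_v$, the identification of the chain-level Laplacian with $D-\delta'$ on $L_2(G')$, a point the paper simply asserts.
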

\begin{proof} We have $\Delta = D - \delta'$, 
where $D$ is the diagonal matrix $\mathrm{diag}(d_i)_{1\leq i\leq n}$.  
The operators $\delta'$ and $D$ are Hermitian on $L_2(G')$, so Weyl's inequalities,  
combined with the bounds of Theorem \ref{thmWeylHilb}, yield 
\begin{align*}
\gamma_2 &\geq d_1-\la_{n-1}\geq d_1-M,\\ 
\gamma_{n-1} &\leq d_n-\la_{2}\leq d_n-m\leq (q+1)+M, \\
\gamma_n &\leq d_n-\la_{1}\leq d_n+(q+1)\leq 2(q+1). 
\end{align*}
\end{proof}

%------------------------------------------------------------------------------------------------

\subsection{Ramanujan diagrams}\label{ssRD} 
%A \textit{half-line} is a graph with set of vertices $\{v_n\ |\ n\geq 0\}$ such that for any $n\geq 1$ 
%the vertices adjacent to $v_n$ are $v_{n-1}$ and $v_{n+1}$. We denote the edge 
%connecting $v_n$ to $v_{n+1}$ by $e_n$. 
We say that $G$ is a \textit{Ramanujan diagram}, if it is a $(q+1)$-regular diagram in the sense of Definition \ref{defnDiagram}, 
and the following extra conditions hold:
\begin{enumerate}
\item[(v)] $G$ is a union of a finite connected graph $G'$ and a finite number of half-lines $C_1, \dots, C_s$, called \textit{cusps}, so that 
$$E(G)=E(G')\cup E(C_1)\cup \cdots \cup E(C_s).$$ 
\item[(vi)] $C_j\cap C_k=\emptyset$ for any $j\neq k$.  
\item[(vii)] If $\{v_n^j\}_{n\geq 0}$ are the vertices of $C_j$ ($1\leq j\leq s$), then $V(G')\cap V(C_j)=\{v_0^j\}$. 
\begin{figure} [h]
\begin{tikzpicture}[->, >=stealth', semithick, node distance=2cm, inner sep=.5mm, vertex/.style={circle, fill=black}]
\node[vertex] (0) [label=below:$v_0^j$]{};
  \node[vertex] (1) [right of=0, label=below:$v_1^j$] {}; 
  \node[vertex] (2) [right of=1, label=below:$v_2^j$] {};
  \node[vertex] (3) [right of=2, label=below:$v_3^j$] {};
  \node[] (4) [right of=3] {};
\path[] (0) edge node[above]{$e_0^j$}  (1) (1) edge node[above]{$e_1^j$} (2) (2) edge node[above]{$e_2^j$} (3) (3) edge[dashed] (4);   
\end{tikzpicture}
%\caption{}\label{Fig1}
\end{figure}
\item[(viii)] For $1\leq j\leq s$ and $n\geq 0$, let $e_n^j$ be the edge with origin $v_n^j$ and terminus $v_{n+1}^j$. Then 
\begin{align*}
w(v_n^j)/w(e_n^j) &=1,\\  w(v_{n+1}^j)/w(e_n^j) &=q.  
\end{align*}
\item[(ix)] $M\leq 2\sqrt{q}$. 
\end{enumerate} 

\begin{lem}\label{lemCuspform}
Let $G$ be a Ramanujan diagram as above. If $f\in L_2^0(G)$ is an eigenfunction 
for $\delta$, then $f$ vanishes on all $C_j$, i.e., $f(v_n^j)=0$ for $1\leq j\leq s$ and $n\geq 0$. 
This implies that $f$ is an eigenfunction for $\delta'$ with the same eigenvalue, and  
the discrete spectrum of $\delta$ is contained in the spectrum of $\delta'$. 
\end{lem}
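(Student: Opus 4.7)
The plan is to combine three ingredients: the eigenvalue equation along each cusp (which reduces to a two-term linear recurrence), the exponential growth of the vertex weights $w(v_n^j)$, and the bound $|\la|\le 2\sqrt q$ on the eigenvalue forced by $f\in L_2^0(G)$ together with condition (ix).

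First I would write out $\delta f=\la f$ at an interior cusp vertex $v_n^j$ with $n\ge 1$. Exactly two edges have $v_n^j$ as terminus, namely $e_{n-1}^j$ and $\bar e_n^j$, and from condition (viii) their weight ratios with $v_n^j$ are $q$ and $1$ respectively. Hence
$$
q\,f(v_{n-1}^j)+f(v_{n+1}^j)=\la\,f(v_n^j),\qquad n\ge 1,
$$
a linear recurrence whose characteristic polynomial $x^2-\la x+q$ has roots $x_\pm$ with $x_+x_-=q$. Since Lemma \ref{lemSym} gives $m=-M$ and assumption (ix) gives $M\le 2\sqrt q$, the eigenvalue satisfies $|\la|\le 2\sqrt q$, so $|x_\pm|=\sqrt q$ (complex conjugates on the circle of radius $\sqrt q$, or a double root at $\pm\sqrt q$). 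Simultaneously (viii) forces $w(v_{n+1}^j)=q\,w(v_n^j)$, and inductively $w(v_n^j)=q^n\,w(v_0^j)$.

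The heart of the argument is square-integrability. Writing the general solution as $f(v_n^j)=A x_+^n+B x_-^n$ in the generic case (or $(A+Bn)(\pm\sqrt q)^n$ in the degenerate case $|\la|=2\sqrt q$), the contribution of the cusp $C_j$ to $\norm{f}^2$ is
$$
\sum_{n\ge 0}\frac{|f(v_n^j)|^2}{w(v_n^j)}=\frac{1}{w(v_0^j)}\sum_{n\ge 0}\frac{|f(v_n^j)|^2}{q^n}.
$$
Since $|x_\pm|^2=q$, the summand $|f(v_n^j)|^2/q^n$ equals $|A|^2+|B|^2+2\,\mathrm{Re}(A\bar B\, e^{2in\theta})$ when $x_\pm=\sqrt q\,e^{\pm i\theta}$ are distinct, and equals $|A+Bn|^2$ when they coincide. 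In both cases the sequence fails to tend to zero unless $A=B=0$, so $\norm{f}<\infty$ forces $f\equiv 0$ on every cusp $C_j$.

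Applying this to each cusp shows $f$ is supported on $V(G')$. To conclude that $f|_{V(G')}$ is a $\delta'$-eigenfunction with eigenvalue $\la$, I would check $\delta f(v)=\la f(v)$ at each $v\in V(G')$: for an interior vertex of $G'$ all neighbors lie in $V(G')$, so the equation reduces to $\delta'(f|_{V(G')})(v)=\la f(v)$, while at a boundary vertex $v_0^j$ the only neighbor in $G\setminus G'$ is $v_1^j$, and $f(v_1^j)=0$ kills the missing contribution. Containment of the discrete spectrum of $\delta$ in that of $\delta'$ follows at once, since $f\neq 0$ forces $f|_{V(G')}\neq 0$. The only technical nuisance is orientation and weight bookkeeping when setting up the recurrence; once that is done, the square-integrability dichotomy is essentially automatic, and the crucial point is really the matching between the weight growth rate $q$ and the modulus $\sqrt q$ of the characteristic roots which is guaranteed by hypothesis (ix).
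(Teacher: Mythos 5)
Your argument is correct and is essentially the paper's own proof: both set up the recurrence $\lambda f(v_n^j)=q f(v_{n-1}^j)+f(v_{n+1}^j)$ along each cusp, use (ix) to force the roots of $x^2-\lambda x+q$ to have modulus $\sqrt{q}$, and then use the weight growth $w(v_{n+1}^j)=q\,w(v_n^j)$ so that square-integrability kills the solution. Your explicit verification that the truncation is a $\delta'$-eigenfunction at boundary vertices just spells out what the paper leaves implicit.
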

\begin{proof} This observation appears in \cite[p. 178]{EfratInvent} (see also \cite[$\S$3]{Efrat}). 
Let $f\in L_2^0(G)$. Fix some cusp, and, to simplify the notation, denote its vertices by $v_n$, and $f(n):=f(v_n)$.  
The condition (viii) implies that $w(v_{n+1})/w(v_n)=q$, so for $f$ to be in $L_2(G)$ we must have 
$f(n)=o(q^{n/2})$ as $n\to \infty$. Assume $\delta f=\la f$. Then for $n\geq 1$ 
$$
\la f(n)=(\delta f)(n) = q f(n-1)+f(n+1). 
$$
Let $x_1, x_2$ be the roots of $x^2-\la x+q$. The above linear recurrence can be solved as $f(n)=a x_1^n+bx_2^n$ if $x_1\neq x_2$, or  
$f(n)=(a+nb) x_1^n$ if $x_1=x_2$ (here $a$ and $b$ are determined by $f(0)$ and $f(1)$). 
By condition (ix), the eigenvalue $\la$ satisfies $|\la|\leq 2\sqrt{q}$, so the roots are either $x_1=x_2=\pm\sqrt{q}$, 
or $x_1=\overline{x_2}$ are complex conjugate of absolute value $\sqrt{q}$. Unless $f(n)\equiv 0$, 
this implies that $f(n)/q^{n/2}$ does not tend to $0$ as $n\to \infty$, a contradiction. 
\end{proof}

\begin{figure} %[h]
\begin{tikzpicture}[>=stealth', scale=2, semithick, inner sep=.5mm, vertex/.style={circle, fill=black}]
\node[vertex] (01) [label=left:$a$] at (0, .5) {};
\node[vertex] (10) [label=below:$b$] at (1, 0) {};
\node[vertex] (11) [label=above:$v_1'$] at (1, .5) {};
\node[vertex] (21) [label=above:$v_2'$] at (2, .5) {};
\node[vertex] (02) [label=above:$v_{\infty, 0}$] at (0, 1) {};
\node[vertex] (12) [label=above:$v_{\infty, 1}$] at (1, 1) {};
\node[vertex] (22) [label=above:$v_{\infty, 2}$]  at (2, 1) {};

\path[]
(01) edge[dashed]  (10)
(01) edge  (11)
(11) edge  (21)
(02) edge  (11)
(02) edge  (12)
(10) edge  (21)
(12) edge  (22);
\draw [->] (2,.5) -- (3,.5);
\draw [->] (2,1) -- (3,1);
\end{tikzpicture}
\caption{}\label{Fig2}
\end{figure}

\begin{example}\label{exampled3} Consider the diagram in Figure \ref{Fig2}. 
The dashed edge between the vertices $a$ and $b$ indicates that they are connected by 
$q$ edges, and the arrows indicate the cusps. 
The weights of $a$ and $b$ are $1$, so all edges having $a$ or $b$ as an end-vertex have weight $1$. 
The weights of $v_1'$ and $v_2'$ are $q-1$; the edge connecting $v_1'$ and $v_2'$ also has weight $q-1$.  
%For the choice of notation and weights, see $\S$\ref{ss42.NV}. 
As we will explain later, $G$ is Ramanujan (see Remark \ref{remDeg3}). 

The graph $G'$ is the graph formed by the vertices $v_1', v_2', a, b$. The characteristic polynomial of $\delta'$ 
is 
$$
x^4-((q+1)^2-2)x^2+1. 
$$
Two of its roots have absolute value $< 2\sqrt{q}$, and the other two have absolute value lying in the interval $(2\sqrt{q}, q+1)$. 
Hence the spectrum of $G'$ is not in the spectrum of $G$. Moreover, it is easy to see that a function which vanishes on the cusps 
and is an eigenfunction of $\delta$ must be identically $0$. Thus, the discrete spectrum of $G$ is empty. 
\end{example}

\begin{rem}
 Given a diagram $G$, an eigenfunction $f\in L_2(G)$ of $\delta$ with finite support,  
i.e., $f(v)= 0$ for all but finitely many $v\in V(G)$, is called a \textit{cusp form}; cf. \cite[p. 177]{EfratInvent}. 
The point of Lemma \ref{lemCuspform} is that in case of a Ramanujan diagram $G$ the only 
eigenfunctions of $\delta$ in $L_2^0(G)$ are the cusp forms. In \cite{EfratInvent}, Efrat 
constructs examples of infinite diagrams which satisfy properties (i)-(viii), have no non-trivial cusp forms, 
but have lots of $\delta$-eigenfunctions in $L_2^0(G)$.    
\end{rem}

%------------------------------------------------------------------------------------------------

\section{Drinfeld diagrams}\label{sDD} 

\subsection{Ramanujan property} Let $\sT$ be the Bruhat-Tits tree of $\PGL_2(\Fi)$ 
as in $\S$\ref{ssHCHO}. Let $\G:=\GL_2(A)$. Let 
$\G'$ be a congruence subgroup of $\G$. We consider 
$\G'\bs \sT$ as a weighted infinite graph, with weights defined by \eqref{eq_weightsGn}. 

\begin{thm}\label{thmRDD}
The quotient graph $\G'\bs\sT$ is a $(q+1)$-regular Ramanujan diagram. 
\end{thm}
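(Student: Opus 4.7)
The plan is to verify each of the nine defining properties of a Ramanujan diagram for $\Gamma'\bs\sT$ in turn. Properties (i)--(iii) and (v)--(viii) are structural and follow from the standard theory of the action of $\GL_2(A)$ on $\sT$ (as developed in Chapter II of \cite{SerreT}), while (iv) is a covolume finiteness statement and (ix) is the arithmetic input, equivalent to the Ramanujan--Petersson bound for $\GL_2$ over $F$ proved by Drinfeld.

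First I would check (i)--(iii). The tree $\sT$ is bipartite via the parity of the valuation of the determinant; since every element of $\Gamma'\subset\GL_2(A)$ has determinant in $\F_q^\times$ (of valuation $0$), this bipartition descends to $\Gamma'\bs\sT$. Property (ii) is immediate from $\Gamma'_e\subset\Gamma'_{t(e)}$. For (iii), each vertex of $\sT$ has exactly $q+1$ edges terminating at it; grouping these into $\Gamma'_v$-orbits and using the orbit--stabilizer formula gives
$$\sum_{t(\tilde e)=\tilde v}\frac{w(\tilde v)}{w(\tilde e)}=\sum_{t(\tilde e)=\tilde v}[\Gamma'_v:\Gamma'_e]=q+1,$$
after dividing numerator and denominator by $|Z(\F_q)\cap\Gamma'|$.

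Next I would invoke Serre's quotient theorem (\cite{SerreT}, II.1.6 and II.2.3): $\G\bs\sT$ is a half-line, and for a finite index subgroup $\Gamma'\subset\G$ the quotient $\Gamma'\bs\sT$ is a finite graph $G'$ with finitely many half-lines $C_1,\dots,C_s$ attached at single vertices. This gives (v), (vi), (vii). For (viii), I would use the explicit description of the cusp stabilizers: along the standard cusp, the stabilizer of $v_n$ in $\G$ is generated by $Z(\F_q)$, the group of units, and the upper-triangular matrices with entry of degree $\le n$; a direct index computation shows $w(v_{n+1})/w(v_n)=q$ together with $w(v_n)/w(e_n)=1$. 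The finiteness in (iv) then follows by splitting the sum between $G'$ and the cusps, where on each cusp $\sum_n w(v_n^j)^{-1}$ is a geometric series in $q^{-1}$.

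The main obstacle is (ix), the spectral bound $M\le 2\sqrt q$ on $L_2^0(\Gamma'\bs\sT)$. The idea is to identify $L_2^0(\Gamma'\bs\sT)$ with the orthogonal complement of the residual (constant and alternating) part inside a space of unramified-at-$\infty$ automorphic functions on $\GL_2(F)\bs\GL_2(\A_F)$, so that the adjacency operator $\delta$ becomes (up to normalization) the Hecke operator at $\infty$. On the cuspidal subspace, the Ramanujan--Petersson conjecture for $\GL_2$ over function fields, established by Drinfeld, gives eigenvalue bound $|\lambda|\le 2\sqrt q$; on the residual complement inside $L_2^0$ one checks by hand that the spectrum lies in $[-2\sqrt q, 2\sqrt q]$ because the relevant Eisenstein-type contributions are killed by the orthogonality conditions defining $L_2^0$. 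Combining these gives $M\le 2\sqrt q$, completing the proof.
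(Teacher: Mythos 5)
Your verification of properties (i)--(viii) is sound and runs along essentially the same lines as the paper: the paper also gets bipartiteness and the $(q+1)$-regularity from the covering $\G'\bs\sT\to\G\bs\sT$ of the half-line together with orbit--stabilizer counting, and it establishes the cusp structure (v)--(viii) via the explicit stabilizers $G_i$ of the standard vertices $v_i$ (your determinant-parity argument for (i) and your geometric-series argument for (iv) are harmless variants; the paper instead quotes Serre's exact covolume formula \eqref{eqVol}).

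The genuine gap is in your treatment of (ix). You split $L_2^0(\G'\bs\sT)$ into a cuspidal part, where Drinfeld's Ramanujan--Petersson bound applies, and a complement on which you claim the spectrum is controlled because ``the relevant Eisenstein-type contributions are killed by the orthogonality conditions defining $L_2^0$.'' This is not correct. The orthogonality conditions in \eqref{eqL20} remove only the residual spectrum, i.e.\ the constants and the alternating functions (eigenvalues $\pm(q+1)$). They do not remove the continuous spectrum coming from Eisenstein series: that spectrum lives inside $L_2^0(\G'\bs\sT)$ (it is not realized by $L_2$-eigenfunctions at all, so no pointwise orthogonality condition can ``kill'' it), and the bound $M\le 2\sqrt{q}$ requires the separate analytic fact that this continuous spectrum is confined to $[-2\sqrt{q},2\sqrt{q}]$ --- equivalently, that the Eisenstein series contribute no complementary-series (non-tempered) spectrum beyond the constants and alternating functions. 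This is exactly the subtlety the paper warns about in its remark on the flawed argument in \cite{CKK}: the spectrum of $\delta$ on $L_2^0$ is not just the discrete cuspidal spectrum. The paper handles (ix) by citing Morgenstern's Theorem 2.1, which carries out both the Eisenstein-series analysis and the appeal to Drinfeld's theorem (and the paper notes the argument extends from $\G(\fn)$ to general congruence subgroups). To repair your proof you would need to replace the ``killed by orthogonality'' step with an actual bound on the continuous spectrum, or simply cite Morgenstern as the paper does.
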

\begin{proof}
For $i\in \Z$, let $v_i\in V(\sT)$ be the vertex
represented by the matrix $\begin{pmatrix} \varpi_\infty^{-i} & 0 \\ 0 &1\end{pmatrix}$; it is easy to see 
that $v_i$ is adjacent to $v_{i+1}$.  
Let $e_i$ be the edge with $o(e_i)=v_i$, $t(e_i)=v_{i+1}$.
The subgraph formed by the $v_i$ and $e_i$ with $i\geq 0$ maps isomorphically 
onto the quotient graph $\G\bs \sT$; cf. \cite[p. 111]{SerreT}. 
Each orbit of the action of $\G$ on $V(\sT)$ splits into a disjoint union of orbits of $\G'$, and similarly 
for $E(\sT)$. This gives a natural covering 
$$
\pi: \G'\bs \sT\to \G\bs\sT. 
$$
Since $\G\bs\sT$ is bipartite, so is $\G'\bs\sT$, with the partition of vertices of $\G'\bs\sT$ induced by $\pi^{-1}$. 
Since $\G'_{\tilde{e}}$ is a subgroup of $\G'_{t(\tilde{e})}$, we see that $w(e)$ divides $w(t(e))$. 
Let $v$ be a fixed vertex of $\G'\bs \sT$, and $\tilde{v}$ be some vertex in $\sT$ mapping to $v$. 
The group $\G'_{\tilde{v}}$ acts on the set $\{\tilde{e}\ |\ t(\tilde{e})=\tilde{v}\}$, 
which has cardinality $(q+1)$. The orbit of a given edge $\tilde{e}$ under the action of $\G'_{\tilde{v}}$ 
has length $w(v)/w(e)$, where $e$ is the image of $\tilde{e}$ in $\G'\bs\sT$. This implies 
$\sum_{t(e)=v} w(v)/w(e)=q+1$. Next, according to \cite[p. 110]{SerreT} 
\begin{equation}\label{eqVol}
\sum_{v\in V(\G'\bs \sT)}w(v)^{-1}=\frac{2[\G:\G']\cdot |Z(\Fi)\cap \G'|}{(q^2-1)(q-1)^2}. 
\end{equation}
Note that $Z(\Fi)\cap \G'$ is a subgroup of $Z(\F_q)\cong \F_q^\times$. 
In particular, the series on the left converges. Overall, what we proved so far implies 
that $\G'\bs \sT$ is a $(q+1)$-regular diagram. 

We say that $e\in E(\G'\bs \sT)$ (resp. $v\in V(\G'\bs \sT)$) is of \textit{type $i$} if 
$\pi(e)=e_i$ (resp. $\pi(v)=v_i$). Denote  
\begin{align*}
V_i &=\{v\in V(\G'\bs\sT)\ |\ \mathrm{type}(v)=i\}, \\
E_i &=\{e\in V(\G'\bs\sT)\ |\ \mathrm{type}(e)=i\}. 
\end{align*}
Let 
\begin{align*}
G_0 &:=\GL_2(\F_q)\hookrightarrow \G \\
G_i &:=\left\{\begin{pmatrix} a & b \\ 0 & d\end{pmatrix}\in \G\ \big| \deg b\leq i \right\}\quad (i\geq 1). 
\end{align*}
For $i\geq 0$, $G_i$ is the stabilizer of $v_i$ in $\G$, and $G_i\cap G_{i+1}$ is the stabilizer of $e_i$; cf. \cite{GN}.  
The groups $G_i$ act on the set of cosets $\G/\G'$ from the left, and the orbits of various $G_i$ or $G_i\cap G_{i+1}$ 
correspond to the vertices or edges of $\G'\bs\sT$ of type $i$: 
\begin{align*} 
G_i\bs \G/\G' &\cong V_i \\ 
(G_i\cap G_{i+1})\bs \G/\G' &\cong E_i. 
\end{align*}

It is easy to see that (cf. \cite[p. 692]{GN})
\begin{equation}
 o: E_i\to V_i \text{ is bijective for }i\geq 1, 
 \end{equation}
 and, because $G_i\cap G_{i+1}=G_i$ for $i\geq 1$,
 \begin{equation}\label{eq33}
 w(e)=w(o(e)) \text{ for }e\in E_i, i\geq 1.
\end{equation}

Let $\fn\lhd A$ be of minimal degree $d$ such that $\G(\fn)$ is contained in $\G'$. Since 
$G_i$ acts on $\G/\G'$ via $p_\fn: G_i\to \G/\G(\fn)$ and $p_\fn(G_{d-1})=p_\fn(G_d)=\dots$, 
the subgraph of $\G'\bs \sT$ consisting of edges of type $\geq d-1$ is a disjoint union of half-lines. 
Since $|G_{i+1}/G_i|=q$ for $i\geq 1$, it is also clear that $w(t(e))=q\cdot w(o(e))$ for edges of type $\geq d-1$.  

To prove that $\G'\bs\sT$ is Ramanujan it remains to show that this graph has property (ix) 
in the definition of Ramanujan diagram. This is a rather deep fact, closely related to the theory of 
Eisenstein series and the Ramanujan--Petersson conjecture for automorphic representations of $\GL(2)$ over function fields   
proved by Drinfeld. The details can be found in \cite[Thm. 2.1]{Morgenstern}. (Although in \cite{Morgenstern}
it is assumed that $\G'=\G(\fn)$ is the principal congruence subgroup, the proof 
works also for other congruence subgroups.)
\end{proof}

\begin{rem}
\begin{enumerate} 
\item The sum \eqref{eqVol} can be interpreted as the volume of $\GL_2(\Fi)/\G'$ with respect to an  
appropriately normalized Haar measure on $\GL_2(\Fi)$; cf. \cite[p. 110]{SerreT}. 

\item The automorphic representations that arise at the end of the proof of Theorem \ref{thmRDD} 
are spherical at $\infty$, so these are not the automorphic representations that arise from Drinfeld modular curves 
which are special at $\infty$. 
\end{enumerate}
\end{rem}

%------------------------------------------------------------------------------------

\subsection{Number of vertices}\label{ss42.NV} Let $\fn\lhd A$ be a non-zero ideal of degree $d$. 
By Theorem \ref{thmRDD}, $\G_0(\fn)\bs \sT$ is a Ramanujan diagram. In particular, 
$\G_0(\fn)\bs \sT$ is a union of a finite graph $\cG$, and a finite number of cusps. 
As follows from the proof of Theorem \ref{thmRDD}, 
one can take $\cG$ to be the subgraph formed by vertices of type $\leq d-1$. On the 
other hand, this is not the most natural choice of $\cG$. Assume $\deg(\fn)\geq 3$, so that 
$H_1(\G_0(\fn)\bs \sT, \Z)\neq 0$. We choose $\cG$ to be the smallest subgraph of $\G_0(\fn)\bs \sT$ 
such that each cusp is attached to $\cG$ at a vertex which is adjacent in $\G_0(\fn)\bs \sT$ to at least $3$ vertices. 
It is easy to see that this finite graph $\cG$ is uniquely determined, and we denote it by $\cG_0(\fn)$. 

We want to apply Theorem \ref{thmMF} to $\cG_0(\fn)$. To do this, we need to compute the 
number of vertices and edges in $\cG_0(\fn)$, along with their weights. A large portion of 
this calculation is already contained in \cite{GN}, where one finds the number of vertices and edges of 
type $0$ and $d-1$.  

It is easy to see that 
\begin{align*}
\G/\G_0(\fn) & \overset{\sim}{\To} \p:=\p^1(A/\fn) \\ 
\begin{pmatrix} a & b \\ c & d\end{pmatrix} &\mapsto (a:c)
\end{align*}
as $\G$-sets, where the action of $\G$ on $\p$ is $\begin{pmatrix} a & b \\ c & d\end{pmatrix} (u:v)=(au+bv:cu+dv)$. 
Computing the number of vertices of type $i$ and their weights amounts to computing 
the orbits and stabilizers of the action of $G_i$ on $\p$. Similarly, computing 
the number of edges of type $i$ and their weights amounts to 
computing the orbits and stabilizers of the action of $G_i\cap G_{i+1}$ on $\p$. 
Since $G_i\cap G_{i+1}=G_i$ for $i\geq 1$, these two problems are the same for type $\geq 1$. 
For type $0$ one needs to consider the action on $\p$ of both $G_0$ and the group of upper-triangular matrices 
$B=G_0\cap G_1$ in $\GL_2(\F_q)$. 
The formulas become more and more complicated as the number of divisors of $\fn$ increases, so,  
for simplicity, from now on we assume $\fn=\fp$ is prime. Let 
$$
\kappa(\fp)=\begin{cases}
1 & \text{if $\deg(\fp)$ is even};\\
0 & \text{otherwise}. 
\end{cases}
$$
\begin{lem} There is one vertex $v_{\infty, 0}\in V_0$ of weight $q(q-1)$.  
There is one vertex $v'_0\in V_0$ of weight $q+1$ if $\kappa(\fp)=1$. The number of 
remaining vertices of type $0$ is 
$$
|V_0|-1-\kappa(\fp)=\frac{(q^{d-1}-1)-\kappa(\fp)\cdot (q-1)}{q^2-1},
$$
and they all have weight $1$. 
\end{lem}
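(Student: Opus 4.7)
The plan is to parametrize type-$0$ vertices by $\GL_2(\F_q)$-orbits on $\p^1(A/\fp) = \p^1(\F_{q^d})$ and analyze these orbits according to the degree of a coordinate over $\F_q$. Explicitly, the excerpt has already established the bijection $V_0 \cong G_0\bs \GL_2(A)/\G_0(\fp)$, and by sending a matrix to its first column one obtains a $\GL_2(A)$-equivariant identification $\GL_2(A)/\G_0(\fp) \cong \p^1(\F_{q^d})$. Since $G_0 = \GL_2(\F_q)$ and $Z(\F_q)\cap \G_0(\fp) = Z(\F_q)$ has order $q-1$, the weight of the vertex associated to an orbit of $P \in \p^1(\F_{q^d})$ equals $|G_0^P|/(q-1)$, where $G_0^P$ is the stabilizer of $P$ in $\GL_2(\F_q)$.

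Every $P \in \p^1(\F_{q^d})$ admits a representative $(x:1)$ (or $(1:0)$) with $x \in \F_{q^d}$, and the key invariant is the degree $k = [\F_q(x):\F_q]$, which must divide $d$. I will separate three cases:
\begin{itemize}
\item[(i)] $k=1$: these are the $q+1$ points of $\p^1(\F_q)$, forming a single orbit whose stabilizer at $(1:0)$ is the upper-triangular Borel of order $q(q-1)^2$. Thus this orbit gives one vertex $v_{\infty,0}$ of weight $q(q-1)$.
\item[(ii)] $k=2$: such points exist precisely when $\kappa(\fp)=1$. Writing $x^2+\alpha x + \beta = 0$ with $\alpha,\beta \in \F_q$ and solving $(ax+b:cx+d)=(x:1)$ yields the stabilizer
\[
G_0^P = \left\{\begin{pmatrix} a & -c\beta \\ c & a+c\alpha\end{pmatrix} : a,c\in \F_q,\ a^2+ac\alpha+c^2\beta \neq 0\right\},
\]
which under $(a,c)\mapsto a-cx$ is identified with $\F_{q^2}^\times$ (the determinant matching the norm $N_{\F_{q^2}/\F_q}$). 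Hence $|G_0^P|=q^2-1$ and the weight is $q+1$. Since the total number of degree-$2$ points is $q^2-q$ and the orbit size $|\GL_2(\F_q)|/(q^2-1) = q(q-1)$ equals $q^2-q$, there is a unique such orbit, giving the claimed $v'_0$.
\item[(iii)] $k\geq 3$: here $1,x,x^2$ are $\F_q$-linearly independent, so the equation $cx^2 + (d-a)x - b = 0$ forces $b=c=0$, $d=a$. Thus $G_0^P = Z(\F_q)$ has order $q-1$, the weight is $1$, and each orbit has size $|\GL_2(\F_q)|/(q-1) = q(q-1)(q+1)$.
\end{itemize}
Counting the remaining points and dividing by this orbit size gives
\[
\frac{q^d + 1 - (q+1) - \kappa(\fp)(q^2 - q)}{q(q-1)(q+1)} = \frac{(q^{d-1}-1) - \kappa(\fp)(q-1)}{q^2-1},
\]
which is the asserted count (this simplification is purely arithmetic).

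The only non-routine step is case (ii): verifying that the stabilizer of a degree-$2$ point is the non-split Cartan subgroup $\F_{q^2}^\times$ and thereby producing a single orbit of the right size. The computations in cases (i) and (iii) are immediate, and the final count is forced by orbit-stabilizer together with $|\p^1(\F_{q^d})|=q^d+1$.
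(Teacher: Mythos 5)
Your proof is correct. The paper itself disposes of this lemma by citing Lemma 2.7 of Gekeler--Nonnengardt \cite{GN}, so there is no in-text argument to compare against; what you have written is precisely the computation that the paper's preceding setup (the identification $\G/\G_0(\fp)\cong\p^1(A/\fp)\cong\p^1(\F_{q^d})$ and the orbit--stabilizer description of vertices of type $i$ and their weights) calls for, and it runs parallel to the paper's own proof of Lemma \ref{lem3.5} for types $i\geq 1$. Your stratification of $\p^1(\F_{q^d})$ by the degree $[\F_q(x):\F_q]$ of a representative, with the three stabilizer computations (Borel of order $q(q-1)^2$ for $\p^1(\F_q)$, the non-split Cartan $\F_{q^2}^\times$ of order $q^2-1$ for degree-$2$ points when $\deg(\fp)$ is even, and $Z(\F_q)$ for degree $\geq 3$ since $1,x,x^2$ are then $\F_q$-independent), is sound, the single-orbit claims follow from the orbit-size counts as you say, and the final arithmetic reduction to $\frac{(q^{d-1}-1)-\kappa(\fp)(q-1)}{q^2-1}$ checks out; so your argument is a legitimate self-contained replacement for the citation to \cite{GN}.
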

\begin{proof}
This follows from Lemma 2.7 in \cite{GN}. 
\end{proof}

\begin{lem}\label{lem3.4}
There are two edges $e_\infty, e_\infty'\in E_0$ with origin $v_{\infty, 0}$. Their weights are $q(q-1)$ and $q-1$, respectively. 
When $\kappa(\fp)=1$, there is a unique edge with origin $v'_0$, and its weight is $1$. Any other vertex in 
$V_0$ is the origin of exactly $q+1$ edges, all of weight $1$.  
\end{lem}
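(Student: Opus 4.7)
The plan is to reduce to a computation of $B$-orbits on $\p^1(A/\fp)$, where $B = G_0 \cap G_1$ is the Borel subgroup of upper-triangular matrices in $\GL_2(\F_q)$. Since $\fp$ is prime, $A/\fp \cong \F_{q^d}$ where $d = \deg(\fp)$, so $\p^1(A/\fp) \cong \p^1(\F_{q^d})$. Under the identification $\G/\G_0(\fp) \cong \p^1(A/\fp)$ recalled in \S\ref{ss42.NV}, type-$0$ edges of $\G_0(\fp)\bs\sT$ correspond to $B$-orbits on this projective line, and the weight of an edge is $|\Stab_B(P)|/|Z(\F_q)\cap\G_0(\fp)| = |\Stab_B(P)|/(q-1)$ for any $P$ in the associated orbit. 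The origin of such an edge is the type-$0$ vertex whose associated $G_0$-orbit contains the given $B$-orbit.

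Next I would analyze the $B$-action explicitly. A matrix $\begin{pmatrix} a & b \\ 0 & d\end{pmatrix}$ sends $(u:v)$ to $(au+bv:dv)$. The point $(1:0)$ is fixed with stabilizer all of $B$, contributing an edge of weight $|B|/(q-1) = q(q-1)$. For $v \neq 0$, normalize to $(u:1)$ with $u \in \F_{q^d}$; the action becomes the affine transformation $u \mapsto tu + s$ with $t = a/d \in \F_q^\times$, $s = b/d \in \F_q$. The affine group acts transitively on $\F_q$, so $\F_q \subset \p^1(A/\fp)$ forms a single $B$-orbit; the stabilizer of $(0:1)$ is the diagonal torus of order $(q-1)^2$, giving weight $q-1$. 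For $u \in \F_{q^d}\setminus\F_q$, the stabilizer equation $(a-d)u + b = 0$ forces $a = d$, $b = 0$ by $\F_q$-linear independence of $1,u$, so the stabilizer is the center $Z(\F_q)$, every such $B$-orbit has size $q(q-1)$, and contributes an edge of weight $1$.

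Finally I would match $B$-orbits to $G_0$-orbits. The two orbits $\{(1:0)\}$ and $\F_q$ together constitute $\p^1(\F_q)$, which is the $G_0$-orbit associated with $v_{\infty,0}$; this yields the edges $e_\infty$ and $e_\infty'$ of weights $q(q-1)$ and $q-1$. When $\kappa(\fp) = 1$, the $G_0$-orbit associated with $v_0'$ is $\F_{q^2}\setminus\F_q$, of cardinality $q(q-1)$, which equals the size of a single $B$-orbit; hence $v_0'$ is the origin of exactly one edge, of weight $1$. For any other type-$0$ vertex $v$, the preceding lemma gives $w(v) = 1$, and condition (iii) of Definition \ref{defnDiagram} applied at $v$ forces $\sum_{o(e)=v} w(v)/w(e) = q+1$; since every edge leaving $v$ has weight $1$ by the orbit calculation above, there are exactly $q+1$ such edges. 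The only nontrivial point is the bookkeeping that identifies which $B$-orbits lie inside each $G_0$-orbit; the orbit--stabilizer computations themselves are completely elementary, with the key input being that $1$ and $u$ are linearly independent over $\F_q$ precisely when $u \notin \F_q$.
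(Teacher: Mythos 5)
Your proposal is correct. Note that the paper itself does not carry out this computation at all: its proof of the lemma is a citation of \cite[Lemma 2.8]{GN}, so your argument replaces that reference by a direct orbit--stabilizer computation for $B=G_0\cap G_1$ acting on $\p^1(\F_{q^d})$, in exactly the same spirit as the paper's own proof of Lemma \ref{lem3.5} for the groups $G_i$, $i\geq 1$. The computation checks out: the weight of a type-$0$ edge is $|\Stab_B(P)|/(q-1)$; the stabilizer of $(1:0)$ is all of $B$, giving weight $q(q-1)$; the stabilizer of $(0:1)$ is the diagonal torus and $\{(u:1):u\in\F_q\}$ is a single affine orbit, giving weight $q-1$; and for $u\notin\F_q$ the relation $(a-d)u+b=0$ forces $a=d$, $b=0$, so the stabilizer is $Z(\F_q)$, giving weight $1$ and orbit size $q(q-1)$. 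Two points you leave implicit deserve a sentence each: (a) the identification of the $G_0$-orbit attached to $v_0'$ with the quadratic points $(u:1)$, $u\in\F_{q^2}\setminus\F_q$, is most cleanly justified by observing that the $G_0$-stabilizer of such a point is the nonsplit torus $\F_{q^2}^\times$ of order $q^2-1$, which gives weight $(q^2-1)/(q-1)=q+1$ and orbit size $q(q-1)$, in agreement with the preceding lemma; (b) your appeal to condition (iii) of Definition \ref{defnDiagram} presupposes Theorem \ref{thmRDD} (proved independently of this lemma, so there is no circularity), but it can be avoided altogether, since a weight-$1$ vertex of type $0$ corresponds to a $G_0$-orbit of size $|G_0|/(q-1)=q(q-1)(q+1)$, which splits into exactly $q+1$ $B$-orbits of size $q(q-1)$, each contributing an edge of weight $1$. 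With these remarks your proof is complete and self-contained, which is arguably a gain over the paper's bare citation.
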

\begin{proof}
This follows from Lemma 2.8 in \cite{GN}. 
\end{proof}

\begin{lem}\label{lem3.5}
Assume $1\leq i\leq d-1$. There is one vertex $v_{\infty, i}\in V_i$ of weight $(q-1)q^{i+1}$, and one 
vertex $v_i'$ of weight $q-1$. The number of remaining vertices of type $i$ is
$$
|V_i|-2=\frac{q^{d-1-i}-1}{q-1},
$$
and they all have weight $1$. 
\end{lem}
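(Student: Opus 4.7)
The plan is to mirror the treatment of $V_0$ in Lemmas~\ref{lem3.4}--\ref{lem3.5} (which follow \cite{GN}) by computing the orbits and stabilizers of $G_i$ acting on $\p^1(A/\fp)$. Via the $\G$-equivariant bijection $\G/\G_0(\fp)\xrightarrow{\sim}\p^1(A/\fp)$, $\gamma\mapsto\gamma\cdot(1:0)$, the identification $V_i\cong G_i\bs\G/\G_0(\fp)$ becomes $V_i\cong G_i\bs\p^1(A/\fp)$. Under this bijection, the vertex $\G_0(\fp)\gamma v_i$ corresponds to the $G_i$-orbit of $y:=\gamma^{-1}(1:0)$, and a direct conjugation argument gives
$$w(\G_0(\fp)\gamma v_i)=|\G_0(\fp)\cap \gamma G_i\gamma^{-1}|/(q-1)=|\Stab_{G_i}(y)|/(q-1).$$

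For $i\geq 1$, $G_i$ consists of matrices $\begin{pmatrix}a&b\\0&d\end{pmatrix}$ with $a,d\in\F_q^\times$ and $\deg b\leq i$, so $|G_i|=(q-1)^2 q^{i+1}$, and it acts by $(u:v)\mapsto(au+bv:dv)$. Three types of orbits arise. First, $(1:0)$ is fixed by every element of $G_i$, giving a singleton orbit with stabilizer $G_i$, hence a vertex of weight $(q-1)q^{i+1}$, namely $v_{\infty,i}$. Second, let $P_i\subset A/\fp$ denote the image of the $\F_q$-subspace of polynomials of degree $\leq i$; since $\deg\fp=d>i$, the reduction map is injective on this subspace and $|P_i|=q^{i+1}$. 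The orbit of $(0:1)$ is $\{(\beta:1):\beta\in P_i\}$, its stabilizer is the diagonal torus of order $(q-1)^2$, so the corresponding vertex has weight $q-1$; this is $v_i'$.

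Third, consider $(u:1)$ for $u\in A/\fp$ with $u\notin P_i$. The orbit is $\{(\alpha u+\beta:1):\alpha\in\F_q^\times,\,\beta\in P_i\}$. If $\alpha u+\beta=\alpha'u+\beta'$ with $\alpha\neq\alpha'$, then $u=(\beta'-\beta)/(\alpha-\alpha')\in P_i$ (since $P_i$ is $\F_q$-stable), contradicting $u\notin P_i$; hence each such orbit has size $(q-1)q^{i+1}$. The stabilizer condition $b=(d-a)u$ forces $d=a$ and $b=0$ by the same linear-independence argument, so the stabilizer is the scalar subgroup $Z(\F_q)$ of order $q-1$, yielding a vertex of weight $1$. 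Since the complement of the first two orbits in $\p^1(A/\fp)$ has cardinality $q^d+1-1-q^{i+1}=q^{i+1}(q^{d-i-1}-1)$, the number of these weight-$1$ orbits is
$$\frac{q^{i+1}(q^{d-i-1}-1)}{(q-1)q^{i+1}}=\frac{q^{d-i-1}-1}{q-1},$$
which gives the asserted count of $|V_i|-2$.

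The only subtle point is the correct bookkeeping of the bijection $V_i\leftrightarrow G_i\bs\p^1(A/\fp)$—specifically that the stabilizer on the $\p^1$ side is computed at $\gamma^{-1}(1:0)$ rather than $\gamma(1:0)$—after which everything reduces to elementary linear algebra in the $\F_q$-vector space $A/\fp$, exploiting that $i<d=\deg\fp$ makes $P_i$ an $\F_q$-subspace of dimension $i+1$.
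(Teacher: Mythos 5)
Your proof is correct and follows essentially the same route as the paper: identifying $V_i$ with $G_i\backslash\p^1(A/\fp)$ and computing the three kinds of orbits and their stabilizers (the fixed point $(1:0)$, the single orbit of points $(u:1)$ with $u$ in the image of polynomials of degree $\leq i$, and the free-modulo-center orbits of the remaining points). Your explicit verification of the double-coset bookkeeping and of the weight formula via $\Stab_{G_i}(\gamma^{-1}(1:0))$ is a point the paper leaves implicit, but the substance of the argument is the same.
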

\begin{proof}
Let $\gamma=\begin{pmatrix} a & b \\ 0 & d\end{pmatrix}\in G_i$. Then $\gamma (1:0)=(1:0)$, hence 
$(1:0)$ is fixed by $G_i$. This gives the vertex $v_{\infty, i}$ with weight $|G_i|/(q-1)=(q-1)q^{i+1}$. 
Next, suppose 
$$
\gamma (u:1)=(\frac{ua+b}{d}:1)=(u:1). 
$$
Then $b=(d-a)u$. Note that $u$ is the residue class of a unique polynomial 
of degree $\leq d-1$. 

If $\deg(u)>i$, then $a=d$ and $b=0$ (as $\deg(b)\leq i$). In that case,  
the stabilizer of $(u:1)$ in $G_i$ is $Z(\F_q)\cong \F_q^\times$, and the orbit of $(u:1)$ has length $|G_i|/(q-1)$. 
Note that all elements in $G_i(u:1)$ are of the form $(u':1)$ with $\deg(u')=\deg(u)$. 
Hence the $q^{d}-q^{i+1}$ points $(u:1)\in \p$ with $\deg(u)>i$ give $(q^{d}-q^{i+1})/q^{i+1}(q-1)$ vertices 
of type $i$ and weight $1$. 

If $\deg(u)\leq i$, then $a,d\in \F_q^\times$ can be arbitrary, 
so the stabilizer of $(u:1)$ in $G_i$ is isomorphic to $\F_q^\times\times \F_q^\times$. The length 
of the orbits of $(u:1)$ is $|G_i|/(q-1)^2=q^{i+1}$. But there are exactly $q^{i+1}$ points $(u:1)\in \p$ 
with $\deg(u)\leq i$, so they are all in one orbit. This gives one vertex of type $i$ and weight $q-1$.  
\end{proof}

As follows from the previous proof, the vertices $v_{\infty, i}$ ($i\geq 0$) all come from $(1:0)\in \p$, 
so $v_{\infty, i}$ is adjacent to $v_{\infty, i+1}$. Moreover, it is easy to see that each $v_{\infty, i}$ 
is adjacent to exactly two vertices in $\G_0(\fp)\bs \sT$, so $\{v_{\infty, i}\}_{i\geq 0}$ form a cusp. The weight 
of any other vertex $v\in V_i(\G_0(\fp)\bs \sT)$, $1\leq i\leq d-1$, is $1$ or $q-1$. This implies that 
$v$ is adjacent to at least $3$ other vertices.  
\begin{defn}\label{defnG(p)}
Let $\cG_0(\fp)$ be the subgraph of $\G_0(\fp)\bs \sT$ 
formed by all vertices of type $\leq d-1$, excluding $v_{\infty, 0}, v_{\infty, 1}, \dots, v_{\infty, d-1}$. 
Note that $e_\infty'$ connects $v_{\infty, 0}$ to $\cG_0(\fp)$. (It is easy to see from previous  
discussions that $\cG_0(\fp)$ is the graph from the first paragraph of $\S$\ref{ss42.NV}.) 
\end{defn}

One easily computes from previous lemmas that 
\begin{align}
|V(\cG_0(\fp))| &=\frac{2q(q^{d-1}-1)}{(q-1)^2(q+1)}+\frac{(q-2)(d-1)}{q-1}+\frac{\kappa(\fp)q}{q+1} \\ \nonumber &=c_1 q^d +c_2 d+c_3, 
\end{align}
\begin{align}\label{align4.5}
\sum_{v\in V(\cG_0(\fp))} w(v)^{-1} & =|V(\cG_0(\fp))|-\frac{\kappa(\fp) q}{q+1}-1+\frac{(d-1)(3-2q)}{q-1}\\ \nonumber &=c_1 q^d +c_2' d+c_3',
\end{align}
\begin{equation}\label{align4.6}
\prod_{v\in V(\cG_0(\fp))}w(v) \left( \prod_{v\in E^+(\cG_0(\fp))}w(e)\right)^{-1}=(q-1)(q+1)^{\kappa(\fp)}=c_3'', 
\end{equation}
where $c_1, c_2, c_2', c_3'$ depend only $q$, and $c_3, c_3''$ depend on $q$ and the parity of $d$.

\begin{rem}\label{remDeg3}
The diagram in Example \ref{exampled3} is $\G_0(\fp)\bs\sT$ for $d=3$. 
\end{rem}

%-----------------------------------------------------------------

\subsection{Equidistribution of eigenvalues}\label{ssEE}
Let $\fp\lhd A$ be a prime ideal, and $\cG_0(\fp)$ be the finite part of the graph $\G_0(\fp)\bs \sT$ 
as in Definition \ref{defnG(p)}. With the degree of a vertex of $\cG_0(\fp)$ defined as in Definition \ref{defnDegVer},  
all vertices of $\cG_0(\fp)$, except the two boundary vertices, have degree $q+1$. 
The boundary vertices have degree $q$. (This is true for the boundary vertices of any $\cG_0(\fn)$ and follows from property (viii) of Ramanujan diagram.) 
In the notation of Lemma \ref{lem3.5} the boundary vertices of $\cG_0(\fp)$ are $v_1'$ and $v_{d-1}'$. 

Let $n(\fp)=|V(\cG_0(\fp))|$. 
Let $$0=\gamma_1(\fp)<\gamma_2(\fp)\leq \cdots\leq   \gamma_{n(\fp)}(\fp)$$ be the eigenvalues of $\Delta$ acting on $\cG_0(\fp)$. 
By Theorem \ref{thmMTFA} and Theorem \ref{thmRDD}, we have 
\begin{align*}
\gamma_2(\fp) &\geq q-2\sqrt{q}, \\ 
\gamma_{n(\fp)}(\fp) &\leq 2(q+1). 
\end{align*}
In this subsection we estimate the sum
$$
S(\fp):=\sum_{i=2}^{n(\fp)} \ln(\gamma_i(\fp))  
$$
as $\deg(\fp)\to \infty$. 

To simplify the notation we will sometime omit $\fp$ from notation, so, for example, $n$ in this paragraph is $n(\fp)$. 
Let $\delta$ be the adjacency operator on $\G_0(\fp)\bs \sT$, and $\delta'$ be the adjacency operator 
on $\cG_0(\fp)$. Let $\la_1, \dots, \la_n$ be the eigenvalues of $\delta'$. 
We have 
\begin{equation}\label{eqTrace}
\Delta=D-\delta'=((q+1)I-\delta')+(D-(q+1)I), 
\end{equation}
where $D$ is the diagonal matrix with the degrees of vertices of $\cG_0(\fp)$ on the diagonal (cf. the proof of Theorem \ref{thmMTFA}). 
Denote $\alpha_i:=(q+1)-\la_i$, $1\leq i\leq n$, the eigenvalues of the Hermitian matrix $(q+1)I-\delta'$. Without loss 
of generality, after reindexing, we assume $\alpha_1\leq  \cdots\leq \alpha_n$. By Theorem \ref{thmWeylHilb} and Theorem \ref{thmRDD}, we have
\begin{equation}\label{eqAlphaBound}
(\sqrt{q}-1)^2\leq \alpha_i\leq 2(q+1), \quad (2\leq i\leq n).
\end{equation}
Let $\beta_1\leq \cdots\leq \beta_n$ be the eigenvalues of the Hermitian matrix $D-(q+1)I$. 
Note that $\beta_1=\beta_2=-1$ and $\beta_3=\cdots=\beta_{n}=0$. By the Weyl's inequalities (Theorem \ref{thmWeylI}), 
we have 
$$
\alpha_i-1\leq \gamma_i\leq \alpha_i, \quad (1\leq i\leq n). 
$$
Hence we can write $\alpha_i=\gamma_i+\eps_i$, where $0\leq \eps_i\leq 1$. Taking the trace 
of both sides in \eqref{eqTrace}, we get 
$$
\sum_{i=1}^n \gamma_i = \sum_{i=1}^n \alpha_i +\sum_{i=1}^n \beta_i=\sum_{i=1}^n \alpha_i -2
$$
Thus, $\sum_{i=1}^n \eps_i=2$. This implies 
\begin{equation}\label{eqAlpha}
\sum_{i=2}^n\ln(\alpha_i)=\sum_{i=2}^n\ln(\gamma_i+\eps_i)= S(\fp)+c, 
\end{equation}
where $c$ is a constant which depends on $\fp$, but whose absolute value can be universally bounded 
independently of $\fp$. Thus, it is enough to estimate $\sum_{i=2}^n \ln(\alpha_i)$. 

Let $\{\nu_1, \dots, \nu_{m(\fp)}\}$ be the discrete spectrum of $\delta$. 
By Lemma \ref{lemCuspform},  $$\{\nu_1, \dots, \nu_m\}\subset \{\la_1, \dots, \la_n\}$$ 
and the eigenfunctions corresponding to $\nu_i$ are cusp forms. A formula for the dimension 
of the space spanned by cusp forms in $L_2(\G_0(\fp)\bs \sT)$ is given in \cite[Thm. 5.1]{HLW}. It follows from 
that formula that 
\begin{equation}\label{eqNq}
n(\fp)-m(\fp)\sim 2 \deg(\fp).   
\end{equation}
Since 
$$
n(\fp)\sim \frac{2|\fp|}{(q-1)^2(q+1)},  
$$
most of the eigenvalues of $\delta'$ come from cusp forms, although, as Example \ref{exampled3} demonstrates, 
the spectrum of $\delta'$ contains also values which are not in the spectrum of $\delta$. 
Combined with the bounds \eqref{eqAlphaBound}, this implies 
\begin{equation}\label{eqNu}
\sum_{i=2}^n\ln(\alpha_i) = \sum_{j=1}^m\ln((q+1)-\nu_j) + c' \deg(\fp),
\end{equation}
where $c'$ is a constant which depends on $\fp$, but whose absolute value 
can be universally bounded independently of $\fp$. Now 
we concentrate on estimating 
$$
S_\mathrm{cusp}(\fp):=\sum_{j=1}^m\ln((q+1)-\nu_j). 
$$
The key fact that we will use is the following:

\begin{thm}
Let $q$ be fixed. As $\deg(\fp)\to \infty$, the nontrivial discrete spectra $X_\fp:=\{\nu_1, \dots, \nu_{m(\fp)}\}$ of $\delta$ 
are equidistributed on $\Omega=[-2\sqrt{q}, 2\sqrt{q}]$ with respect to the measure 
$$
\mu_q(x)= \frac{q+1}{2\pi} \frac{\sqrt{4q-x^2}}{(q+1)^2-x^2}dx. 
$$
That is, for any continuous function $f$ on $\Omega$ the following holds:
$$
\lim_{\deg(\fp)\to \infty} \frac{1}{|X_\fp|}\sum_{\nu\in X_\fp} f(\nu) = \int_{\Omega} f(x)\mu_q(x). 
$$
\end{thm}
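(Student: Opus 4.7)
The plan is to apply Weyl's equidistribution criterion. By the Stone--Weierstrass theorem applied to continuous functions on the compact interval $\Omega$, it suffices to verify for each integer $k\geq 0$ that
\[
\frac{1}{|X_\fp|}\sum_{\nu\in X_\fp}\nu^k\ \To\ \int_\Omega x^k\mu_q(x)\qquad \text{as }\deg(\fp)\to\infty.
\]
The measure $\mu_q$ on $[-2\sqrt{q},2\sqrt{q}]$ is the Plancherel measure of the unramified tempered dual of $\PGL_2(\Fi)$, equivalently the Kesten--McKay spectral measure of the $(q+1)$-regular tree $\sT$ at any fixed root. A direct computation, using that $\delta^k$ acts on the free module on $V(\sT)$ by summing over walks of length $k$, shows that its $k$-th moment equals the number $N_k$ of closed walks of length $k$ in $\sT$ beginning at the root. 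So the task reduces to establishing $\sum_{\nu\in X_\fp}\nu^k\sim |X_\fp|\cdot N_k$.

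By Lemma \ref{lemCuspform}, the elements of $X_\fp$ are the eigenvalues of $\delta$ acting on the finite-dimensional subspace $V$ of cusp forms in $L_2(\G_0(\fp)\bs\sT)$, i.e.\ those eigenfunctions supported on finitely many vertices. Lifted to $\sT$, these cusp forms correspond to irreducible cuspidal automorphic representations of $\GL_2(\mathbb{A}_F)$ that are spherical at $\infty$ and have a non-zero $\G_0(\fp)$-fixed vector at the finite places, and $\delta$ acts as the spherical Hecke operator at $\infty$. Hence $\sum_{\nu\in X_\fp}\nu^k=\Tr(\delta^k|_V)$. I would evaluate this trace via the Arthur--Selberg trace formula for $\GL_2$ over $F$, taking the test function to be the characteristic function corresponding to length-$k$ walks at $\infty$ and the unit of the Hecke algebra at each finite place. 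The identity contribution equals $N_k\cdot\mathrm{vol}(\G_0(\fp)\bs\GL_2(\Fi)/Z(\Fi))$; by \eqref{eqVol} and \eqref{eqNq} this volume is asymptotic to $|X_\fp|$ up to a constant depending only on $q$, producing exactly the predicted main term $|X_\fp|\cdot N_k$.

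The principal obstacle is controlling the remaining spectral and geometric terms of the trace formula. The continuous (Eisenstein) contribution drops out once we restrict to the cuspidal subspace, but the elliptic (and unipotent) conjugacy classes contribute orbital integrals that must sum to $o(|X_\fp|)$. At $\infty$ the local orbital integral is bounded using the tempered estimate underlying property (ix) of Theorem \ref{thmRDD}; globally, the relevant elliptic classes are parametrized by $A$-orders in imaginary quadratic extensions of $F$ embedding into $\GL_2(F)$ with conductor compatible with $\fp$, and the function-field Brauer--Siegel theorem shows that the corresponding class numbers grow only polynomially in $\deg(\fp)$, much slower than the exponential growth $|X_\fp|\sim 2|\fp|/((q-1)^2(q+1))$. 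The most delicate step will be producing a bound on each elliptic orbital integral that is uniform in the conjugacy class and, when summed against the class-number count, stays $o(|\fp|)$; this uniformity is the real technical heart of the argument.
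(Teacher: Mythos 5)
Your proposal should first be measured against what the paper actually does: the paper does not prove this theorem in-house — its proof is a one-line citation to Nagoshi (Thm.\ 5.1), whose argument adapts Serre's method for eigenvalues of $T_p$ on $S_2(\Gamma_0(N))$: verify Weyl's criterion by computing moments through an explicit Eichler--Selberg-type trace formula, the identity term giving the main contribution and the remaining terms being negligible compared to the dimension. Your plan is exactly this strategy, and its correct ingredients are worth noting: $\mu_q$ is indeed the spectral (Kesten--McKay/Plancherel) measure of the $(q+1)$-regular tree, its $k$-th moment counts closed walks of length $k$ at a root, the eigenvalues in $X_\fp$ are traces of the spherical Hecke operator at $\infty$ on the cuspidal subspace, and the identity contribution is the volume times the walk count, with the volume comparable to $m(\fp)=|X_\fp|$ by \eqref{eqVol} and \eqref{eqNq}.

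As a proof, however, the proposal has genuine gaps, which you partly acknowledge. First, the trace formula computes a trace over the full automorphic spectrum; one cannot simply ``restrict to the cuspidal subspace.'' The residual (one-dimensional) representations and the continuous Eisenstein contribution sit on the spectral side and must be computed or bounded and shown to be $o(|\fp|)$ — this is precisely the bookkeeping reflected in $n(\fp)-m(\fp)\sim 2\deg(\fp)$, and Serre and Nagoshi handle it explicitly. Second, the control of the elliptic (and hyperbolic/unipotent) terms, which you rightly call the technical heart, is not supplied, and the route you sketch through function-field Brauer--Siegel misplaces where the $\fp$-dependence lives: for fixed $k$, the test function (fixed at $\infty$, unit of the unramified Hecke algebra away from $\fp$) confines the relevant global elliptic classes to a finite set that does not vary with $\fp$; what varies is only the local orbital integral at $\fp$, essentially a fixed-point count on $\p^1(A/\fp)$, which for non-central $\gamma$ is bounded independently of $\fp$, so the whole non-identity contribution is $O_k(1)$ rather than something needing class-number growth estimates. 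Writing this out uniformly in $k$ and $\fp$ is exactly what the explicit trace formula in the cited references delivers; without it, your text is a correct outline of the known proof rather than a proof.
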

\begin{proof}
This is proven in \cite[Thm. 5.1]{Nagoshi}, following the method of Serre \cite{SerreJAMS} 
for cusp forms on the congruence subgroups of $\SL_2(\Z)$. 
\end{proof}

It follows from this theorem that the sets $X_\fp':=\{(q+1)-\nu_1, \dots, (q+1)-\nu_{m(\fp)}\}$ are 
equidistributed on $[(\sqrt{q}-1)^2, (\sqrt{q}+1)^2]$ with respect to the measure 
$$
\mu_q'(x)= \frac{q+1}{2\pi} \frac{\sqrt{4q-((q+1)-x)^2}}{(q+1)^2-((q+1)-x)^2}dx. 
$$

\begin{cor}\label{corCq}
As $\deg(\fp)\to \infty$, we have 
$$
S_\mathrm{cusp}(\fp)\sim m(\fp) C_q,
$$
where 
$$
C_q= \frac{q+1}{2\pi} \int_{(\sqrt{q}-1)^2}^{(\sqrt{q}+1)^2} \frac{\sqrt{4q-((q+1)-x)^2}}{(q+1)^2-((q+1)-x)^2}\ln(x) dx. 
$$
\end{cor}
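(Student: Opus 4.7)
The corollary follows from a direct application of the equidistribution theorem to the test function $f(x) = \ln(x)$. First I would verify that $f$ is admissible: since $q \geq 2$, the lower endpoint $(\sqrt{q}-1)^2$ is strictly positive, so $\ln$ is continuous and bounded on the compact interval $[(\sqrt{q}-1)^2, (\sqrt{q}+1)^2]$. The Ramanujan bound $|\nu_j| \leq 2\sqrt{q}$ (property (ix) in Theorem \ref{thmRDD}) ensures that every element of $X_\fp' = \{(q+1) - \nu_j\}$ actually lies in this interval, so $S_\mathrm{cusp}(\fp)$ is a well-defined finite sum.

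Next, the measure $\mu_q'$ is by construction the pushforward of $\mu_q$ under $y \mapsto (q+1) - y$, so the equidistribution of $X_\fp$ with respect to $\mu_q$ translates directly into equidistribution of $X_\fp'$ with respect to $\mu_q'$. Applied to $f(x) = \ln(x)$, this gives
$$
\lim_{\deg(\fp) \to \infty} \frac{S_\mathrm{cusp}(\fp)}{m(\fp)} \;=\; \int_{(\sqrt{q}-1)^2}^{(\sqrt{q}+1)^2} \ln(x)\, \mu_q'(x) \;=\; C_q.
$$

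To upgrade this convergence of ratios to the asymptotic $S_\mathrm{cusp}(\fp) \sim m(\fp) C_q$ in the sense of the notation introduced in the introduction, I need $m(\fp) \to \infty$ as $\deg(\fp) \to \infty$. This is immediate from the combination of \eqref{eqNq}, which gives $n(\fp) - m(\fp) \sim 2\deg(\fp)$, and the formula $n(\fp) \sim 2|\fp|/((q-1)^2(q+1))$ recorded in $\S$\ref{ss42.NV}, since $|\fp| = q^{\deg(\fp)}$ grows exponentially while $n(\fp) - m(\fp)$ is only linear in $\deg(\fp)$; hence $m(\fp) \sim n(\fp)$ and in particular tends to infinity. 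There is no real obstacle in this argument — it is a clean application of the equidistribution statement on the continuous function $\ln(x)$. The only implicit point worth flagging is that $C_q$ must be nonzero for the symbol $\sim$ to be meaningful; this is not hard to see because the support of $\mu_q'$ is an interval whose midpoint is $q+1 \geq 3$, and $\ln(x)$ is positive and of bounded-away-from-zero size on the bulk of the support, while the interval extends only slightly below $1$, so the positive contribution near $x = q+1$ dominates.
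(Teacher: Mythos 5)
Your proof is correct and follows essentially the same route as the paper, which deduces the corollary by applying the equidistribution theorem to the (continuous, since $(\sqrt{q}-1)^2>0$) test function $\ln(x)$ on the shifted spectra $X_\fp'$ with respect to the pushforward measure $\mu_q'$. Your additional remarks that $m(\fp)\to\infty$ and $C_q\neq 0$ are accurate bookkeeping points (the latter is confirmed by Lemma \ref{lemBobV}), not deviations from the paper's argument.
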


The constant $C_q$ obviously depends only on $q$. 
Some of its approximate values, obtained with the help of computer program \texttt{SageMath}, 
are listed in Table \ref{tableCq}.  
%, and  
%since $|\nu_i|\leq 2\sqrt{q}$ for all $1\leq i\leq m(\fp)$, we have the bounds 
%$$\ln(q+1-2\sqrt{q}) \leq C_q\leq \ln(q+1+2\sqrt{q}).$$
%More extensive numerical calculations suggest that $C_q$ is approximately $\ln(q+\frac{1}{2})$. 
\begin{table}%[h]
\begin{tabular}{ |c | c || c | c |  }
\hline
$q$ & $C_q$ & $q$ & $C_q$ \\
\hline
$2$ &  $0.837$ & $8$ &  $2.135$\\
\hline
$3$ &  $1.216$ & $9$ &  $2.247$\\
\hline
$4$ &  $1.483$ & $11$ &  $2.439$\\
\hline
$5$ &  $1.691$ & $13$ &  $2.601$\\
\hline
$7$ &  $2.008$ & $16$ &  $2.802$\\
\hline
\end{tabular}
\caption{}\label{tableCq}
\end{table} 
We have the following estimate: %The following lemma gives a good estimate of $C_q$: 
\begin{lem}\label{lemBobV}
$$
C_q=\ln\left(q+\frac{1}{2}\right)+O\left(q^{-2}\ln q\right). 
$$
\end{lem}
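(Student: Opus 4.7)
The plan is to derive a closed-form expression for $C_q$ and then Taylor-expand it. First, I would symmetrize by setting $y = (q+1) - x$, turning the integral into
\[
C_q = \frac{q+1}{2\pi}\int_{-2\sqrt{q}}^{2\sqrt{q}} \frac{\sqrt{4q - y^2}}{(q+1)^2 - y^2} \ln((q+1) - y)\, dy,
\]
and then pass to trigonometric form with $y = 2\sqrt{q}\cos\theta$, $\theta\in[0,\pi]$, so that the density becomes $\frac{q+1}{2\pi} \cdot \frac{4q\sin^2\theta}{(q-1)^2 + 4q\sin^2\theta}$.

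The key observation is the factorization $(q+1) - 2\sqrt{q}\cos\theta = |1 - \sqrt{q}\,e^{i\theta}|^2 = q\,|1 - q^{-1/2}e^{-i\theta}|^2$, which, via the standard Taylor series for $\ln|1-z|$ (valid since $q^{-1/2}<1$), yields
\[
\ln\bigl((q+1) - 2\sqrt{q}\cos\theta\bigr) = \ln q - 2\sum_{k \geq 1}\frac{q^{-k/2}\cos(k\theta)}{k}.
\]
Simultaneously, rewriting $(q-1)^2 + 4q\sin^2\theta = q^2+1-2q\cos(2\theta)$ and applying the Poisson-kernel expansion with $r = 1/q$ gives
\[
\frac{1}{(q-1)^2 + 4q\sin^2\theta} = \frac{1}{q^2-1}\Bigl(1 + 2\sum_{n \geq 1}q^{-n}\cos(2n\theta)\Bigr).
\]

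Splitting the density as $1 - (q-1)^2/[(q-1)^2 + 4q\sin^2\theta]$ and multiplying out the two Fourier expansions against $\ln((q+1)-2\sqrt{q}\cos\theta)$, the orthogonality relation $\int_0^\pi \cos(k\theta)\cos(2n\theta)\,d\theta = (\pi/2)\,\delta_{k,2n}$ kills every cross term except those with $k=2n$. Summing $\sum_{j\geq 1}q^{-2j}/j = -\ln(1-q^{-2})$, the miraculous cancellations should collapse everything to
\[
C_q = \ln q - \tfrac{q-1}{2}\ln(1 - q^{-2}).
\]
A routine Taylor expansion then gives $-\tfrac{q-1}{2}\ln(1-q^{-2}) = \tfrac{1}{2q} - \tfrac{1}{2q^2} + O(q^{-3})$, while $\ln(q+\tfrac{1}{2}) = \ln q + \tfrac{1}{2q} - \tfrac{1}{8q^2} + O(q^{-3})$, so $C_q - \ln(q+\tfrac{1}{2}) = O(q^{-2})$, which is in fact stronger than the stated $O(q^{-2}\ln q)$ bound.

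The main obstacle is recognizing the hidden Poisson-kernel structure in the denominator after the trigonometric substitution. Once this is spotted, the entire argument reduces to orthogonality of the trigonometric system plus bookkeeping; without it, one is stuck evaluating a family of awkward rational trigonometric integrals one moment at a time.
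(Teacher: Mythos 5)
Your argument is correct, and it takes a genuinely different route from the paper. The paper's proof (due to Vaughan) substitutes $x=q+1-2\theta\sqrt{q}$, expands $((q+1)^2-4\theta^2q)^{-1}$ and $\ln((q+1)^2-4\theta^2q)$ in powers of $q^{-1}$, and integrates term by term using $\int_0^1\sqrt{1-\theta^2}\,d\theta=\pi/4$ and $\int_0^1\theta^2\sqrt{1-\theta^2}\,d\theta=\pi/16$; it is purely an asymptotic computation and only yields the stated $O(q^{-2}\ln q)$. Your Fourier/Poisson-kernel evaluation actually closes exactly: writing the density as $\frac{q+1}{2\pi}\bigl(1-\frac{(q-1)^2}{q^2+1-2q\cos 2\theta}\bigr)$, the constant part contributes $\frac{q+1}{2}\ln q$ (the $\cos(k\theta)$ terms integrate to zero), while the Poisson-kernel part contributes $-\frac{q-1}{2}\bigl(\ln q+\ln(1-q^{-2})\bigr)$ via the $k=2n$ orthogonality and $\sum_{n\ge1}q^{-2n}/n=-\ln(1-q^{-2})$; the interchange of sum and integral is justified by uniform convergence since $q^{-1/2}<1$. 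So the cancellation you flag as "should collapse" does hold, giving the exact identity $C_q=\ln q-\frac{q-1}{2}\ln(1-q^{-2})$, which indeed reproduces the tabulated values (e.g.\ $C_2\approx0.837$, $C_3\approx1.216$) and, upon expansion, gives $C_q-\ln(q+\tfrac12)=-\tfrac{3}{8}q^{-2}+O(q^{-3})$, strictly sharper than the paper's $O(q^{-2}\ln q)$. In short: the paper buys brevity with a crude expansion; your approach buys a closed form and a better error term at the cost of spotting the hidden Poisson-kernel structure.
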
 
\begin{proof}
Make the substitution $x=q+1-2\theta\sqrt{q}$ and use the symmetry of $\theta$ about $0$ to write 
$$
C_q=\frac{2(q+1)q}{\pi}\int_0^1\frac{\sqrt{1-\theta^2}}{(q+1)^2-4\theta^2 q}\ln((q+1)^2-4\theta^2 q)d\theta. 
$$
If we substitute the expansions %, for $0\leq \theta\leq 1$, 
$$
((q+1)^2-4\theta^2 q)^{-1}=\frac{1}{(q+1)^2}+\frac{4\theta^2 q}{(q+1)^4}+O(q^{-4})
$$
%and 
$$
\ln((q+1)^2-4\theta^2 q) = 2\ln(q+1)-\frac{4\theta^2 q}{(q+1)^2}+O(q^{-2}) 
$$
into the integral, and apply the formulae 
$$
\int_0^1\sqrt{1-\theta^2}d\theta=\frac{\pi}{4}, \qquad \int_0^1\theta^2\sqrt{1-\theta^2}d\theta=\frac{\pi}{16}, 
$$ 
we obtain 
$$
C_q=\frac{q}{q+1}\ln(q+1)+\frac{q^2}{(q+1)^3}(\ln(q+1)-1/2)+O(q^{-2}\ln(q+1)).
$$
The main terms here contribute 
$$
\frac{q^3+3q^2+q}{(q+1)^3}(\ln(q+1/2)+\ln(1+1/(2q+1)))-\frac{q^2}{2(q+1)^3}. 
$$
Expanding in powers of $q^{-2}$ gives the desired estimate.
%$$
%C_q=\ln(q+1/2)+O(q^{-2}\ln q). 
%$$
\end{proof}

\begin{thm}\label{thmfDcG}
$$
\ln(\fD_{\cG_0(\fp), w}) \sim \frac{2 C_q}{(q-1)^2(q+1)}|\fp|. 
$$
%In particular, if $q\geq 5$, then the discriminant $\fD_{\cG_0(\fp), w}$  grows exponentially with $|\fp|$. 
\end{thm}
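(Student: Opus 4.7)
The plan is to apply Theorem \ref{thmMF} to the weighted graph $G=\cG_0(\fp)$ with $n=n(\fp)$ vertices, and then argue that the logarithm of the discriminant is, up to lower-order terms, exactly the sum $S(\fp)$ of the logarithms of the non-zero Laplacian eigenvalues, which is estimated in Corollary \ref{corCq}. More precisely, using the identity $\sum_{v} \prod_{v'\neq v}w(v') = \bigl(\prod_v w(v)\bigr)\bigl(\sum_v w(v)^{-1}\bigr)$, taking the logarithm of the formula in Theorem \ref{thmMF} yields
$$
\ln \fD_{\cG_0(\fp),w} = S(\fp) + \Bigl(\sum_{e\in E^+(\cG_0(\fp))}\ln w(e) - \sum_{v\in V(\cG_0(\fp))}\ln w(v)\Bigr) - \ln\Bigl(\sum_v w(v)^{-1}\Bigr).
$$
By \eqref{align4.6} the first bracketed term equals $-\ln c_3''$, which depends only on $q$ and the parity of $\deg(\fp)$; by \eqref{align4.5} the last term is $O(\deg \fp)$. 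Hence the whole right-hand side is $S(\fp) + O(\deg \fp)$.

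Next, one invokes the machinery already developed in $\S\ref{ssEE}$. Writing $\Delta = ((q+1)I-\delta') + (D-(q+1)I)$ and applying the Weyl inequalities together with \eqref{eqAlphaBound}, the eigenvalues $\alpha_i=(q+1)-\la_i$ of $(q+1)I-\delta'$ differ from the $\gamma_i(\fp)$ by at most $1$, and for $i\geq 2$ both are uniformly bounded below by $(\sqrt{q}-1)^2-1 = q-2\sqrt{q}$; this gives $\sum_{i=2}^{n}\ln\alpha_i = S(\fp) + O(1)$. Splitting $\{\la_1,\dots,\la_n\}$ into the discrete spectrum of $\delta$ (which by Lemma \ref{lemCuspform} is the cusp-form spectrum $\{\nu_1,\dots,\nu_{m(\fp)}\}$) and the remaining $n(\fp)-m(\fp) = O(\deg\fp)$ eigenvalues, together with the uniform bound \eqref{eqAlphaBound}, yields $\sum_{i=2}^{n}\ln\alpha_i = S_{\mathrm{cusp}}(\fp) + O(\deg\fp)$.

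Finally, one combines Corollary \ref{corCq}, which gives $S_{\mathrm{cusp}}(\fp)\sim C_q\, m(\fp)$, with the asymptotic $n(\fp)-m(\fp)\sim 2\deg(\fp)$ from \eqref{eqNq} and the vertex count $n(\fp)\sim \frac{2|\fp|}{(q-1)^2(q+1)}$ from \eqref{align4.5}, to conclude
$$
\ln\fD_{\cG_0(\fp),w} \;=\; S_{\mathrm{cusp}}(\fp) + O(\deg\fp) \;\sim\; C_q\,m(\fp) \;\sim\; \frac{2C_q}{(q-1)^2(q+1)}\,|\fp|,
$$
since $\deg\fp = \log_q|\fp| = o(|\fp|)$. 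The only genuinely non-routine input is Corollary \ref{corCq}, which relies on Nagoshi's equidistribution theorem and thus ultimately on Drinfeld's proof of Ramanujan--Petersson; every other step is the bookkeeping of error terms against the dominant main term of order $|\fp|$, and the main technical care needed is just to check that the various universal constants and lower-order terms indeed stay in the $O(\deg\fp)$ regime so that they are absorbed by the $\sim$.
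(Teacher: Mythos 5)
Your overall route is the same as the paper's: apply Theorem \ref{thmMF} to $\cG_0(\fp)$, absorb the weight factors via \eqref{align4.5} and \eqref{align4.6}, and reduce to the cuspidal sum estimated by Corollary \ref{corCq} together with \eqref{eqNq} and the count $n(\fp)\sim 2|\fp|/((q-1)^2(q+1))$. The place where your own reasoning falls short is the passage from $S(\fp)=\sum_{i\geq 2}\ln\gamma_i(\fp)$ to $\sum_{i\geq 2}\ln\alpha_i$ with error $O(1)$. Knowing only that $0\leq \alpha_i-\gamma_i\leq 1$ and that $\alpha_i,\gamma_i$ are bounded below by a constant gives a per-term bound $\ln\alpha_i-\ln\gamma_i\leq \ln\bigl(1+1/(q-2\sqrt{q})\bigr)$, hence a total error of order $n(\fp)\asymp |\fp|$ --- the same order as the main term, so the asymptotic $\sim$ would not follow from what you wrote. (Moreover, for $q\leq 4$ the claimed lower bound $q-2\sqrt{q}$ is not even positive, so it does not control the logarithms at all.)

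What makes the error $O(1)$ in the paper (equation \eqref{eqAlpha}) is an additional trace input that you omitted: $D-(q+1)I$ has exactly two nonzero diagonal entries, each equal to $-1$ (only the two boundary vertices of $\cG_0(\fp)$ have degree $q$), so taking traces in $\Delta=((q+1)I-\delta')+(D-(q+1)I)$ gives $\sum_i(\alpha_i-\gamma_i)=2$. Combined with a positive lower bound on the relevant eigenvalues, this bounds $\sum_{i\geq 2}(\ln\alpha_i-\ln\gamma_i)$ by a quantity independent of $\fp$; any bound that is $o(|\fp|)$, e.g.\ $O(\deg\fp)$, would suffice, but your argument as stated only yields $O(|\fp|)$. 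Since \eqref{eqAlpha} is established in $\S$\ref{ssEE} before the theorem, you could simply have cited it; as written, however, your justification of that step is a genuine gap. The remaining steps --- the rewriting of Theorem \ref{thmMF} using $\sum_{v}\prod_{v'\neq v}w(v')=\bigl(\prod_v w(v)\bigr)\bigl(\sum_v w(v)^{-1}\bigr)$, the $O(\deg\fp)$ treatment of the non-cuspidal eigenvalues via \eqref{eqAlphaBound}, Lemma \ref{lemCuspform} and \eqref{eqNq}, and the final application of Corollary \ref{corCq} --- match the paper's proof and are correct.
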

\begin{proof} 
Since $n(\fp)\sim m(\fp)\sim \frac{2|\fp|}{(q-1)^2(q+1)}$, combining Corollary \ref{corCq} with equations \eqref{eqAlpha} and 
\eqref{eqNu}, we get 
$$
S(\fp)\sim \frac{2 C_q}{(q-1)^2(q+1)}|\fp|. 
$$
Next, we rewrite the formula in Theorem \ref{thmMF} as 
$$
\fD_{\cG_0(\fp), w} \prod_{v\in V(\cG_0(\fp))}w(v) \left( \prod_{v\in E^+(\cG_0(\fp))}w(e)\right)^{-1} \sum_{v\in V(\cG_0(\fp))} w(v)^{-1} 
= \prod_{i=2}^{n(\fp)}\gamma_i(\fp). 
$$
Taking the logarithm of both sides and using \eqref{align4.5} and \eqref{align4.6}, we get
$$
\ln(\fD_{\cG_0(\fp), w}) \sim S(\fp). 
$$
\end{proof}

%----------------------------------------------------------

\subsection{Drinfeld modular curves: Proofs of main results}\label{ssDMC} Let $\fp\lhd A$ be a prime ideal. 
Denote $\F_\fp:=A/\fp\cong \F_{q^{\deg(\fp)}}$. Let $\F_\infty\cong \F_q$ be the residue field at $\infty$. 

\begin{thm}\label{thmDMC} There is a semi-stable curve $\cX_0(\fp)\to \p^1_{\F_q}$ such that: 
\begin{itemize}
\item[(i)] The generic fibre $\cX_0(\fp)_F$ is isomorphic to $X_0(\fp)$.  
\item[(ii)] $\cX_0(\fp)$ is smooth over $\Spec(A[\fp^{-1}])$.  
\item[(iii)] The dual graph of the special fibre $\cX_0(\fp)_{\F_\fp}$ at $\fp$ consists of two vertices joined by $s(\fp)$ edges, where 
$$
s(\fp)=\begin{cases}
\frac{|\fp|-1}{q^2-1} & \text{ if $\deg(\fp)$ is even},\\
\frac{|\fp|-q}{q^2-1}+1 & \text{ if $\deg(\fp)$ is odd}.
\end{cases}
$$
$($This graph looks like the graph in Example \ref{exBanana}.$)$ 
If $\deg(\fp)$ is even, then all edges have weight $1$. If $\deg(\fp)$ is odd, then one edge 
has weight $q+1$ and all other edges have weight $1$. 
\item[(iv)] The genus $g(\fp)$ of $X_0(\fp)$ is $s(\fp)-1$. 
\item[(v)] The dual graph of the special fibre $\cX_0(\fp)_{\F_\infty}$ at $\infty$ is the weighted graph $\cG_0(\fp)$. 
\end{itemize}
\end{thm}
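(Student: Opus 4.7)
The plan is to assemble the semi-stable model from existing results on Drinfeld modular curves in the literature, gluing the moduli-theoretic picture at finite primes with the rigid-analytic uniformization at $\infty$. Parts (i) and (ii) follow essentially tautologically from the moduli interpretation of $Y_0(\fp)$ recalled in the introduction: the universal Drinfeld module with $\G_0(\fp)$-level structure is defined over $\Spec(A[\fp^{-1}])$, and the moduli problem is regular there, giving a smooth affine scheme. Taking the smooth projective compactification over $\Spec(A[\fp^{-1}])$ and then gluing in suitable proper flat completions at $\fp$ and at $\infty$ (described below) produces the required semi-stable curve $\cX_0(\fp)\to \p^1_{\F_q}$.

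For part (iii), I would appeal to the analysis of the fibre of $X_0(\fp)$ at $\fp$ done in \cite[Thm.~5.3]{PW} and \cite{Uber}: the special fibre at $\fp$ consists of two rational curves (parametrizing ordinary Drinfeld modules together with the kernel of Frobenius or of Verschiebung) meeting transversally at the supersingular points. The number of supersingular points, counted with multiplicity, is given by a mass formula; the edge of weight $q+1$ in the odd-degree case appears because, when $\deg(\fp)$ is odd, $\F_{q^2}\not\hookrightarrow \F_\fp$, forcing a unique supersingular point with extra automorphism group, while for even $\deg(\fp)$ all supersingular points are $\F_\fp$-rational and contribute weight-$1$ edges. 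The combined count matches $s(\fp)$, and a direct check using Example \ref{exBanana} and Theorem \ref{thmGroth} then recovers $|\Phi_{J_0(\fp),\fp}|=N(\fp)$, consistent with the formula quoted in the introduction.

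For (v), I would use Drinfeld's rigid-analytic uniformization $X_0(\fp)^\an_{\C_\infty}\cong \G_0(\fp)\bs\Omega\cup\{\text{cusps}\}$, combined with the fact that the Deligne-Rapoport-type formal model of $\Omega$ over $\cO_\infty$ has special fibre whose dual graph is $\sT$; this is the content of $\S$2 of \cite{GR}. Taking the quotient by $\G_0(\fp)$ and compactifying by contracting each half-line to a single smooth point (the cusp) yields a formal semi-stable model whose dual graph is exactly the finite subgraph $\cG_0(\fp)$ introduced in Definition \ref{defnG(p)}. The identification of the weights \eqref{eq_weightsGn} with the thicknesses $w_x$ of the nodes comes from Raynaud's local analysis: the local ring at a node of $\G\bs\sT$ corresponding to an edge $e$ is a quotient of $\cO_\infty[\![u,v]\!]/(uv-\varpi_\infty^{w(e)})$, where the exponent records the order of the stabilizer $\G_e$ modulo the center. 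Part (iv) then follows because in a semi-stable curve whose components are all rational, the arithmetic genus equals the first Betti number of the dual graph: in (iii) the graph has two vertices and $s(\fp)$ edges, hence Betti number $s(\fp)-1$.

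The main obstacle is the glueing step: producing one global model over $\p^1_{\F_q}$ that is simultaneously semi-stable at $\fp$ and at $\infty$, while matching the algebraic model from the moduli problem at $\fp$ to the formal model obtained by rigid uniformization at $\infty$. The most delicate comparison is that between the stabilizer-theoretic weights \eqref{eq_weightsGn} on $\cG_0(\fp)$ and the local ring-theoretic thicknesses of nodes appearing in Raynaud's formalism (cf.\ Theorem \ref{thmGroth} and Remark \ref{rem15}); this has been carried out in Gekeler's papers \cite{Analytical} and \cite{GR}, and it is their results that make the whole picture coherent. Once this comparison is in hand, everything else is bookkeeping with the vertex and edge counts obtained in $\S$\ref{ss42.NV}.
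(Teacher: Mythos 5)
Your proposal takes essentially the same route as the paper, whose proof is purely by citation: (i)--(ii) are attributed to Drinfeld (with Lehmkuhl's compactification), (iii)--(iv) to Gekeler's analysis of the fibre at $\fp$ in \cite{Uber}, and (v) to the rigid-analytic/formal analysis at $\infty$ carried out in \cite{PapikianAIF} building on \cite{GR} --- exactly the assembly you describe. The details you supply (the unique supersingular point with automorphisms $\F_{q^2}^\times$ producing the thickness-$(q+1)$ node when $\deg(\fp)$ is odd, stabilizer orders as node thicknesses at $\infty$, and the genus as the Betti number of the dual graph at $\fp$) are consistent with those references.
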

\begin{proof}
(i) and (ii) follow from the results in \cite{Drinfeld} (see also \cite[Prop. V.3.5]{Lehmkuhl}); 
(iii) and (iv) follow from \cite[$\S$5]{Uber}; (v) follows from \cite[$\S$4.2]{PapikianAIF}. 
\end{proof}

\begin{proof}[Proof of Theorem \ref{thmMain1}] 
By Theorem \ref{thmGroth} and Theorem \ref{thmDMC} (v), 
$$
|\Phi_{J_0(\fp), \infty}| = \fD_{\cG_0(\fp), w}.$$ The estimate of Theorem \ref{thmMain1}
then follows from Theorem \ref{thmfDcG}. 
\end{proof}

\begin{proof}[Proof of Theorem \ref{thmMain2}] 

Let $\fD_\fp$ be the discriminant of $\G_0(\fp)\bs \sT$ defined in $\S$\ref{ssHCHO}. 
It is easy to see that $\fD_\fp=\fD_{\cG_0(\fp), w}$ since $H_1(\G_0(\fp)\bs \sT, \Z)=H_1(\cG_0(\fp), \Z)$ 
and the discriminants in question depend only on the cycles spanning the homology groups. 
The rank of $\cH_{0}(\fp, \Z)$ is equal to $g(\fp)$; cf. \cite[p. 49]{GR}.  

If the pairing  \eqref{GPairing} is perfect, then the bounds in Theorem \ref{thmMain2}
follow from Theorem \ref{thmfDT}, Theorem \ref{thmPIPest}, and Theorem \ref{thmfDcG}. 
On the other hand, it is easy to see from the proof of Theorem \ref{thmfDT} that 
the discriminant $\fD_{\T(\fp)}$ only increases if the pairing is not perfect.  
\end{proof}

Finally, we explain how to deduce the bounds on the height of the Jacobian $J_0(\fp)$ of $X_0(\fp)$ mentioned 
in the introduction. 

Let $\widetilde{\cX}_0(\fp)\to \cX_0(\fp)$ be the minimal desingularization. As follows from Theorem \ref{thmDMC} 
and Remark \ref{rem15}, the number of singular points $\varrho_\fp$ in the fibre of $\widetilde{\cX}_0(\fp)$ over $\fp$ 
is $s(\fp)$ if $\deg(\fp)$ is even, and $s(\fp)+q$ if $\deg(\fp)$ is odd. Similarly, the 
number of singular points in the fibre of $\widetilde{\cX}_0(\fp)$ over $\infty$ is 
$$
\varrho_\infty=\sum_{e\in E(\cG_0(\fp))^+} w(e). 
$$
By \eqref{eq33}, Lemma  \ref{lem3.4} and Lemma  \ref{lem3.5}, this last sum is equal to 
$$
\kappa(\fp)+(q+1)(|V_0|-1-\kappa(\fp))+\sum_{i=1}^{d-2}(q-1+|V_i|-2),
$$
Hence
$$
\varrho_\fp=g(\fp)+c, \quad \varrho_\infty= \frac{|\fp|}{(q-1)^2}+c'\deg(\fp)+c'',
$$ 
where $c, c', c''$ are constants depending only on $q$ and the parity of $\deg(\fp)$. 

\begin{thm}\label{lastTheorem}
$$
\frac{g(\fp)\deg(\fp)}{12}+o(g(\fp)\deg(\fp))\leq  H(J_0(\fp))\leq \frac{g(\fp)^2\deg(\fp)}{3}+o(g(\fp)^2\deg(\fp))
$$
\end{thm}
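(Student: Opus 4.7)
The plan is to apply Theorem \ref{thmH1} to the minimal desingularization $\pi\colon \widetilde{\cX}_0(\fp)\to \p^1_{\F_q}$, which, being a regular model whose fibres contain no $(-1)$-curves, is the minimal regular model of $X_0(\fp)$ over $\p^1_{\F_q}$. This gives
$$H(J_0(\fp))=\frac{1}{12}\Bigl(\omega_{\widetilde{\cX}_0(\fp)/\p^1}^{\,2}+\varrho_\fp\deg(\fp)+\varrho_\infty\Bigr),$$
and the second and third summands are already made explicit in the paragraph preceding the theorem. Combining those expressions with $g(\fp)\sim|\fp|/(q^2-1)$ from Theorem \ref{thmDMC}(iv) and the exponential growth of $g(\fp)$ in $\deg(\fp)$, one obtains
$$\varrho_\fp\deg(\fp)+\varrho_\infty = g(\fp)\deg(\fp)+\frac{q+1}{q-1}g(\fp)+O(\deg(\fp))\sim g(\fp)\deg(\fp).$$

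For the lower bound, the trivial inequality $\omega_{\widetilde{\cX}_0(\fp)/\p^1}^{\,2}\geq 0$ in Theorem \ref{thmH2} immediately yields
$$H(J_0(\fp))\geq \frac{g(\fp)\deg(\fp)}{12}+o(g(\fp)\deg(\fp)).$$

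For the upper bound, I would invoke the nontrivial inequality in Theorem \ref{thmH2}, applied with $g_C=0$ and with $\theta=\deg(\fp)+1$, since the geometric fibres of $\widetilde{\cX}_0(\fp)\to \p^1_{\F_q}$ that fail to be smooth are the $\deg(\fp)$ fibres lying over $\fp$ together with the fibre over $\infty$. This gives
$$\omega_{\widetilde{\cX}_0(\fp)/\p^1}^{\,2}\leq 8p^e g(\fp)(g(\fp)-1)\cdot\frac{\deg(\fp)-1}{2}\sim 4p^e g(\fp)^2\deg(\fp),$$
and combining with Theorem \ref{thmH1} and the asymptotic for $\varrho_\fp\deg(\fp)+\varrho_\infty$ produces
$$H(J_0(\fp))\leq \frac{p^e}{3}g(\fp)^2\deg(\fp)+o\bigl(g(\fp)^2\deg(\fp)\bigr).$$
The main obstacle is therefore to show that Szpiro's modular inseparable exponent $e$ for the family $\widetilde{\cX}_0(\fp)\to \p^1_{\F_q}$ vanishes, i.e.\ that $p^e=1$. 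I expect this to follow from the moduli interpretation of $X_0(\fp)$: the classifying map from $\p^1_{\F_q}$ to the coarse moduli space of rank-$2$ Drinfeld $A$-modules with $\G_0(\fp)$-level structure is separable in characteristic $p$, forcing $e=0$. Once that is in place, the upper bound reduces to the one stated in the theorem.
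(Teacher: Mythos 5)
Your overall strategy coincides with the paper's: apply Theorem \ref{thmH1} to the minimal regular (semi-stable) model $\widetilde{\cX}_0(\fp)\to\p^1_{\F_q}$, feed in the estimates $\varrho_\fp=g(\fp)+O(1)$ and $\varrho_\infty=|\fp|/(q-1)^2+O(\deg(\fp))$ from the paragraph preceding the theorem, obtain the lower bound from $\omega_{\widetilde{\cX}_0(\fp)/\p^1}^{2}\geq 0$, and the upper bound from the second inequality of Theorem \ref{thmH2} with $g_C=0$ and $\theta=\deg(\fp)+1$. Those computations are correct, and you have correctly isolated the real issue: the bound you get is $\frac{p^e}{3}g(\fp)^2\deg(\fp)+o(g(\fp)^2\deg(\fp))$, so everything hinges on showing $e=0$.

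That last step is a genuine gap in your proposal, and the justification you sketch does not work. Szpiro's modular inseparable exponent is attached to the fibration $\widetilde{\cX}_0(\fp)\to\p^1_{\F_q}$: it measures to what extent this family of genus-$g(\fp)$ curves is obtained by Frobenius pullback from another family over $\p^1_{\F_q}$ (equivalently, the inseparability of the classifying map of this family to the moduli of stable curves of genus $g(\fp)$). There is no ``classifying map from $\p^1_{\F_q}$ to the coarse moduli space of rank-$2$ Drinfeld modules'' in this picture: $\p^1_{\F_q}$ is the base of the fibration, while the total space $\cX_0(\fp)$ is itself (a compactification of) that coarse moduli scheme. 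The moduli interpretation of the points of $X_0(\fp)$ says nothing about whether the family of its reductions over $\p^1_{\F_q}$ descends through Frobenius, so ``I expect this to follow from the moduli interpretation'' is not a proof, and the separability statement you propose is not the relevant one. The paper closes this gap with a nontrivial arithmetic input: if the inseparable exponent were positive, the generic fibre would be a Frobenius conjugate of a curve over $F$, so $J_0(\fp)$ would contain an abelian subvariety that is the Frobenius conjugate of another abelian variety over $F$, contradicting Theorem 1.1 of \cite{PapikianMA}. Without this (or some substitute argument), your upper bound remains weaker than the one stated in the theorem.
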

\begin{proof}
The bounds on the height $H(J_0(\fp))$ follow from Theorems \ref{thmH1}, \ref{thmH2}, 
and the previous estimates on $\varrho_\fp$ and $\varrho_\infty$. 
We only need to show that the inseparable exponent of $\cX_0(\fp)$ is $0$. If this is not the case, then $J_0(\fp)$ 
contains an abelian subvariety which is the Frobenius conjugate of another variety over $F$. This contradicts 
\cite[Thm. 1.1]{PapikianMA}.  
\end{proof}

%------------------------------------------------------------------------------------------------

%\renewcommand{\bibliofont}{\normalsize}
%\bibliographystyle{amsplain}
%\bibliography{laplacian2.bib}

\providecommand{\bysame}{\leavevmode\hbox to3em{\hrulefill}\thinspace}
\providecommand{\MR}{\relax\ifhmode\unskip\space\fi MR }
% \MRhref is called by the amsart/book/proc definition of \MR.
\providecommand{\MRhref}[2]{%
  \href{http://www.ams.org/mathscinet-getitem?mr=#1}{#2}
}
\providecommand{\href}[2]{#2}

\end{document}